\theoremstyle{theorem}
\newtheorem{theorem}{Theorem}
\theoremstyle{corollary}
\newtheorem{corollary}{Corollary}
\theoremstyle{lemma}
\newtheorem{lemma}{Lemma}
\theoremstyle{remark}
\newtheorem*{remark}{Remark}
\newtheorem*{notation}{Notation}
\theoremstyle{definition} %% article, amsart
\newtheorem{definition}{Definition} %% article, amsart
\theoremstyle{proposition} %% article, amsart
\newtheorem{proposition}{Proposition} %% article, amsart
\begin{document}

\title{Hyperoperations in Exponential Fields}

\author{Juan D.~Jaramillo}

\address{
Universidad del Valle Sede Tuluá, Villa Campestre, Calle 43 No.~43-33, Tuluá, Colombia}

\email{juan.jaramillo.salazar@correounivalle.edu.co}
\keywords{Hyperoperations, Commutative algebra}

\begin{abstract}
New sequences of hyperoperations \cite{BE15,ACK28,GO47,TAR69} are presented together with their algebraic properties.  The commutative hyperoperations reported by Bennett \cite{BE15} are defined as a sequence of monoids.  After identifying the semirings along the sequence, the corresponding fields are constructed via inverse completion.
\end{abstract}

\maketitle

\section{Introduction}
Hyperoperations are infinite sequences of binary operations extending recursively the definition of the ordinary addition and multiplication \cite{BE15,ACK28,GO47,TAR69}.  The earliest known report on hyperoperations is due to Bennet \cite{BE15} in 1914, where commutative hyperoperations are defined starting from the multiplicative identity of the exponential function.  More than a decade later, Ackermann \cite{ACK28} introduces a sequence of hyperoperations that extends the definition of the ordinary addition, multiplication and exponentiation.  The latter sequence is here referred as regular hyperoperations, noting that they are noncommutative beyond multiplication.  A numerical system based on regular hyperoperations is introduced by Goodstein \cite{GO47} in 1947: the complete hereditary representation of nonnegative integers.  He also coined the names of tetration, pentation, etc.~to denote the regular hyperoperations after exponentiation, and included the \textit{successor} as the primitive regular hyperoperation.
 
 This study presents an algebraic characterization of Bennett's sequence of commutative hyperoperations and a recursive procedure to create new sequences.  The general goal is to provide a more solid ground to the study of hyperoperations.  The content is relevant for the study of commutative fields.  Possible applications could be found in computer arithmetic, where research remains active in the search for alternatives to the IEEE floating-point to overcome numeric overflow/underflow \cite{SA07,LI18}.  In particular, it provides a formal background to proposals involving generalized exponentiation such as the Elias $\omega$ code \cite{EL75} and the level-index codes \cite{CLE84,CLE88}.  It is also of interest to the subfield of weighted automata in computer science, where semirings play a central role \cite{SA09,DR09}. 
  
 This paper is organized as follows:  The section \ref{sec:semirings} reviews the original sequence of commutative hyperoperations and proves that consecutive hyperoperations form a commutative semiring.  The section \ref{sec:newseq} constructs new sequences of hyperoperations from the insight that each commutative hyperoperation is the initial object of a new sequence of hyperopertations.  In the section \ref{sec:repr} an algorithm to represent nonnegative numbers using commutative hyperoperations is presented.  In the section \ref{sec:fields} the method of inverse completion \cite{WA65} is used to construct a quotient field and its Cauchy completion for each of the former commutative semirings.  The section \ref{sec:emb} closes the paper embedding the former structures in the usual real numbers and makes remarks pointing to future research.

\section{Monoids and Semirings}\label{sec:semirings}
%\textcolor{red}{[Standard Formulation? The axiom of induction is often modified giving rise to different Peano systems, you need to specified which version are you assuming. ALSO INTRO ORDERING. SHOW that ordering in $\mathord{\stackrel{n}{\mathbb{N}}}$ is the same as in ordering in $\mathord{\stackrel{0}{\mathbb{N}}}$]}
\begin{notation}
%%\url{https://en.wikipedia.org/wiki/Peano_axioms}
Denote the set of natural numbers from the Peano axioms as $\mathbb{N}=\{0,1,2,3,...\}$,  together with its successor function $S:\mathbb{N}\rightarrow\mathbb{N}\setminus\{0\}$ and its inverse function $S^{-1}$.
%% \url{https://proofwiki.org/wiki/Definition:Natural_Numbers}
\end{notation}
\begin{definition}\label{defn:hyp}
 The sequence of regular hyperoperations generated from the succesor function in $\mathbb{N}$ is the sequence of binary operations $H_{n}:(\mathbb{N})^{2}\rightarrow\mathbb{N}$ for $n\in\mathbb{N}$, defined recursively as
 \begin{equation}
 H_{n}(a,b)=\left\{\begin{array}{ll}
 S(b),&\text{if}\ n=0;\\ 
 a,&\text{if}\ n=1\ \text{and}\ b=0;\\
 0,&\text{if}\ n=2\ \text{and}\ b=0;\\
 %1=S(0)
 1,&\text{if}\ n\geq 3\ \text{and}\ b=0;\\
 H_{n-1}(a,H_{n}(a,S^{-1}(b))),&\text{otherwise}.
 \end{array}\right.
 \end{equation}
\end{definition}
%\begin{remark}
%The tuple $(\mathbb{N},H_{1},H_{2})$ is a commutative semiring with additive identity $0$ and multiplicative identity $1$.  See the Proposition~\ref{prop:Hsemiring}.
%\end{remark}
\begin{proposition}\label{prop:Hsemiring}
The tuple $(\mathbb{N},H_{1},H_{2})$ is a commutative semiring.
\end{proposition}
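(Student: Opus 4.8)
The plan is to reduce everything to induction on the Peano structure of $\mathbb{N}$, using the two defining recursions obtained by unfolding Definition~\ref{defn:hyp}: for all $a,b\in\mathbb{N}$ one has $H_1(a,0)=a$ and $H_1(a,S(b))=S(H_1(a,b))$, and likewise $H_2(a,0)=0$ and $H_2(a,S(b))=H_1(a,H_2(a,b))$. Thus $H_1$ and $H_2$ are the usual addition and multiplication of $\mathbb{N}$ written in recursion-on-the-second-argument form, and the task is to verify the semiring axioms from these equations alone. The subtlety is purely one of bookkeeping: since the recursions only ``see'' the second argument, the left-hand unit and successor laws must be proved before commutativity, and commutativity before associativity and the second distributive law.

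First I would treat the additive monoid $(\mathbb{N},H_1,0)$. By induction on $b$: $H_1(0,b)=b$ (left identity) and $H_1(S(a),b)=S(H_1(a,b))$ (left successor law). Combining these two facts, another induction on $b$ gives commutativity $H_1(a,b)=H_1(b,a)$, and a further induction on $c$ gives associativity $H_1(H_1(a,b),c)=H_1(a,H_1(b,c))$; the base cases are the definitional equations and the inductive steps are immediate rewritings with $S$. This settles that $(\mathbb{N},H_1,0)$ is a commutative monoid.

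Next, for the multiplicative monoid $(\mathbb{N},H_2,1)$: note $H_2(a,1)=H_1(a,H_2(a,0))=H_1(a,0)=a$, so $1$ is a right unit, and an induction on $b$ using $H_1(1,b)=S(b)$ (itself a consequence of the commutativity of $H_1$) gives $H_2(1,b)=b$; similarly $H_2(0,b)=0$ by induction on $b$. The key auxiliary step is the \emph{left successor law for $H_2$}, namely $H_2(S(a),b)=H_1(H_2(a,b),b)$, proved by induction on $b$ and using the commutativity and associativity of $H_1$ already established; this is the one identity whose verification is not a one-line rewrite. With it in hand, commutativity $H_2(a,b)=H_2(b,a)$ follows by induction on $b$, the inductive step being $H_2(a,S(b))=H_1(a,H_2(b,a))=H_1(H_2(b,a),a)=H_2(S(b),a)$. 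For the left distributive law $H_2(a,H_1(b,c))=H_1(H_2(a,b),H_2(a,c))$ I would induct on $c$, again leaning on commutativity and associativity of $H_1$; associativity of $H_2$, $H_2(H_2(a,b),c)=H_2(a,H_2(b,c))$, then follows by induction on $c$ using this distributive law.

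Finally, the right distributive law is obtained from the left one together with commutativity of $H_2$, and the absorption identities $H_2(a,0)=H_2(0,a)=0$ are the definitional equation and the fact proved above. Since $S(0)=1\neq 0$ by the Peano axioms, the additive and multiplicative identities are distinct, and collecting these facts shows $(\mathbb{N},H_1,H_2)$ is a commutative semiring. I expect the only real obstacle to be organizing the induction order correctly --- in particular isolating and proving the left successor law for $H_2$ before commutativity --- since every individual step is a short computation with $S$, $H_1$, and $H_2$.
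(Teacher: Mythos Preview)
Your proposal is correct and follows the classical Landau-style development of arithmetic from the Peano axioms: extract the two primitive recursions $H_1(a,S(b))=S(H_1(a,b))$ and $H_2(a,S(b))=H_1(a,H_2(a,b))$, then prove each axiom by a carefully ordered induction on the second argument, with the left successor laws $H_1(S(a),b)=S(H_1(a,b))$ and $H_2(S(a),b)=H_1(H_2(a,b),b)$ serving as the key auxiliary lemmas before commutativity. The paper's proof reaches the same conclusion but is organised differently: for the additive axioms and for left distributivity it does not set up formal inductions at all, but instead unfolds the recursion into an explicit ``$k$ copies of $H_0$'' (respectively ``$k$ copies of $H_1$'') iterated expression and manipulates that directly; formal induction is reserved only for right distributivity, commutativity of $H_2$, and associativity of $H_2$. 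In particular the paper never isolates your left successor law for $H_2$ --- it instead proves $H_2(a,H_1(b,c))=H_1(H_2(a,b),H_2(a,c))$ \emph{before} commutativity of $H_2$, by an unfolding argument, and then uses that to get commutativity and associativity of $H_2$ inductively. Your route is tidier and more uniform (every step is a one-variable induction with a clean inductive hypothesis), while the paper's unfolding style makes the connection to the iterated-successor interpretation of $H_1$ and $H_2$ more visually explicit; either is a complete proof.
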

\begin{proof}
Let $a,b,c\in\mathbb{N}$.
\begin{enumerate}
\item\textit{Closure of addition.}  By definition $H_{1}(a,0)=a\in\mathbb{N}$.  For $b>0$ and noting that $H_{0}$ is closed in $\mathbb{N}$, one obtains %Definition~\ref{defn:hyp}
\begin{subequations}
\begin{align}
H_{1}(a,b)&=H_{0}(a,H_{1}(a,b-1))\\
&=H_{0}(a,H_{0}(a,H_{1}(a,b-2)))\\
&\vdots\nonumber\\ %&\shortvdotswithin{=}
&=\underbrace{H_{0}(a,H_{0}(a,\ldots H_{0}(a,H_{1}(a,0))\ldots))}_{b\ \text{copies of}\ H_{0}}\\
&=\underbrace{H_{0}(a,H_{0}(a,\ldots H_{0}(a,H_{0}(a,a))\ldots))}_{b\ \text{copies of}\ H_{0}}\in\mathbb{N}.
\end{align}
\end{subequations}
%% COMMENTED because magma is for binary ops not just unary.
%The last step follows from the fact that the tuple $(\mathbb{N},H_{0})$ is a \textit{magma}, i.e., 
%The last step follows from the fact that if $a,b\in\mathbb{N}$, then $H_{0}(a,b)\in\mathbb{N}$.
\item\textit{Additive identity.} %From Definition~\ref{defn:hyp} one obtains that
By definition $H_{1}(a,0)=a$.  On the other hand,
\begin{subequations}
\begin{align}
H_{1}(0,a)&=\underbrace{H_{0}(0,H_{0}(0,\ldots H_{0}(0,H_{0}(0,0))\ldots))}_{a\ \text{copies of}\ H_{0}}\\
&=\underbrace{H_{0}(0,H_{0}(0,\ldots H_{0}(0,H_{0}(0,1))\ldots))}_{a-1\ \text{copies of}\ H_{0}}\\
&=\underbrace{H_{0}(0,H_{0}(0,\ldots H_{0}(0,H_{0}(0,2))\ldots))}_{a-2\ \text{copies of}\ H_{0}}\\
%&=\underbrace{H_{0}(0,H_{0}(0,\ldots H_{0}(0,H_{0}(0,3))\ldots))}_{a-3\ \text{copies of}\ H_{0}}\\
&\vdots\nonumber\\%&\shortvdotswithin{=}
&=H_{0}(0,a-1)\\
&=a.
\end{align}
\end{subequations}
\item\textit{Commutativity of addition.}   Let $a<b$, then
\begin{subequations}
\begin{align}
H_{1}(a,b)&=\underbrace{H_{0}(a,H_{0}(a,\ldots H_{0}(a,H_{0}(a,a))\ldots))}_{b\ \text{copies of}\ H_{0}}\\
&=\underbrace{H_{0}(a,H_{0}(a,\ldots H_{0}(a,H_{0}(a,H_{1}(1,a))\ldots)))}_{b-1\ \text{copies of}\ H_{0}}\\
&=\underbrace{H_{0}(a,H_{0}(a,\ldots H_{0}(a,H_{1}(1,H_{1}(1,a)))\ldots))}_{b-2\ \text{copies of}\ H_{0}}\\
&\vdots\nonumber\\%&\shortvdotswithin{=}
&=\underbrace{H_{0}(a,H_{0}(a,\ldots H_{0}(a,H_{0}(a,H_{0}(a,b-1)))\ldots))}_{a+1\ \text{copies of}\ H_{0}}\\
&=\underbrace{H_{0}(a,H_{0}(a,\ldots H_{0}(a,H_{0}(a,b))\ldots))}_{a\ \text{copies of}\ H_{0}}\\
%% Step commented below is valid but unnecessary.
%&=\underbrace{H_{0}(b,H_{0}(b,\ldots H_{0}(b,H_{0}(b,b))\ldots))}_{a\ \text{copies of}\ H_{0}}\\
&=H_{1}(b,a).
\end{align}
\end{subequations}
\item\textit{Associativty of addition.}
\begin{subequations}
\begin{align}
H_{1}(a,H_{1}(b,c))&=\underbrace{H_{0}(a,H_{0}(a,\ldots H_{0}(a,H_{0}(a,H_{0}(a,a)))\ldots))}_{H_{1}(b,c)\ \text{copies of}\ H_{0}}\\
&=\underbrace{H_{0}(a,H_{0}(a,\ldots H_{0}(a,H_{0}(a,H_{1}(1,a))\ldots))}_{H_{1}(b,c)-1\ \text{copies of}\ a}\\
&\vdots\nonumber\\%&\shortvdotswithin{=}
&=\underbrace{H_{0}(a,H_{0}(a,\ldots H_{0}(a,H_{0}(a,H_{1}(a,b)-1))\ldots))}_{c+1\ \text{copies of}\ H_{0}}\\
&=\underbrace{H_{0}(a,H_{0}(a,\ldots H_{0}(a,H_{0}(a,H_{1}(a,b)))\ldots))}_{c\ \text{copies of}\ H_{0}}\\
%% Step commented below is valid but unnecessary.
%&=\underbrace{H_{0}(H_{1}(a,b),\ldots H_{0}(H_{1}(a,b),H_{0}(H_{1}(a,b),H_{1}(a,b)))\ldots))}_{c-1\ \text{copies of}\ H_{0}}\\
&=H_{1}(H_{1}(a,b),c).
\end{align}
\end{subequations}
\item\textit{Closure of multiplication.}
\begin{align}
H_{2}(a,b)=\underbrace{H_{1}(a,H_{1}(\ldots H_{1}(a,H_{1}(a,a))\ldots))}_{b\ \text{copies of}\ a}\in\mathbb{N}.
\end{align}
\item\textit{Multiplicative identity.}
\begin{subequations}
\begin{align}
H_{2}(a,1)&=H_{1}(a,H_{2}(a,0))\\
&=H_{1}(a,0)\\
&=a
\end{align}
\end{subequations}
and
\begin{subequations}
\begin{align}
H_{2}(1,a)&=\underbrace{H_{1}(1,H_{1}(\ldots H_{1}(1,H_{1}(1,1))\ldots))}_{a-1\ \text{copies of}\ H_{1}}\\
&=\underbrace{H_{1}(1,H_{1}(\ldots H_{1}(1,H_{1}(1,2))\ldots))}_{a-2\ \text{copies of}\ H_{1}}\\
&=\underbrace{H_{1}(1,H_{1}(\ldots H_{1}(1,H_{1}(1,3))\ldots))}_{a-3\ \text{copies of}\ H_{1}}\\
&\vdots\nonumber\\%&\shortvdotswithin{=}
&=H_{1}(1,a-1)\\
&=a.
\end{align}
\end{subequations}
\item \textit{Distributivity.}  Let $H_{1}(b,c)>0$, then
%% Remains unclear how to prove case $b=-c$ in the context of groups and fields.
\begin{subequations}
\begin{align}
H_{2}(a,H_{1}(b,c))&=\underbrace{H_{1}(a,H_{1}(a,\ldots H_{1}(a,H_{1}(a,a))\ldots))}_{H_{1}(b,c)-1\ \text{copies of}\ H_{1}}\\
&=\underbrace{H_{1}(a,H_{1}(a,\ldots H_{1}(a,H_{2}(a,2))\ldots))}_{H_{1}(b,c)-2\ \text{copies of}\ H_{1}}\\
&=\underbrace{H_{1}(a,H_{1}(a,\ldots H_{1}(a,H_{2}(a,3))\ldots))}_{H_{1}(b,c)-3\ \text{copies of}\ H_{1}}\\
&\vdots\nonumber\\%&\shortvdotswithin{=}
&=\underbrace{H_{1}(a,H_{1}(a,\ldots H_{1}(a,H_{1}(a,H_{2}(a,c)))\ldots))}_{b\ \text{copies of}\ H_{1}}\label{semif:aso:in}\\
&=\underbrace{H_{1}(a,H_{1}(a,\ldots H_{1}(a,H_{1}(H_{1}(a,a),H_{2}(a,c)))\ldots))}_{b\ \text{copies of}\ H_{1}}\label{semif:aso:out}\\
&=\underbrace{H_{1}(a,H_{1}(a,\ldots H_{1}(a,H_{1}(H_{2}(a,2),H_{2}(a,c)))\ldots))}_{b-1\ \text{copies of}\ H_{1}}\\
&=\underbrace{H_{1}(a,\ldots H_{1}(a,H_{1}(H_{1}(a,H_{2}(a,2)),H_{2}(a,c)))\ldots))}_{b-1\ \text{copies of}\ H_{1}}\\
&=\underbrace{H_{1}(a,\ldots H_{1}(a,H_{1}(H_{1}(a,H_{2}(a,3)),H_{2}(a,c)))\ldots))}_{b-2\ \text{copies of}\ H_{1}}\\
&\vdots\nonumber\\%&\shortvdotswithin{=}
&=H_{1}(H_{1}(a,H_{2}(a,b-1)),H_{2}(a,c))\\
&=H_{1}(H_{2}(a,b),H_{2}(a,c)).
\end{align}
\end{subequations}
% Where associativity of $H_{1}$ is used from \ref{semif:aso:in} to \ref{semif:aso:out}.
To prove $H_{2}(H_{1}(b,c),a)=H_{1}(H_{2}(b,a),H_{2}(c,a))$, one can proceed by induction. Note that
\begin{subequations}
\begin{align}
H_{2}(H_{1}(b,c),1)&=H_{2}(1,H_{1}(b,c))\\
&=H_{1}(H_{2}(1,b),H_{2}(1,c))\\
&=H_{1}(H_{2}(b,1),H_{2}(c,1)).
\end{align}
\end{subequations}
Assume $H_{2}(H_{1}(b,c),a)=H_{1}(H_{2}(b,a),H_{2}(c,a))$, then
\begin{subequations}
\begin{align}
H_{2}(H_{1}(b,c),H_{1}(a,1))&=H_{1}(H_{2}(H_{1}(b,c),a),H_{2}(H_{1}(b,c),1))\\
&=H_{1}(H_{2}(H_{1}(b,c),a),H_{1}(b,c))\\
&=H_{1}(H_{1}(H_{2}(b,a),H_{2}(c,a)),H_{1}(b,c))\\
&=H_{1}(H_{1}(H_{2}(b,a),b),H_{1}(H_{2}(c,a),c))\\
&=H_{1}(H_{1}(H_{2}(b,a),H_{2}(b,1)),H_{1}(H_{2}(c,a),H_{2}(c,1)))\\
&=H_{1}(H_{2}(b,H_{1}(a,1)),H_{2}(c,H_{1}(a,1))).
\end{align}
\end{subequations}
The case $H_{1}(b,c)=0$ is straightforward since it implies $b=c=0$.
\item\textit{Commutativity of multiplication.} By induction. It is known that $H_{2}(a,1)=H_{2}(1,a)$.  Assume $H_{2}(a,b)=H_{2}(b,a)$, then
\begin{subequations}
\begin{align}
H_{2}(a,H_{1}(1,b))&=H_{1}(H_{2}(a,1),H_{2}(a,b))\\
&=H_{1}(H_{2}(1,a),H_{2}(b,a))\\
&=H_{2}(H_{1}(1,b),a).
\end{align}
\end{subequations}
\item\textit{Associativty of multiplication.}  By induction. Note that
\begin{subequations}
\begin{align}
H_{2}(H_{2}(a,b),0)&=0\\
&=H_{2}(a,0)\\
&=H_{2}(a,H_{2}(b,0)).
\end{align}
\end{subequations}
Assume $H_{2}(H_{2}(a,b),c)=H_{2}(a,H_{2}(b,c))$, then
\begin{subequations}
\begin{align}
H_{2}(H_{2}(a,b),H_{1}(1,c))&=H_{1}(H_{2}(H_{2}(a,b),1),H_{2}(H_{2}(a,b),c))\\
&=H_{1}(H_{2}(a,b),H_{2}(H_{2}(a,b),c))\\
&=H_{1}(H_{2}(a,b),H_{2}(a,H_{2}(b,c)))\\
&=H_{2}(a,H_{1}(b,H_{2}(b,c)))\\
&=H_{2}(a,H_{2}(b,H_{1}(1,c))).
\end{align}
\end{subequations}
\end{enumerate}
\end{proof} 
\begin{definition}\label{defn:exp-int}
Define the exponential function $E:\mathbb{N}\rightarrow\mathbb{N}:E(a)=H_{3}(w_{0},a)$ for a given base $w_{0}\in\mathbb{N}\setminus\{0,1\}$.  Denote by $L$ its inverse function.
\end{definition}
\begin{notation}
%% The range or image of a function is a subset of the codomain.
For $n\in\mathbb{N}$, denote $\mathord{\stackrel{n+1}{\mathbb{N}}}=E(\mathord{\stackrel{n}{\mathbb{N}}})$ with $\mathord{\stackrel{0}{\mathbb{N}}}=\mathbb{N}$.  Denote $\mathord{\stackrel{n}{\mathbb{N}}}=\{\mathord{\stackrel{n}{0}},\mathord{\stackrel{n}{1}},\mathord{\stackrel{n}{2}},\ldots\}$ such that for $\mathord{\stackrel{n+1}{k}}\in\mathord{\stackrel{n+1}{\mathbb{N}}}$ and $\mathord{\stackrel{n}{k}}\in\mathord{\stackrel{n}{\mathbb{N}}}$, one obtains $\mathord{\stackrel{n+1}{k}}=E(\mathord{\stackrel{n}{k}})$. %% I AM HERE
\end{notation}
The following results are only valid for those values of $\mathord{\stackrel{n}{\mathbb{N}}}$ such that $n\in\mathbb{N}$.
\begin{remark}
The function $E$ generates a descending filtration in $\mathbb{N}$, meaning $\mathord{\stackrel{n+1}{\mathbb{N}}}\subset\mathord{\stackrel{n}{\mathbb{N}}}$.
\end{remark}
\begin{proposition}\label{prop:mrule}
%\textit{Multiplicative rule of exponents in $\mathbb{N}$.}  
Let $a,b\in\mathord{\stackrel{1}{\mathbb{N}}}$, then
%%Case a,b\in\mathord{\stackrel{0}{\mathbb{N}}} requires irrationals
\begin{equation}
H_{2}(a,b)=E(H_{1}(L(a),L(b))).
\end{equation}
\end{proposition}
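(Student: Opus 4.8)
The plan is to reduce the statement to the ``law of exponents'' for $H_{3}$, namely the identity
\[
H_{2}(E(x),E(y))=E(H_{1}(x,y))\qquad\text{for all }x,y\in\mathbb{N},
\]
and then substitute $x=L(a)$, $y=L(b)$, using that $L$ is by definition the inverse of $E$, so that $a=E(L(a))$ and $b=E(L(b))$ whenever $a,b\in\mathord{\stackrel{1}{\mathbb{N}}}=E(\mathbb{N})$.

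To establish the displayed identity I would fix $x$ and induct on $y$. For the base case $y=0$ one has $H_{1}(x,0)=x$, while on the left $E(0)=H_{3}(w_{0},0)=1$ by Definition~\ref{defn:hyp} and $H_{2}(E(x),1)=E(x)$ by the multiplicative identity already established in Proposition~\ref{prop:Hsemiring}; hence both sides equal $E(x)$. For the inductive step, unfolding the recursion of Definition~\ref{defn:hyp} gives $E(S(y))=H_{3}(w_{0},S(y))=H_{2}(w_{0},H_{3}(w_{0},y))=H_{2}(w_{0},E(y))$, and likewise $H_{1}(x,S(y))=H_{0}(x,H_{1}(x,y))=S(H_{1}(x,y))$, so that $E(H_{1}(x,S(y)))=H_{2}(w_{0},E(H_{1}(x,y)))$. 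On the other hand, using commutativity and associativity of $H_{2}$ from Proposition~\ref{prop:Hsemiring},
\[
H_{2}(E(x),E(S(y)))=H_{2}(E(x),H_{2}(w_{0},E(y)))=H_{2}(w_{0},H_{2}(E(x),E(y))),
\]
and applying the induction hypothesis $H_{2}(E(x),E(y))=E(H_{1}(x,y))$ turns the right-hand side into $H_{2}(w_{0},E(H_{1}(x,y)))$, which matches $E(H_{1}(x,S(y)))$. This closes the induction.

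Finally, given $a,b\in\mathord{\stackrel{1}{\mathbb{N}}}$, write $a=E(L(a))$ and $b=E(L(b))$ (legitimate since $a,b$ lie in the image of $E$ and $L=E^{-1}$), and plug $x=L(a)$, $y=L(b)$ into the identity to obtain $H_{2}(a,b)=H_{2}(E(L(a)),E(L(b)))=E(H_{1}(L(a),L(b)))$, as required. I do not expect a serious obstacle here: the only points requiring care are the correct unfolding of the recursion defining $H_{3}$ (in particular the base value $H_{3}(w_{0},0)=1$ and the step $H_{3}(w_{0},S(y))=H_{2}(w_{0},H_{3}(w_{0},y))$) and the bookkeeping in the commutative/associative rearrangement of $H_{2}$, both routine once Proposition~\ref{prop:Hsemiring} is available. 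One should also note that $L$ being well defined on $\mathord{\stackrel{1}{\mathbb{N}}}$ relies on the injectivity of $E$, which is why the base $w_{0}$ is taken in $\mathbb{N}\setminus\{0,1\}$ in Definition~\ref{defn:exp-int}.
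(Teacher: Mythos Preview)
Your argument is correct. Both your proof and the paper's establish the same underlying identity---the law of exponents $H_{2}(E(x),E(y))=E(H_{1}(x,y))$---but the organization differs. The paper works directly from the right-hand side $E(H_{1}(L(a),L(b)))=H_{3}(w_{0},H_{1}(L(a),L(b)))$, unrolls $H_{3}$ into a tower of $H_{1}(L(a),L(b))-1$ nested copies of $H_{2}(w_{0},\cdot)$, and then telescopes in two phases: first collapsing $L(a)$ factors of $w_{0}$ into $H_{3}(w_{0},L(a))=a$, then absorbing the remaining $L(b)$ factors into $H_{2}(a,H_{3}(w_{0},L(b)))=H_{2}(a,b)$. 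Your version replaces this informal ellipsis computation with a clean induction on $y$, invoking commutativity and associativity of $H_{2}$ explicitly from Proposition~\ref{prop:Hsemiring}. The content is the same; your packaging is more formally airtight, while the paper's unrolling makes the ``repeated multiplication'' picture more visually explicit.
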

\begin{proof}
Let $a,b\in\mathord{\stackrel{1}{\mathbb{N}}}$, then
\begin{subequations}
\begin{align}
E(H_{1}(L(a),L(b)))&=H_{3}(w_{0},H_{1}(L(a),L(b)))\\
&=\underbrace{H_{2}(w_{0},H_{2}(w_{0},\ldots H_{2}(w_{0},H_{2}(w_{0},w_{0}))\ldots))}_{H_{1}(L(a),L(b))-1\ \text{copies of}\ H_{2}}\\
&=\underbrace{H_{2}(w_{0},H_{2}(w_{0},\ldots H_{2}(w_{0},H_{3}(w_{0},2))\ldots))}_{H_{1}(L(a),L(b))-2\ \text{copies of}\ H_{2}}\\
&=\underbrace{H_{2}(w_{0},H_{2}(w_{0},\ldots H_{2}(w_{0},H_{3}(w_{0},3))\ldots))}_{H_{1}(L(a),L(b))-3\ \text{copies of}\ H_{2}}\\
&\vdots\nonumber\\%&\shortvdotswithin{=}
&=\underbrace{H_{2}(w_{0},H_{2}(w_{0},\ldots H_{2}(w_{0},H_{3}(w_{0},L(a)))\ldots))}_{L(b)\ \text{copies of}\ H_{2}}\\
&=\underbrace{H_{2}(w_{0},H_{2}(w_{0},\ldots H_{2}(w_{0},H_{2}(a,w_{0}))\ldots))}_{L(b)\ \text{copies of}\ H_{2}}\\
&=\underbrace{H_{2}(w_{0},H_{2}(w_{0},\ldots H_{2}(w_{0},H_{2}(a,H_{3}(w_{0},2)))\ldots))}_{L(b)-1\ \text{copies of}\ H_{2}}\\
&=\underbrace{H_{2}(w_{0},H_{2}(w_{0},\ldots H_{2}(w_{0},H_{2}(a,H_{3}(w_{0},3)))\ldots))}_{L(b)-2\ \text{copies of}\ H_{2}}\\
&\vdots\nonumber\\%&\shortvdotswithin{=}
&=H_{2}(w_{0},H_{2}(a,H_{3}(w_{0},L(b)-1)))\\
&=H_{2}(a,H_{3}(w_{0},L(b)))\\
&=H_{2}(a,b).
\end{align}
\end{subequations}
\end{proof}
\begin{definition}\label{defn:ahyp}
 The sequence of commutative hyperoperations generated from the abelian monoid $(\mathbb{N},H_{2})$ is the sequence of binary operations $F_{n}:(\mathord{\stackrel{n-1}{\mathbb{N}}})^{2}\rightarrow\mathord{\stackrel{n-1}{\mathbb{N}}}$ for $n=2,3,4,\dotsc$, defined recursively as
 %% You could simply write:  F_{n}(a,b)=E(F_{n-1}(L(a),L(b))).
 \begin{equation}
 F_{n}(a,b)=\left\{\begin{array}{ll}
E(H_{2}(L(a),L(b))),&\text{if}\ n=2;\\
 E(F_{n-1}(L(a),L(b))),&\text{otherwise}.
 \end{array}\right.
 \end{equation}
 %% Note that for n>1: 
 %% F_{n}(a,b)=E^{n-1}(F_{1}(L^{n-1}(a),L^{n-1}(b)))
 %% explaining why a,b\in\stackrel{n-1}{N} is the right choice
 %% since it guarantees L^{n-1}(a),L^{n-1}(b)\in N.
For $a,b\in\mathord{\stackrel{0}{\mathbb{N}}}$, denote
 \begin{equation}\label{eq:f1f2}
 F_{n}(a,b)=\left\{\begin{array}{ll}
H_{1}(a,b),&\text{if}\ n=0;\\
H_{2}(a,b),&\text{if}\ n=1.
 \end{array}\right.
 \end{equation}
 %% Equation: F_{1}(a,b)=E(F_{0}(L(a),L(b))) is valid for a,b\in\stackrel{1}{N}, as proven in the proposition above.
 \end{definition}
%\begin{remark}
In Bennett's original report \cite{BE15} is less constructive there is no discussion of the sets $\mathord{\stackrel{n}{\mathbb{N}}}$, instead focusing on the usual real and complex numbers. %%exponential function to generate the sequence of commutative hyperoperations.  %In this paper, base is chosen to facilitate a more constructive approach. 
\begin{theorem}\label{th:Fsemiring}
The tuple $(\mathord{\stackrel{n}{\mathbb{N}}},F_{n},F_{n+1})$ is a commutative semiring.% with additive identity $\mathord{\stackrel{n}{0}}$ and multiplicative identity $\mathord{\stackrel{n}{1}}$.
\end{theorem}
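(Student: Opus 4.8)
The plan is to proceed by induction on $n$, using the exponential map $E$ to transport the commutative-semiring structure from level $n$ to level $n+1$. The base case $n=0$ is exactly Proposition \ref{prop:Hsemiring}, since $F_0=H_1$ and $F_1=H_2$ on $\mathord{\stackrel{0}{\mathbb{N}}}=\mathbb{N}$ by \eqref{eq:f1f2}.

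The mechanism of the induction is the fact that, for every $k\geq 1$, the map $E$ restricts to a bijection $\mathord{\stackrel{k-1}{\mathbb{N}}}\to\mathord{\stackrel{k}{\mathbb{N}}}$ with inverse $L$, and that this bijection intertwines two consecutive hyperoperations with the next two:
\[
E(F_{k-1}(x,y))=F_{k}(E(x),E(y)) \qquad\text{and}\qquad E(F_{k}(x,y))=F_{k+1}(E(x),E(y)) .
\]
The second identity is just the recursive clause of Definition \ref{defn:ahyp}, read with $a=E(x)$ and $b=E(y)$ so that $L(a)=x$, $L(b)=y$ (the case $k=1$ also uses $F_1=H_2$); the first identity is that same recursion with $k$ replaced by $k-1$ when $k\geq 2$, and for $k=1$ it is precisely Proposition \ref{prop:mrule}. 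Each of these is read on the appropriate set, keeping in mind that $F_j$ is defined only on $\mathord{\stackrel{j-1}{\mathbb{N}}}$ and that the filtration is descending.

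For the inductive step, suppose $(\mathord{\stackrel{n}{\mathbb{N}}},F_n,F_{n+1})$ is a commutative semiring; take $k=n+1$ in the two displayed identities. Since $F_n$ is closed on $\mathord{\stackrel{n}{\mathbb{N}}}$ by the inductive hypothesis and $E$ maps $\mathord{\stackrel{n}{\mathbb{N}}}$ onto $\mathord{\stackrel{n+1}{\mathbb{N}}}$, the first identity shows $F_{n+1}$ is closed on $\mathord{\stackrel{n+1}{\mathbb{N}}}$, while $F_{n+2}$ maps $(\mathord{\stackrel{n+1}{\mathbb{N}}})^2$ into $\mathord{\stackrel{n+1}{\mathbb{N}}}$ directly by Definition \ref{defn:ahyp}. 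Thus $E$ is a bijection $\mathord{\stackrel{n}{\mathbb{N}}}\to\mathord{\stackrel{n+1}{\mathbb{N}}}$ carrying the ordered pair of operations $(F_n,F_{n+1})$ on $\mathord{\stackrel{n}{\mathbb{N}}}$ onto the ordered pair $(F_{n+1},F_{n+2})$ on $\mathord{\stackrel{n+1}{\mathbb{N}}}$. Every axiom of a commutative semiring --- closure, commutativity and associativity of both operations, the identity laws for $0$ and $1$, distributivity, and the absorption law $0\cdot a=0$ --- is an identity preserved under transport of structure along such a bijection, so $(\mathord{\stackrel{n+1}{\mathbb{N}}},F_{n+1},F_{n+2})$ is a commutative semiring, with additive identity $E(\mathord{\stackrel{n}{0}})=\mathord{\stackrel{n+1}{0}}$ and multiplicative identity $E(\mathord{\stackrel{n}{1}})=\mathord{\stackrel{n+1}{1}}$. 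This closes the induction.

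The only point requiring real care is domain bookkeeping rather than any deep obstacle: since $F_j$ is a priori defined only on $\mathord{\stackrel{j-1}{\mathbb{N}}}$, one must use the descending filtration $\mathord{\stackrel{n+1}{\mathbb{N}}}\subset\mathord{\stackrel{n}{\mathbb{N}}}$ to check that every nested expression in the intertwining identities is well-formed and lands at the level claimed, and that the single function $F_{n+1}$, restricted suitably, is being used consistently in its two roles --- as the multiplication at level $n$ and as the addition at level $n+1$. Once these restrictions are verified, the statement follows from Proposition \ref{prop:Hsemiring}, Proposition \ref{prop:mrule}, Definition \ref{defn:ahyp}, and transport of structure.
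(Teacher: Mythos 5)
Your proposal is correct and follows essentially the same route as the paper: induction on $n$ with base case Proposition~\ref{prop:Hsemiring}, using the bijection $E:\mathord{\stackrel{n}{\mathbb{N}}}\rightarrow\mathord{\stackrel{n+1}{\mathbb{N}}}$ and the identities $E(F_{n}(x,y))=F_{n+1}(E(x),E(y))$, $E(F_{n+1}(x,y))=F_{n+2}(E(x),E(y))$ to carry the semiring structure up one level; the paper merely writes out the transport-of-structure argument axiom by axiom (unfolding $E$ and $L$ in each of the nine verifications) where you invoke the general principle once. Your explicit remark that the first intertwining identity at the bottom step is Proposition~\ref{prop:mrule} rather than the recursive clause of Definition~\ref{defn:ahyp} is a point the paper uses only implicitly, so that care is welcome but does not change the method.
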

\begin{proof}
By induction.  From Proposition~\ref{prop:Hsemiring} the tuple $(\mathbb{N},H_{1},H_{2})$ is a commutative semiring and by definition $(\mathbb{N},H_{1},H_{2})=(\mathord{\stackrel{0}{\mathbb{N}}},F_{0},F_{1})$.  Assume the tuple $(\mathord{\stackrel{n-1}{\mathbb{N}}},F_{n-1},F_{n})$ is a commutative semiring, with additive identity $\mathord{\stackrel{n-1}{0}}$ and multiplicative identity $\mathord{\stackrel{n-1}{1}}$.  The following is the proof that $(\mathord{\stackrel{n}{\mathbb{N}}},F_{n},F_{n+1})$ is a commutative semiring. Let $a,b,c\in\mathord{\stackrel{n}{\mathbb{N}}}$.
\begin{enumerate}
\item \textit{Closure of addition.}
\begin{equation}
F_{n}(a,b)=E(F_{n-1}(L(a),L(b)))\in E(\mathord{\stackrel{n-1}{\mathbb{N}}})=\mathord{\stackrel{n}{\mathbb{N}}}.
\end{equation}
\item \textit{Additive identity.} 
%%Let $\stackrel{n-1}{0}$ be the additive identity in $(\stackrel{n-1}{\mathbb{N}},F_{n-1},F_{n})$, then
\begin{subequations}
\begin{align}
F_{n}(a,\mathord{\stackrel{n}{0}})
&=E(F_{n-1}(L(a),\mathord{\stackrel{n-1}{0}}))\\
&=E(L(a))\\
&=a.
\end{align}
\end{subequations}
\item \textit{Commutativity of addition.}
\begin{subequations}
\begin{align}
F_{n}(a,b)&=E(F_{n-1}(L(a),L(b)))\\
&=E(F_{n-1}(L(b),L(a)))\\
&=F_{n}(b,a).
\end{align}
\end{subequations}
\item \textit{Associativity of addition.}
\begin{subequations}
\begin{align}
F_{n}(a,F_{n}(b,c))&=E(F_{n-1}(L(a),L(F_{n}(b,c))))\\
&=E(F_{n-1}(L(a),L(E(F_{n-1}(L(b),L(c))))))\\
&=E(F_{n-1}(L(a),F_{n-1}(L(b),L(c))))\\
&=E(F_{n-1}(F_{n-1}(L(a),L(b)),L(c)))\\
&=F_{n}(E(F_{n-1}(L(a),L(b))),E(L(c)))\\
&=F_{n}(F_{n}(E(L(a)),E(L(b))),c)\\
&=F_{n}(F_{n}(a,b),c).
\end{align}
\end{subequations}
%This concludes the first part of the proof, that $(\mathord{\stackrel{n}{\mathbb{N}}},F_{n})$ is an abelian monoid.  The proofs that $(\mathord{\stackrel{n}{\mathbb{N}}},F_{n+1})$ is a monoid and $F_{n+1}$ is distributive in $F_{n}$ are the following:\\
%
\item \textit{Closure of multiplication.}
\begin{equation}
F_{n+1}(a,b)=E(F_{n}(L(a),L(b)))\in E(\mathord{\stackrel{n-1}{\mathbb{N}}})=\mathord{\stackrel{n}{\mathbb{N}}}.
\end{equation}
\item \textit{Multiplicative identity.}  %%Let $\stackrel{n-1}{1}$ be the multiplicative identity in $(\stackrel{n-1}{\mathbb{N}},F_{n-1},F_{n})$, then
\begin{subequations}
\begin{align}
F_{n+1}(a,\mathord{\stackrel{n}{1}})
&=E(F_{n}(L(a),\mathord{\stackrel{n-1}{1}}))\\
&=E(L(a))\\
&=a.
\end{align}
\end{subequations}
\item \textit{Distributivity.}
%  \label{proof-1a}
\begin{subequations}
\begin{align}
F_{n+1}(a,F_{n}(b,c))
&=E(F_{n}(L(a),L(F_{n}(b,c))))\\
&=E(F_{n}(L(a),F_{n-1}(L(b),L(c))))\\
&=E(F_{n-1}(F_{n}(L(a),L(b)),F_{n}(L(a),L(c))))\\
&=F_{n}(F_{n+1}(a,b),F_{n+1}(a,c)).
\end{align}
\end{subequations}
\item \textit{Commutativity of multiplication.}
\begin{subequations}
\begin{align}
F_{n+1}(a,b)&=E(F_{n}(L(a),L(b)))\\
&=E(F_{n}(L(b),L(a)))\\
&=F_{n+1}(b,a).
\end{align}
\end{subequations}
%% Note that unlike in H_{2}, there was no direct use distributivity to prove commutativity on F_{n+2}.  Probably distr assumption is hidden in the map E(.) and L(.)
%
\item \textit{Associativity of multiplication.}
\begin{subequations}
\begin{align}
F_{n+1}(a,F_{n+1}(b,c))&=E(F_{n}(L(a),L(F_{n+1}(b,c))))\\
&=E(F_{n}(L(a),L(E(F_{n}(L(b),L(c))))))\\
&=E(F_{n}(L(a),F_{n}(L(b),L(c))))\\
&=E(F_{n}(F_{n}(L(a),L(b)),L(c)))\\
&=F_{n+1}(E(F_{n}(L(a),L(b))),E(L(c)))\\
&=F_{n+1}(F_{n+1}(E(L(a)),E(L(b))),c)\\
&=F_{n+1}(F_{n+1}(a,b),c).
\end{align}
\end{subequations}
\end{enumerate}
\end{proof}
\begin{remark}
The function $E$ is the generator of isomorphisms in the sequence of semirings $\{(\mathord{\stackrel{n}{\mathbb{N}}},F_{n},F_{n+1})\}_{n\in\mathbb{N}}$.  
\end{remark}
\begin{remark} 
The monoid $(\mathord{\stackrel{n}{\mathbb{N}}},F_{n})$ is a submonoid of $(\mathord{\stackrel{n-1}{\mathbb{N}}},F_{n})$.
\end{remark}
\begin{definition}
Define the ordering relation $\stackrel{n}{<}$ in the monoid $(\mathord{\stackrel{n}{\mathbb{N}}},F_{n})$ as
\begin{equation}
a\stackrel{n}{<}b\iff\exists c\in\mathord{\stackrel{n}{\mathbb{N}}}\setminus\{\mathord{\stackrel{n}{0}}\}:F_{n}(a,c)=b.
\end{equation}
\end{definition}
\begin{proposition}\label{prop:mono}
Let $a,b\in\mathord{\stackrel{n}{\mathbb{N}}}$, then
\begin{equation}
a\stackrel{n}{<}b\iff E(a)\stackrel{n}{<}E(b).
\end{equation}
\end{proposition}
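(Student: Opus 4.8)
The plan is to push the whole question down to the semiring $(\mathbb{N},H_{1},H_{2})$, where $H_{1}$ is ordinary addition and where the statement becomes a triviality about the exponential $k\mapsto H_{3}(w_{0},k)=w_{0}^{k}$. The vehicle is the iterated inverse $L^{n}$: by the remark preceding the statement, $E$ generates semiring isomorphisms $(\mathord{\stackrel{m}{\mathbb{N}}},F_{m},F_{m+1})\to(\mathord{\stackrel{m+1}{\mathbb{N}}},F_{m+1},F_{m+2})$, so $L=E^{-1}$ and hence $L^{n}$ are isomorphisms as well, and $L^{n}\colon(\mathord{\stackrel{n}{\mathbb{N}}},F_{n},F_{n+1})\to(\mathbb{N},H_{1},H_{2})$ is a semiring isomorphism with $L^{n}(\mathord{\stackrel{n}{k}})=k$.

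First I would note that any monoid isomorphism preserves and reflects the relation defined by ``$\exists c\neq 0$ with $a+c=b$''; applied to the monoid isomorphism $L^{n}\colon(\mathord{\stackrel{n}{\mathbb{N}}},F_{n})\to(\mathbb{N},H_{1})$, and using $H_{1}=+$, this yields $a\stackrel{n}{<}b\iff L^{n}(a)<L^{n}(b)$ for all $a,b\in\mathord{\stackrel{n}{\mathbb{N}}}$, the right-hand side being the ordinary order on $\mathbb{N}$. Second I would compute the image of $E$ under this transport: for $a\in\mathord{\stackrel{n}{\mathbb{N}}}$ one has $E(a)\in\mathord{\stackrel{n+1}{\mathbb{N}}}\subset\mathord{\stackrel{n}{\mathbb{N}}}$, and writing $a=E^{n}(L^{n}(a))$ together with $L^{n}E^{n+1}=E$ on $\mathbb{N}$ gives $L^{n}(E(a))=E(L^{n}(a))=H_{3}(w_{0},L^{n}(a))=w_{0}^{L^{n}(a)}$. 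Chaining the two facts, $E(a)\stackrel{n}{<}E(b)\iff w_{0}^{L^{n}(a)}<w_{0}^{L^{n}(b)}$; since $w_{0}\geq 2$ the map $x\mapsto w_{0}^{x}$ is strictly increasing on $\mathbb{N}$, so this is equivalent to $L^{n}(a)<L^{n}(b)$, that is, to $a\stackrel{n}{<}b$. Both directions drop out at once.

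The delicate step is not a calculation but the bookkeeping of which set each element lives in. Although $E(a)$ and $E(b)$ land in $\mathord{\stackrel{n+1}{\mathbb{N}}}$, the relation $E(a)\stackrel{n}{<}E(b)$ in the statement is the order of $\mathord{\stackrel{n}{\mathbb{N}}}$, so it must be read through $L^{n}$, which contracts $E(a)$ to $w_{0}^{L^{n}(a)}$ rather than to $L^{n}(a)$, and never through $L^{n+1}$; conflating the two would make the proposition vacuous. For the same reason I would record explicitly that $\stackrel{n}{<}$ is a genuine strict linear order, transported from $(\mathbb{N},+)$ along $L^{n}$ (cancellativity is what forces irreflexivity), so that the equivalence carries content. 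A self-contained variant avoiding $\mathbb{N}$ altogether runs the identical computation inside $\mathord{\stackrel{n}{\mathbb{N}}}$ via the iteration identity $F_{n+1}(x,\mathord{\stackrel{n}{k}})=F_{n}(x,F_{n}(x,\ldots F_{n}(x,x)\ldots))$ with $k$ copies of $x$ (a short induction in its own right) to produce the order-witnesses by hand, but the isomorphism argument is shorter and I would present it.
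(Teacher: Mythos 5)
Your proof is correct, but it follows a genuinely different route from the paper's. The paper argues directly inside $\mathord{\stackrel{n}{\mathbb{N}}}$: from a witness $c\neq\mathord{\stackrel{n}{0}}$ with $F_{n}(a,c)=b$ it applies $E$ to obtain $F_{n+1}(E(a),E(c))=E(b)$, and then rewrites this product as $F_{n}(E(a),c')$ for some nonzero $c'$, so that $c'$ itself witnesses $E(a)\stackrel{n}{<}E(b)$; the converse is declared analogous. You instead transport the whole question along the monoid isomorphism $L^{n}\colon(\mathord{\stackrel{n}{\mathbb{N}}},F_{n})\to(\mathbb{N},H_{1})$ (resting on Definition~\ref{defn:ahyp} and Proposition~\ref{prop:mrule}), identify $\stackrel{n}{<}$ with the ordinary order via $a\stackrel{n}{<}b\iff L^{n}(a)<L^{n}(b)$, compute $L^{n}(E(a))=E(L^{n}(a))=w_{0}^{L^{n}(a)}$, and reduce the proposition to the strict monotonicity of $x\mapsto w_{0}^{x}$ on $\mathbb{N}$ for $w_{0}\geq 2$; your bookkeeping remark that $E(a),E(b)$ must be compared through $L^{n}$ (order $\stackrel{n}{<}$, not $\stackrel{n+1}{<}$) is exactly the point that keeps the statement from trivializing, and it is handled correctly. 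What your route buys: both implications drop out at once, and you sidestep the one delicate step of the paper, namely the conversion $F_{n+1}(E(a),E(c))=F_{n}(E(a),c')$, which the paper states with the explicit witness $c'=S^{-1}(E(c))$ --- an identity that is not literally correct as written (for $n=0$, $w_{0}=2$, $a=c=1$ it would assert $4=3$); the intended witness is rather something like $c'=F_{n+1}(E(a),S_{n}^{-1}(E(c)))$ obtained by unfolding the recursion for $[n:2]=F_{n+1}$, together with the observation that it is nonzero. What the paper's route buys: it stays entirely inside the monoid at hand, produces an explicit order witness, and does not lean on the isomorphism machinery or on facts about ordinary exponentiation. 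Your only externally cited ingredient is the strict monotonicity of $w_{0}^{x}$ on $\mathbb{N}$, which is fair to take as standard, or can be folded in with the one-line induction you sketch at the end.
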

\begin{proof}
The proof that $a\stackrel{n}{<}b\Rightarrow E(a)\stackrel{n}{<}E(b)$ is the following.  The relation $a\stackrel{n}{<}b$ is equivalent to the existence of an element $c\in\mathord{\stackrel{n}{\mathbb{N}}}$ such that $F_{n}(a,c)=b$.  The latter is equivalent to the equation $F_{n+1}(E(a),E(c))=E(b)$.  Since $F_{n+1}(E(a),E(c))=F_{n}(E(a),c')$, where $c'=S^{-1}(E(c))$,
%% BEGIN PROOF
%\begin{equation}
%F_{n+1}(E(a),E(c))&=\underbrace{F_{n}(E(a),F_{n}(E(a),\dotsc F(E(a),E(a))\dotsc))}_{E(c)\ \text{copies of}\ E(a)},
%\end{equation}
%% END PROOF
then $E(a)\stackrel{n}{<}E(b)$.  The converse statement is proven in a similar way.
\end{proof}
\begin{proposition}\label{prop:monday}
Let $a,b\in\mathord{\stackrel{n}{\mathbb{N}}}$, then
\begin{equation}
a\stackrel{n-1}{<}b\iff a\stackrel{n}{<}b.
\end{equation}
\end{proposition}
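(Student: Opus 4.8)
The plan is to reduce the claim to Proposition~\ref{prop:mono} applied one filtration level below. Since $a,b\in\mathord{\stackrel{n}{\mathbb{N}}}=E(\mathord{\stackrel{n-1}{\mathbb{N}}})$, I would write $a=E(L(a))$ and $b=E(L(b))$ with $L(a),L(b)\in\mathord{\stackrel{n-1}{\mathbb{N}}}$, using that $L$ restricts to a bijection $\mathord{\stackrel{n}{\mathbb{N}}}\rightarrow\mathord{\stackrel{n-1}{\mathbb{N}}}$ with $L(\mathord{\stackrel{n}{k}})=\mathord{\stackrel{n-1}{k}}$; in particular it carries the additive identity $\mathord{\stackrel{n}{0}}$ of $(\mathord{\stackrel{n}{\mathbb{N}}},F_{n})$ to the additive identity $\mathord{\stackrel{n-1}{0}}$ of $(\mathord{\stackrel{n-1}{\mathbb{N}}},F_{n-1})$, and conversely $E$ carries $\mathord{\stackrel{n-1}{0}}$ to $\mathord{\stackrel{n}{0}}$.

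The first step is to record the identity
\begin{equation}\label{eq:shift-down}
F_{n}(x,y)=E\bigl(F_{n-1}(L(x),L(y))\bigr)\qquad\text{for all }x,y\in\mathord{\stackrel{n}{\mathbb{N}}},
\end{equation}
which holds by Definition~\ref{defn:ahyp} for $n\geq 2$ and is exactly Proposition~\ref{prop:mrule} for $n=1$. The second step is to use \eqref{eq:shift-down} and the injectivity of $E$ to prove
\begin{equation}\label{eq:down-equiv}
a\stackrel{n}{<}b\iff L(a)\stackrel{n-1}{<}L(b).
\end{equation}
For the forward implication, from $F_{n}(a,c)=b$ with $c\in\mathord{\stackrel{n}{\mathbb{N}}}\setminus\{\mathord{\stackrel{n}{0}}\}$ I would deduce $E(F_{n-1}(L(a),L(c)))=E(L(b))$, hence $F_{n-1}(L(a),L(c))=L(b)$ by injectivity of $E$, with $L(c)\in\mathord{\stackrel{n-1}{\mathbb{N}}}\setminus\{\mathord{\stackrel{n-1}{0}}\}$ since $L$ is injective and $L(\mathord{\stackrel{n}{0}})=\mathord{\stackrel{n-1}{0}}$; this gives $L(a)\stackrel{n-1}{<}L(b)$. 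For the converse, from $F_{n-1}(L(a),d)=L(b)$ with $d\in\mathord{\stackrel{n-1}{\mathbb{N}}}\setminus\{\mathord{\stackrel{n-1}{0}}\}$, I would apply $E$ and read \eqref{eq:shift-down} backwards with $y=E(d)$ (so that $L(E(d))=d$) to get $F_{n}(a,E(d))=E(L(b))=b$; since $E(d)\in\mathord{\stackrel{n}{\mathbb{N}}}\setminus\{\mathord{\stackrel{n}{0}}\}$, this witnesses $a\stackrel{n}{<}b$.

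The last step is to apply Proposition~\ref{prop:mono} at level $n-1$ to the pair $L(a),L(b)\in\mathord{\stackrel{n-1}{\mathbb{N}}}$: it yields $L(a)\stackrel{n-1}{<}L(b)\iff E(L(a))\stackrel{n-1}{<}E(L(b))$, that is $L(a)\stackrel{n-1}{<}L(b)\iff a\stackrel{n-1}{<}b$ (recall $a=E(L(a))$ and $b=E(L(b))$), the relation $\stackrel{n-1}{<}$ being meaningful on $a,b$ because $\mathord{\stackrel{n}{\mathbb{N}}}\subset\mathord{\stackrel{n-1}{\mathbb{N}}}$. Chaining this equivalence with \eqref{eq:down-equiv} yields $a\stackrel{n}{<}b\iff a\stackrel{n-1}{<}b$.

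I expect the only real difficulty to be the bookkeeping inside \eqref{eq:down-equiv}: keeping track of which filtration level each witness lives in, and checking in both directions that the ``nonzero'' clause on the witness transfers --- which is exactly where it matters that $E$ and $L$ are mutually inverse bijections between consecutive levels that match up the zero elements. One cannot argue more crudely by restricting a witness for $\stackrel{n-1}{<}$ down to $\mathord{\stackrel{n}{\mathbb{N}}}$, because such a witness need not belong to $\mathord{\stackrel{n}{\mathbb{N}}}$; this is precisely why the argument must be routed through Proposition~\ref{prop:mono}.
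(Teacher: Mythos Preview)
Your proof is correct and follows essentially the same route as the paper: both arguments factor the equivalence through the intermediate statement $L(a)\stackrel{n-1}{<}L(b)$, using Proposition~\ref{prop:mono} at level $n-1$ for one half and the recursive identity $F_{n}(x,y)=E(F_{n-1}(L(x),L(y)))$ together with injectivity of $E$ for the other. Your write-up is in fact slightly more careful than the paper's in tracking that the nonzero condition on the witness survives the passage through $E$ and $L$.
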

\begin{proof}
Let $a,b\in\mathord{\stackrel{n}{\mathbb{N}}}\subset\mathord{\stackrel{n-1}{\mathbb{N}}}$. The proof that $a\stackrel{n-1}{<}b\Rightarrow a\stackrel{n}{<}b$ is the following: If $a\stackrel{n-1}{<}b$, the strict monotonicity of $L$ in $\mathord{\stackrel{n-1}{\mathbb{N}}}$, from Proposition \ref{prop:mono}, implies $L(a)\stackrel{n-1}{<}L(b)$, thereby $\exists c':F_{n-1}(L(a),c')=L(b)$.  Without loss of generality let $c'=L(c)$, where $c\in\mathord{\stackrel{n}{\mathbb{N}}}$.  Then
\begin{equation}
E(F_{n-1}(L(a),L(c)))=E(L(b)),
\end{equation}
i.e., $F_{n}(a,c)=b$, thereby $a\stackrel{n}{<}b$.  The proof that $a\stackrel{n-1}{<}b\Leftarrow a\stackrel{n}{<}b$ is the following: If $a\stackrel{n}{<}b$, then $\exists c\in\mathord{\stackrel{n}{\mathbb{N}}}$ such that $F_{n}(a,c)=b$, that is
\begin{equation}
E(F_{n-1}(L(a),L(c)))=E(L(b)).
\end{equation}
Since $E$ is an injective function, then
\begin{equation}
F_{n-1}(L(a),L(c))=L(b),
\end{equation}
implying $L(a)\stackrel{n-1}{<}L(b)$.  Since $L$ is strictly monotone in $\mathord{\stackrel{n-1}{\mathbb{N}}}$, then $a\stackrel{n-1}{<}b$.
\end{proof}
\begin{remark}
The ordering relation $\stackrel{n}{<}$ in the monoid $(\mathord{\stackrel{n}{\mathbb{N}}},F_{n})$ is equivalent to the restriction of the standard ordering $<$ in $\mathord{\stackrel{n}{\mathbb{N}}}$ as a subset of $\mathbb{N}$.
\end{remark}
\begin{remark}
Since $\mathord{\stackrel{n}{1}}=\mathord{\stackrel{n+1}{0}}$, all identity elements can be written as $\mathord{\stackrel{n}{0}}$, for a given $n\in\mathbb{N}$.  They can be generated as
\begin{subequations}\label{eq:tetration}
\begin{align}
\mathord{\stackrel{0}{0}}&=H_{4}(w_{0},-1)=0,\\
\mathord{\stackrel{1}{0}}&=H_{4}(w_{0},0)=1,\\
\mathord{\stackrel{2}{0}}&=H_{4}(w_{0},1)=w_{0},\\
%\stackrel{3}{0}&=H_{4}(w_{0},2)=w_{0}^{w_{0}},\\
%&\shortvdotswithin{=}\nonumber
&\vdots\nonumber\\
\mathord{\stackrel{n+1}{0}}&=H_{4}(w_{0},n)
=\underbrace{H_{3}(w_{0},H_{3}(w_{0},\dotsc, H_{3}(w_{0},w_{0})))}_{n\ \text{copies of}\ w_{0}},\\
%=w_{0}\:^{\wedge}\:w_{0}\:^{\wedge}\:\dotsb n\text{-times}\dotsb\:^{\wedge}\: w_{0},\\
&\vdots\nonumber
\end{align}
\end{subequations}
In the expression $H_{4}(w_{0},n)$, the integer argument $w_{0}$ is known as the \textit{base} and the integer argument $n$ is known as the \textit{height} or \textit{level}.  For completness, in the Eq.~\eqref{eq:tetration} the definition of $H_{4}$ has been extended to allow for negative integers in the height.  The operation $H_{4}$ is known as \textit{tetration}, restricted to the natural numbers.
\end{remark}
Underlying the sequence of commutative hyperoperations is the fact that any injective function $f$ with domain in a monoid, generates an isomorphism between monoids.  Let $a,b\in f(\mathord{\stackrel{0}{\mathbb{N}}})$.  The right conjugation of ordinary addition $F_{0}$ by $f$ is
%\url{https://en.wikipedia.org/wiki/Magma_(algebra)#Morphism_of_magmas}
\begin{equation}
{\rm Ad}_{f}(F_{0})(a,b)=f(F_{0}(f^{-1}(a),f^{-1}(b)).
\end{equation}
The operation ${\rm Ad}_{f}(F_{0})$ forms a monoid in $f(\mathord{\stackrel{0}{\mathbb{N}}})$ which is isomorphic to $(\mathord{\stackrel{0}{\mathbb{N}}},F_{0})$.  In fact all properties studied so far can be reproduced in this new setting; if one starts from the successor function ${\rm Ad}_{f}(S)$ in $f(\mathbb{N})$, it is easy to prove by induction that the corresponding regular hyperoperations take the form ${\rm Ad}_{f}(H_{n})$.
%\url{https://en.wikipedia.org/wiki/Function_composition#Functional_powers}
Further assuming $f$ is composable with itself, the tuples $\{(f^{n}(\mathord{\stackrel{0}{\mathbb{N}}}),{\rm Ad}_{f^{n}}(F_{0}))\}_{n\in\mathbb{N}}$ become a sequence of commutative monoids.  A sequence of commutative semirings can be formed as $\{(f^{n}(\mathord{\stackrel{0}{\mathbb{N}}}),{\rm Ad}_{f^{n}}(F_{0}),{\rm Ad}_{f^{n}}(F_{1}))\}_{n\in\mathbb{N}}$, with $f$ as the generator of semiring isomorphisms. A special case is when the function $f$ is \textit{exponential} along the sequence of semirings, meaning that $f^{n}(\mathord{\stackrel{0}{\mathbb{N}}})\cap f^{n+1}(\mathord{\stackrel{0}{\mathbb{N}}})$ is a nonempty set where,
\begin{equation}
{\rm Ad}_{f^{n}}(F_{1})={\rm Ad}_{f^{n+1}}(F_{0}).
\end{equation}
In Fig.~\ref{fig:1} is presented a visualization that contrasts the case of exponential vs.~nonexponential $f$.
\begin{figure}[h]
     \centering
     \begin{subfigure}[b]{0.49\textwidth}
         \centering
         \includegraphics[trim=35 0 35 0,clip,width=0.85\textwidth]{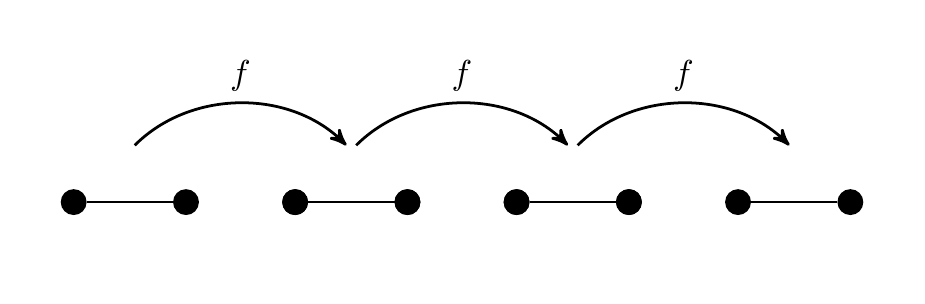}
         \caption{Nonexponential $f$.}
         \label{fig:1a}
     \end{subfigure}
     \hfill
     \begin{subfigure}[b]{0.49\textwidth}
         \centering
         \includegraphics[trim=70 0 70 0,clip,width=0.45\textwidth]{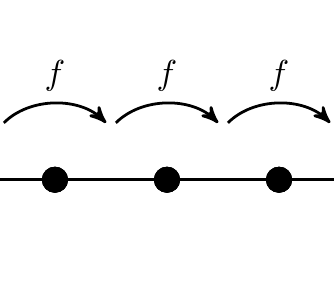}
         \caption{Exponential $f$.}
         \label{fig:1b}
     \end{subfigure}
        \caption{Visualizing a sequence of semirings: A semiring (two disks connected by a line) is comprised of two operations: addition (left-hand disk) and multiplication (right-hand disk). (a) A nonexponential injective function $f$ induces a sequence where nearby semirings do not share operations. (b)  An exponential function $f$ induces a sequence where nearby semirings share one operation.}
        \label{fig:1}
\end{figure}

\section{New Sequences of Hyperoperations}\label{sec:newseq}
This section presents new sequences of hyperoperations originated from Peano systems initially associated to the semirings $\{(\mathord{\stackrel{n}{\mathbb{N}}},F_{n},F_{n+1})\}_{n\in\mathbb{N}}$.  The commutative hyperoperations $\{F_{n}\}_{n\in\mathbb{N}}$ are shown to be regular hyperoperations.  A recursive procedure to create new sequences of hyperoperations is stablished.
\begin{definition}\label{def:seneca}
Define the successor function in $\mathord{\stackrel{n}{\mathbb{N}}}$ as the recursive function
\begin{equation}
S_{n}:\mathord{\stackrel{n}{\mathbb{N}}}\rightarrow\mathord{\stackrel{n}{\mathbb{N}}}\setminus\{\mathord{\stackrel{n}{0}}\}:S_{n}(a)=\left\{\begin{array}{ll}S(a),&\text{if}\ n=0;\\E(S_{n-1}(L(a))),&\text{if}\ n>0.\end{array}\right.
\end{equation}
Denote its inverse function as $S_{n}^{-1}$.
\end{definition}
\begin{remark}
The tuples $\{(\mathord{\stackrel{n}{\mathbb{N}}},\mathord{\stackrel{n}{0}},S_{n})\}_{n\in\mathbb{N}}$ is a sequence of Peano systems with initial element $(\mathbb{N},0,S)$.
\end{remark}
\begin{definition}\label{def:liverpool}
 The sequence of regular hyperoperations generated from the succesor in $\mathord{\stackrel{n}{\mathbb{N}}}$ is the sequence of binary operations $[n:m]:(\mathord{\stackrel{n}{\mathbb{N}}})^{2}\rightarrow\mathord{\stackrel{n}{\mathbb{N}}}$, for $m\in\mathbb{N}$, defined recursively as
 \begin{equation}
 a\:[n:m]\:b=\left\{\begin{array}{ll}
 S_{n}(b),&\text{if}\ m=0;\\ 
 a,&\text{if}\ m=1\ \text{and}\ b=\mathord{\stackrel{n}{0}};\\
 \mathord{\stackrel{n}{0}},&\text{if}\ m=2\ \text{and}\ b=\mathord{\stackrel{n}{0}};\\
 %\stackrel{n}{1}=S_{n}(\stackrel{n}{0})
 \mathord{\stackrel{n}{1}},&\text{if}\ m\geq 3\ \text{and}\ b=\mathord{\stackrel{n}{0}};\\
 a\:[n:m-1]\:(a\:[n:m]\:S_{n}^{-1}(b)),&\text{otherwise}.
 \end{array}\right.
 \end{equation}
Note that $a\:[0:m]\:b=H_{m}(a,b)$ for $a,b\in\mathord{\stackrel{0}{\mathbb{N}}}$.
\end{definition}
The following results are valid for those values of $[n:m]$ such that $n,m\in\mathbb{N}$.
\begin{proposition}
The tuple $(\mathord{\stackrel{n}{\mathbb{N}}},[n:1],[n:2])$ is a commutative semiring with additive identity $\mathord{\stackrel{n}{0}}$ and multiplicative identity $\mathord{\stackrel{n}{1}}$.
\end{proposition}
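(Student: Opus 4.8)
The plan is to exploit the fact, established in Definition~\ref{def:seneca} and the surrounding remarks, that $(\mathord{\stackrel{n}{\mathbb{N}}},\mathord{\stackrel{n}{0}},S_{n})$ is a Peano system, so $\mathord{\stackrel{n}{\mathbb{N}}}$ together with $S_{n}$ is a faithful copy of $(\mathbb{N},0,S)$. Concretely, the function $E^{n}:\mathbb{N}\to\mathord{\stackrel{n}{\mathbb{N}}}$ (the $n$-fold composite of $E$, with inverse $L^{n}$) is a bijection carrying $0$ to $\mathord{\stackrel{n}{0}}$ and intertwining $S$ with $S_{n}$, i.e.\ $S_{n}\circ E^{n}=E^{n}\circ S$, which follows by an easy induction on $n$ directly from Definition~\ref{def:seneca}. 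The key observation is then that $[n:m]$ is precisely the conjugate ${\rm Ad}_{E^{n}}(H_{m})$ of the regular hyperoperation $H_{m}$ by this bijection; that is, $a\:[n:m]\:b=E^{n}(H_{m}(L^{n}(a),L^{n}(b)))$ for all $a,b\in\mathord{\stackrel{n}{\mathbb{N}}}$.

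First I would prove that identity $a\:[n:m]\:b=E^{n}(H_{m}(L^{n}(a),L^{n}(b)))$ by induction on $m$, using the recursion in Definition~\ref{def:liverpool} against the recursion in Definition~\ref{defn:hyp}. For $m=0$ it reduces to $S_{n}\circ E^{n}=E^{n}\circ S$; for the base cases $b=\mathord{\stackrel{n}{0}}$ (with $m=1$, $m=2$, $m\geq 3$) it follows from $E^{n}(0)=\mathord{\stackrel{n}{0}}$, $E^{n}(1)=\mathord{\stackrel{n}{1}}$ together with the corresponding base cases for $H_{m}$; and the recursive clause matches term-by-term once one rewrites $S_{n}^{-1}(b)=E^{n}(S^{-1}(L^{n}(b)))$. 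With this conjugation identity in hand, the statement follows immediately: Proposition~\ref{prop:Hsemiring} gives that $(\mathbb{N},H_{1},H_{2})$ is a commutative semiring, and conjugation by the bijection $E^{n}$ transports every semiring axiom verbatim. The additive identity $0$ and multiplicative identity $1$ of $(\mathbb{N},H_{1},H_{2})$ are carried to $E^{n}(0)=\mathord{\stackrel{n}{0}}$ and $E^{n}(1)=\mathord{\stackrel{n}{1}}$, which are exactly the claimed identities; here I would invoke the last Remark of Section~\ref{sec:semirings}, which records that $\mathord{\stackrel{n}{1}}=\mathord{\stackrel{n+1}{0}}$ and that these coincide with the images of $0$ and $1$ under iterated $E$.

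Alternatively — and this is perhaps cleaner to write — one can avoid the auxiliary conjugation lemma and instead argue that $(\mathord{\stackrel{n}{\mathbb{N}}},[n:1],[n:2])$ stands to the Peano system $(\mathord{\stackrel{n}{\mathbb{N}}},\mathord{\stackrel{n}{0}},S_{n})$ exactly as $(\mathbb{N},H_{1},H_{2})$ stands to $(\mathbb{N},0,S)$: the defining recursion in Definition~\ref{def:liverpool} is obtained from Definition~\ref{defn:hyp} by the uniform substitution $\mathbb{N}\mapsto\mathord{\stackrel{n}{\mathbb{N}}}$, $0\mapsto\mathord{\stackrel{n}{0}}$, $1\mapsto\mathord{\stackrel{n}{1}}$, $S\mapsto S_{n}$, $S^{-1}\mapsto S_{n}^{-1}$. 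Since the entire proof of Proposition~\ref{prop:Hsemiring} uses nothing about $(\mathbb{N},0,S)$ beyond the Peano axioms (closure of $S$, injectivity of $S$, induction), the same line-by-line argument establishes the present proposition, with $\mathord{\stackrel{n}{0}}$ and $\mathord{\stackrel{n}{1}}$ playing the roles of $0$ and $1$. I expect the main obstacle to be purely expository: making the "same proof by relabelling" rigorous requires either spelling out the conjugation identity carefully (the route I would actually take, since it is a genuine lemma rather than a hand-wave) or else convincing the reader that the Peano-system isomorphism $E^{n}:(\mathbb{N},0,S)\to(\mathord{\stackrel{n}{\mathbb{N}}},\mathord{\stackrel{n}{0}},S_{n})$ lifts to a semiring isomorphism onto $(\mathord{\stackrel{n}{\mathbb{N}}},[n:1],[n:2])$ — which is exactly the content of the conjugation identity. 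So the two routes converge, and the honest core of the proof is the inductive verification that $[n:m]={\rm Ad}_{E^{n}}(H_{m})$.
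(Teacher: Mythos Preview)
Your proposal is correct, and your ``alternative'' route is exactly what the paper does: its entire proof reads ``The case $n=0$ is proven in [Proposition~\ref{prop:Hsemiring}]\ldots\ The remaining cases are proven analogously, using the Peano system $(\mathord{\stackrel{n}{\mathbb{N}}},\mathord{\stackrel{n}{0}},S_{n})$.'' Your first route via the conjugation identity $a\:[n:m]\:b=E^{n}(H_{m}(L^{n}(a),L^{n}(b)))$ is also sound, and in fact the paper proves precisely this (in one-step form $a\:[n:m]\:b=E(L(a)\:[n-1:m]\:L(b))$, iterated) as the very next result, Theorem~\ref{th:oxford}; so you would be front-loading that lemma rather than taking a genuinely different path.
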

\begin{proof}
The case $n=0$ is proven in Theorem~\ref{prop:Hsemiring}, in account that $a\:[0:i]\:b=H_{i}(a,b)$, for $i=1,2$. The remaining cases are proven analogously, using the Peano system $(\mathord{\stackrel{n}{\mathbb{N}}},\mathord{\stackrel{n}{0}},S_{n})$.
\end{proof}
\begin{theorem}\label{th:oxford}
Let $a,b\in\mathord{\stackrel{n}{\mathbb{N}}}$, then
\begin{equation}
a\:[n:m]\:b=E(L(a)\:[n-1:m]\:L(b)).
\end{equation}
\end{theorem}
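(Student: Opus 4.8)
The plan is to prove the identity by induction on $m$, mirroring the recursive definition of $[n:m]$ and exploiting the fact (from Definition~\ref{def:seneca}) that $S_n(a)=E(S_{n-1}(L(a)))$, i.e. $E$ conjugates $S_{n-1}$ into $S_n$, and consequently $S_n^{-1}(b)=E(S_{n-1}^{-1}(L(b)))$. The base cases $m=0,1,2$ and the boundary case $b=\mathord{\stackrel{n}{0}}$ should be handled first and directly: for $m=0$ both sides equal $S_n(b)=E(S_{n-1}(L(b)))$ by definition; for $m=1,2,3{+}$ with $b=\mathord{\stackrel{n}{0}}$ one checks that the right-hand side evaluates $L(\mathord{\stackrel{n}{0}})=\mathord{\stackrel{n-1}{0}}$, applies the corresponding clause of $[n-1:m]$ to get $\mathord{\stackrel{n-1}{0}}$, $\mathord{\stackrel{n-1}{0}}$ (careful: for $m=2$) or $\mathord{\stackrel{n-1}{1}}$, and then $E$ of that is $\mathord{\stackrel{n}{0}}$, $\mathord{\stackrel{n}{0}}$, or $\mathord{\stackrel{n}{1}}$ respectively, matching the left-hand clauses; note $E(\mathord{\stackrel{n-1}{1}})=E(\mathord{\stackrel{n-1}{0}}\text{'s successor image})=\mathord{\stackrel{n}{1}}$ using $\mathord{\stackrel{n}{1}}=E(\mathord{\stackrel{n-1}{1}})$ from the notation. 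The subtle point worth flagging here is that $L$ must map $\mathord{\stackrel{n}{\mathbb{N}}}\setminus\{\mathord{\stackrel{n}{0}}\}$ appropriately so that the arguments of $[n-1:m]$ on the right stay inside $\mathord{\stackrel{n-1}{\mathbb{N}}}$; this is exactly what the filtration remark and Definition~\ref{defn:ahyp} guarantee.

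For the inductive step, suppose $m\geq 1$ and $b\neq\mathord{\stackrel{n}{0}}$ (so the ``otherwise'' clause applies), and assume the identity holds for $m-1$ (for all arguments) and for $m$ at the predecessor $S_n^{-1}(b)$. Starting from
\begin{align}
a\:[n:m]\:b &= a\:[n:m-1]\:\bigl(a\:[n:m]\:S_n^{-1}(b)\bigr),\nonumber
\end{align}
I would apply the induction hypothesis at $m$ to the inner expression, writing $a\:[n:m]\:S_n^{-1}(b)=E\bigl(L(a)\:[n-1:m]\:L(S_n^{-1}(b))\bigr)$, then use $L(S_n^{-1}(b))=S_{n-1}^{-1}(L(b))$ (the inverse form of the conjugation identity). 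Next apply the induction hypothesis at $m-1$ to the outer operation, with second argument $E\bigl(L(a)\:[n-1:m]\:S_{n-1}^{-1}(L(b))\bigr)$, whose $L$-image is $L(a)\:[n-1:m]\:S_{n-1}^{-1}(L(b))$. This yields
\begin{align}
a\:[n:m]\:b &= E\Bigl(L(a)\:[n-1:m-1]\:\bigl(L(a)\:[n-1:m]\:S_{n-1}^{-1}(L(b))\bigr)\Bigr),\nonumber
\end{align}
and the bracketed expression is precisely the ``otherwise'' clause of $L(a)\:[n-1:m]\:L(b)$, so the right-hand side collapses to $E\bigl(L(a)\:[n-1:m]\:L(b)\bigr)$, as desired.

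I expect the main obstacle to be bookkeeping rather than conceptual: making sure the conjugation identity $L\circ S_n = S_{n-1}\circ L$ (and its inverse) is actually established — it follows by applying $L$ to both sides of the defining equation $S_n(a)=E(S_{n-1}(L(a)))$ and using $L=E^{-1}$ — and making sure that every intermediate term lies in the set on which the relevant operation is defined, since $[n-1:m]$ has domain $(\mathord{\stackrel{n-1}{\mathbb{N}}})^2$ and $[n:m]$ has domain $(\mathord{\stackrel{n}{\mathbb{N}}})^2$. One should also double-check the edge of the induction: the recursion in $m$ decreases the first index to $m-1$ in the outer call but keeps $m$ in the inner call at a smaller second argument, so a clean formulation is a simultaneous induction (primary on $m$, secondary on $b$ under $\stackrel{n}{<}$, which by Proposition~\ref{prop:monday} is well-founded since it agrees with the standard ordering). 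With that framing the proof is a direct unwinding of definitions.
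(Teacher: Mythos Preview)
Your proposal is correct and follows essentially the same route as the paper: induction on $m$, using the conjugation identity $S_n=E\circ S_{n-1}\circ L$ from Definition~\ref{def:seneca} at the base and then lifting through the recursive clause. The only cosmetic difference is that the paper suppresses your secondary induction on $b$ by fully unfolding $a\:[n:m]\:b$ into an iterated expression with $L^{n}(b)$ copies of $a$ under $[n:m-1]$, applying the $m-1$ hypothesis to every layer at once, and then refolding; your explicit handling of the boundary $b=\mathord{\stackrel{n}{0}}$ and the well-founded secondary induction simply make rigorous what that ``\ldots copies of\ldots'' notation leaves implicit.
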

\begin{proof}
By induction.  Let $a,b\in\mathord{\stackrel{n}{\mathbb{N}}}$.  Without loss of generality let $b=\mathord{\stackrel{n}{k}}$ where $k\in\mathbb{N}$.  The case $m=0$ is trivial.  For $m=1$, one obtains
\begin{subequations}
\begin{align}
a\:[n:1]\:b&=\underbrace{a\:[n:0]\:(a\:[n:0]\:\dotsb (a\:[n:0]\:(a\:[n:0]\:a))\dotsb)}_{k\ \text{copies of}\ [n:0]}\\
&=S_{n}^{k}(a)\\
&=E\circ S_{n-1}^{k}(L(a))\\
&=E(L(a)\:[n-1:1]\:L(b)).
\end{align}
\end{subequations}
%% The last step contains L(b) because L(b)=\stackrel{n-1}{k} which is the natural counter for k iterations in [n-1:m].
Assume
\begin{equation}
a\:[n:m-1]\:b=E(L(a)\:[n-1:m-1]\:L(b)),
\end{equation}
then for $m>1$, one obtains 
%% necessary to say ``for m>1'' because
%% repeatitions for m=1 are different than m>1. 
\begin{subequations}
\begin{gather}
a\:[n:m]\:b=\underbrace{a\:[n:m-1]\:(a\:[n:m-1]\:\dotsb (a\:[n:m-1]\:(a\:[n:m-1]\:a))\dotsb)}_{L^{n}(b)\ \text{copies of}\ a}\\
=E(\underbrace{L(a)\:[n-1:m-1]\:(L(a)\:[n-1:m-1]\:\dotsb (L(a)\:[n-1:m-1]\:(L(a)\:[n-1:m-1]\:L(a)))\dotsb)}_{L^{n}(b)\ \text{copies of}\ L(a)})\\
=E(L(a)\:[n-1:m]\:L(b)).
\end{gather}
\end{subequations}
\end{proof}
The following corollary states that the commutative hyperoperations from Definition~\ref{defn:ahyp} are regular hyperoperations for a given Peano system, as depicted in Definition~\ref{def:liverpool}.
\begin{corollary}\label{coro:cromwell}
Let $a,b\in\mathord{\stackrel{n}{\mathbb{N}}}$, then
\begin{equation}
a\:[n:2]\:b=F_{n+1}(a,b).
\end{equation}
\end{corollary}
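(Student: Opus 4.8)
The plan is a short induction on $n$, using Theorem~\ref{th:oxford} to strip off one layer of $E$ and then matching the result against the recursion defining $F$ in Definition~\ref{defn:ahyp}.

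For the base case $n=0$, the claim reduces to $a\:[0:2]\:b = F_1(a,b)$. By the identity $a\:[0:m]\:b = H_m(a,b)$ recorded in Definition~\ref{def:liverpool}, the left-hand side equals $H_2(a,b)$, and by Eq.~\eqref{eq:f1f2} the right-hand side equals $H_2(a,b)$ as well, so the base case is immediate and needs no calculation.

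For the inductive step, I would assume $a'\:[n-1:2]\:b' = F_n(a',b')$ for all $a',b'\in\mathord{\stackrel{n-1}{\mathbb{N}}}$, and fix $a,b\in\mathord{\stackrel{n}{\mathbb{N}}}$. Since $\mathord{\stackrel{n}{\mathbb{N}}} = E(\mathord{\stackrel{n-1}{\mathbb{N}}})$, both $L(a)$ and $L(b)$ lie in $\mathord{\stackrel{n-1}{\mathbb{N}}}$, so the hypothesis applies to them. Then I would chain three equalities: Theorem~\ref{th:oxford} with $m=2$ gives $a\:[n:2]\:b = E\bigl(L(a)\:[n-1:2]\:L(b)\bigr)$; the inductive hypothesis rewrites the inner product as $F_n(L(a),L(b))$; and the defining recursion for $F$ identifies $E\bigl(F_n(L(a),L(b))\bigr)$ with $F_{n+1}(a,b)$ — for $n+1\ge 3$ this is exactly the ``otherwise'' clause of Definition~\ref{defn:ahyp}, and for $n+1=2$ the clause $F_2(a,b)=E(H_2(L(a),L(b)))$ coincides with $E(F_1(L(a),L(b)))$ via Eq.~\eqref{eq:f1f2}. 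Composing the three equalities yields $a\:[n:2]\:b = F_{n+1}(a,b)$.

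No genuine obstacle is expected; the argument is essentially bookkeeping. The only point deserving a moment's attention is the seam at $n=1$, where $F_1$ is supplied by the separate convention $F_1 = H_2$ rather than by the $E$-conjugation formula, so one should check that the step's appeal to ``$F_{n+1} = E\circ F_n\circ(L\times L)$'' remains valid there — and it does, precisely because $F_2$ was defined as $E\circ H_2\circ(L\times L)$.
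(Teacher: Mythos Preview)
Your proof is correct and follows essentially the same induction as the paper's own argument: base case via $[0:2]=H_2=F_1$, then Theorem~\ref{th:oxford} plus the inductive hypothesis plus Definition~\ref{defn:ahyp} for the step. Your extra remark about the seam at $n=1$ is a welcome bit of care that the paper leaves implicit.
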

\begin{proof}
By induction.  Let $a,b\in\mathord{\stackrel{0}{\mathbb{N}}}$.  From Definitions~\ref{defn:ahyp} and \ref{def:liverpool} one obtains that $F_{1}(a,b)=H_{2}(a,b)$ and $a\:[0:2]\:b=H_{2}(a,b)$, therefore $a\:[0:2]\:b=F_{1}(a,b)$.  For  $a,b\in\mathord{\stackrel{n-1}{\mathbb{N}}}$, assume
\begin{equation}
a\:[n-1:2]\:b=F_{n}(a,b).
\end{equation}
Let $a,b\in\mathord{\stackrel{n}{\mathbb{N}}}$.  From Theorem~\ref{th:oxford} one obtains
\begin{subequations}
\begin{align}
a\:[n:2]\:b&=E(L(a)\:[n-1:2]\:L(b))\\
&=E(F_{n}(L(a),L(b)))\\
&=F_{n+1}(a,b).
\end{align}
\end{subequations}
\end{proof} 
\begin{lemma}\label{lem:cornell} 
%% ``natural counting''
Let $a,b\in\mathord{\stackrel{n}{\mathbb{N}}}$, then
\begin{equation}
a\:[n:1]\:b=\underbrace{a\:[n-1:1]\:\dotsb\:[n-1:1]\:a}_{L^{n-1}(b)\ \text{copies of}\ a}.
\end{equation}
\end{lemma}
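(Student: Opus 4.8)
The plan is to argue by induction on $n$, letting the recursion of Theorem~\ref{th:oxford} carry out the descent in the level while Proposition~\ref{prop:mrule} anchors the base case and the homomorphism behaviour of $E$ handles the iterated sum.

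\emph{Base case $n=1$.} Here $L^{0}(b)=b$, so the right-hand side is the iterated $[0:1]$-sum of $b$ copies of $a$, i.e.\ $\underbrace{H_{1}(a,H_{1}(\dotsb H_{1}(a,a)\dotsb))}_{b\ \text{copies of}\ a}=H_{2}(a,b)$ by the description of $H_{2}$ used in Proposition~\ref{prop:Hsemiring}. On the other hand Theorem~\ref{th:oxford} gives $a\:[1:1]\:b=E(L(a)\:[0:1]\:L(b))=E(H_{1}(L(a),L(b)))$, and since $a,b\in\mathord{\stackrel{1}{\mathbb{N}}}$ this equals $H_{2}(a,b)$ by Proposition~\ref{prop:mrule}. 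So the two sides agree.

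\emph{Inductive step.} The auxiliary fact I would isolate first is that $E$ is a homomorphism of the additive monoids along the sequence: setting $m=1$ in Theorem~\ref{th:oxford} and substituting $u=E(x)$, $v=E(y)$ with $x,y\in\mathord{\stackrel{n-2}{\mathbb{N}}}$ yields $E(x)\:[n-1:1]\:E(y)=E(x\:[n-2:1]\:y)$; because $[n-2:1]$ is closed on $\mathord{\stackrel{n-2}{\mathbb{N}}}$, this identity propagates through any finite $[n-2:1]$-sum, so $E$ sends the iterated $[n-2:1]$-sum of $k$ copies of an element $x\in\mathord{\stackrel{n-2}{\mathbb{N}}}$ to the iterated $[n-1:1]$-sum of $k$ copies of $E(x)$. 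Now assuming the lemma for $n-1$, Theorem~\ref{th:oxford} gives $a\:[n:1]\:b=E(L(a)\:[n-1:1]\:L(b))$; applying the inductive hypothesis to $L(a),L(b)\in\mathord{\stackrel{n-1}{\mathbb{N}}}$ rewrites the bracketed term as the iterated $[n-2:1]$-sum of $L^{n-2}(L(b))=L^{n-1}(b)$ copies of $L(a)$; pushing $E$ inside via the auxiliary fact and using $E(L(a))=a$ then produces exactly the iterated $[n-1:1]$-sum of $L^{n-1}(b)$ copies of $a$, which is the claim.

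The step I expect to demand the most care is purely the bookkeeping of membership: one must check $L(a),L(b)\in\mathord{\stackrel{n-1}{\mathbb{N}}}$, hence $L(a)\in\mathord{\stackrel{n-2}{\mathbb{N}}}$ via the descending filtration; that every partial sum occurring inside the inner iterated $[n-2:1]$-sum stays in $\mathord{\stackrel{n-2}{\mathbb{N}}}$, so the homomorphism identity for $E$ may be invoked at each nesting level; and that the multiplicity $L^{n-1}(b)$ lies in $\mathord{\stackrel{1}{\mathbb{N}}}$ and is therefore a genuine positive integer, so the underbrace is well defined. With those verifications in hand the remaining manipulations are a routine unwinding.
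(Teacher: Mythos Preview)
Your proposal is correct and follows essentially the same route as the paper: induction on $n$, with the base case handled via Theorem~\ref{th:oxford} together with Proposition~\ref{prop:mrule}, and the inductive step by applying Theorem~\ref{th:oxford} once, invoking the induction hypothesis on $L(a),L(b)\in\mathord{\stackrel{n-1}{\mathbb{N}}}$, and then pushing $E$ through the iterated $[n-2:1]$-sum. The only difference is expository: you isolate the homomorphism identity $E(x)\:[n-1:1]\:E(y)=E(x\:[n-2:1]\:y)$ and the membership bookkeeping explicitly, whereas the paper leaves both implicit in a single line.
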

\begin{proof}
By induction.  Let $a,b\in\mathord{\stackrel{1}{\mathbb{N}}}$.  From Theorem~\ref{th:oxford} and Proposition~\ref{prop:mrule}, one obtains
\begin{subequations}
\begin{align}
a\:[1:1]\:b&=E(L(a)\:[0:1]\:L(b))\\
&=E(H_{1}(L(a),L(b)))\\
&=H_{2}(a,b)\\
&=a\:[0:2]\:b\\
&=\underbrace{a\:[0:1]\:\dotsb\:[0:1]\:a}_{b\ \text{copies of}\ a}.
\end{align}
\end{subequations}
 For $a,b\in\mathord{\stackrel{n-1}{\mathbb{N}}}$, assume
\begin{equation}
a\:[n-1:1]\:b=\underbrace{a\:[n-2:1]\:\dotsb\:[n-2:1]\:a}_{L^{n-2}(b)\ \text{copies of}\ a}.
\end{equation}
For $a,b\in\mathord{\stackrel{n}{\mathbb{N}}}$, then
\begin{subequations}
\begin{align}
a\:[n:1]\:b&=E(L(a)\:[n-1:1]\:L(b))\\
&=E(\underbrace{L(a)\:[n-2:1]\:\dotsb\:[n-2:1]\:L(a)}_{L^{n-1}(b)\ \text{copies of}\ L(a)})\\
&=\underbrace{a\:[n-1:1]\:\dotsb\:[n-1:1]\:a}_{L^{n-1}(b)\ \text{copies of}\ a}.
\end{align}
\end{subequations}
\end{proof}
\begin{proposition}\label{prop:janeiro} 
Let $a,b\in\mathord{\stackrel{n+1}{\mathbb{N}}}$, then 
\begin{equation}
a\:[n:2]\:b=a\:[n+1:1]\:b.
\end{equation}
\end{proposition}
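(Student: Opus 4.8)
The plan is to show that, on $\mathord{\stackrel{n+1}{\mathbb{N}}}$, both $[n:2]$ and $[n+1:1]$ reduce to the single operation $F_{n+1}$ — the operation shared by the consecutive semirings $(\mathord{\stackrel{n}{\mathbb{N}}},F_n,F_{n+1})$ and $(\mathord{\stackrel{n+1}{\mathbb{N}}},F_{n+1},F_{n+2})$. Half of this is already available: Corollary~\ref{coro:cromwell} gives $a\:[n:2]\:b=F_{n+1}(a,b)$ for $a,b\in\mathord{\stackrel{n}{\mathbb{N}}}$, and since $\mathord{\stackrel{n+1}{\mathbb{N}}}\subset\mathord{\stackrel{n}{\mathbb{N}}}$ this also holds for $a,b\in\mathord{\stackrel{n+1}{\mathbb{N}}}$. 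So the substantive step is the ``shifted'' analogue of that corollary for the first regular hyperoperation, namely the auxiliary identity $c\:[m:1]\:d=F_m(c,d)$ valid for all $m\in\mathbb{N}$ and $c,d\in\mathord{\stackrel{m}{\mathbb{N}}}$, applied with $m=n+1$. Granting it, the proposition follows from the chain $a\:[n:2]\:b=F_{n+1}(a,b)=a\:[n+1:1]\:b$.

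I would prove the auxiliary identity by induction on $m$. For $m=0$ both sides equal $H_1(c,d)$, by the closing remark of Definition~\ref{def:liverpool} and by~\eqref{eq:f1f2}. For the inductive step, take $c,d\in\mathord{\stackrel{m+1}{\mathbb{N}}}$, so $L(c),L(d)\in\mathord{\stackrel{m}{\mathbb{N}}}$; Theorem~\ref{th:oxford} with hyperoperation index $1$ gives $c\:[m+1:1]\:d=E\bigl(L(c)\:[m:1]\:L(d)\bigr)$, the induction hypothesis rewrites the argument as $F_m(L(c),L(d))$, and finally $E\bigl(F_m(L(c),L(d))\bigr)=F_{m+1}(c,d)$. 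This last equality is the recursive clause of Definition~\ref{defn:ahyp} when $m\ge 1$; when $m=0$ it has to be read through~\eqref{eq:f1f2} (which fixes $F_0=H_1$ and $F_1=H_2$) and Proposition~\ref{prop:mrule}, which is precisely $H_2(c,d)=E(H_1(L(c),L(d)))$ on $\mathord{\stackrel{1}{\mathbb{N}}}$.

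An alternative, more combinatorial route avoids the auxiliary identity. By Lemma~\ref{lem:cornell}, $a\:[n+1:1]\:b$ is an iterated $[n:1]$-sum of $L^n(b)$ copies of $a$, where $L^n(b)$ is the index of $b$ inside $\mathord{\stackrel{n}{\mathbb{N}}}$; unrolling the recursion defining $[n:2]$, as in item~5 of the proof of Proposition~\ref{prop:Hsemiring}, likewise presents $a\:[n:2]\:b$ as an iterated $[n:1]$-sum of the same number of copies of $a$, with a possibly different parenthesization. Since $[n:1]$ is the additive operation of a commutative semiring it is associative, so the two expressions coincide. I would nonetheless take the first route, as it is shorter and reuses machinery already in place.

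The step I expect to be the real obstacle is not a computation but the bookkeeping at the seam $m=1$ (equivalently $n=0$): one must observe that the two competing descriptions of $F_1$ — the recursive one of Definition~\ref{defn:ahyp} and the stipulation $F_1=H_2$ of~\eqref{eq:f1f2} — agree on $\mathord{\stackrel{1}{\mathbb{N}}}$, which is exactly the content of Proposition~\ref{prop:mrule}, and one must track the nesting $\mathord{\stackrel{n+1}{\mathbb{N}}}\subset\mathord{\stackrel{n}{\mathbb{N}}}$ carefully so that every application of $L$, $E$, $F_k$, and $[k:1]$ stays within its declared domain.
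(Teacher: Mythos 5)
Your primary argument is correct, but it is not the paper's proof; interestingly, the ``alternative, more combinatorial route'' you sketch and then decline is essentially what the paper does. The paper's proof simply unrolls the recursion for $[n:2]$ into an iterated $[n:1]$-sum of $L^{n}(b)$ copies of $a$ and then invokes Lemma~\ref{lem:cornell} at level $n+1$ to recognize that sum as $a\:[n+1:1]\:b$ --- two lines, entirely inside the $[n:m]$ calculus, reusing a lemma already proved. Your preferred route instead passes through the commutative hyperoperations: $a\:[n:2]\:b=F_{n+1}(a,b)$ by Corollary~\ref{coro:cromwell}, and $a\:[n+1:1]\:b=F_{n+1}(a,b)$ by the auxiliary identity $c\:[m:1]\:d=F_{m}(c,d)$. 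That identity is exactly Proposition~\ref{prop:manchester}, which the paper states \emph{after} this proposition and derives \emph{from} it (together with Corollary~\ref{coro:cromwell}), so citing it would be circular; you correctly avoid this by giving an independent induction on $m$, and your handling of the seam at $m=0,1$ --- Theorem~\ref{th:oxford} for the descent, Eq.~\eqref{eq:f1f2} for $F_{0}=H_{1}$, $F_{1}=H_{2}$, and Proposition~\ref{prop:mrule} for $H_{2}(c,d)=E(H_{1}(L(c),L(d)))$ --- is exactly what is needed, and mirrors the base case of Lemma~\ref{lem:cornell}. In effect you invert the paper's logical order: you prove Proposition~\ref{prop:manchester} first and obtain Proposition~\ref{prop:janeiro} as a corollary, whereas the paper proves Proposition~\ref{prop:janeiro} from Lemma~\ref{lem:cornell} and then gets Proposition~\ref{prop:manchester} for free. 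Your route costs an extra induction that duplicates work the paper's ordering avoids, but it buys a conceptually transparent statement (both operations coincide because both are $F_{n+1}$, the operation shared by consecutive semirings) and it makes the dependency structure around Proposition~\ref{prop:manchester} explicit; the paper's route is shorter and needs no new machinery.
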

\begin{proof}
Let $a,b\in\mathord{\stackrel{n+1}{\mathbb{N}}}$.  From Lemma~\ref{lem:cornell} one obtains
\begin{subequations}
\begin{align}
a\:[n:2]\:b
&=\underbrace{a\:[n:1]\:\dotsb\:[n:1]\:a}_{L^{n}(b)\ \text{copies of}\ a}\\
&=a\:[n+1:1]\:b.
\end{align}
\end{subequations}
\end{proof}
\begin{proposition}\label{prop:3to2} 
Let $a\in\mathord{\stackrel{n+1}{\mathbb{N}}}$ and $b\in\mathord{\stackrel{n}{\mathbb{N}}}$, then
\begin{equation}
a\:[n:3]\:b=a\:[n+1:2]\:E(b).
\end{equation}
\end{proposition}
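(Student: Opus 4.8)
The plan is to fix $a\in\mathord{\stackrel{n+1}{\mathbb{N}}}$ and prove the identity by induction on $b$ with respect to the Peano structure $(\mathord{\stackrel{n}{\mathbb{N}}},\mathord{\stackrel{n}{0}},S_n)$ of Definition~\ref{def:seneca}. The idea is to unfold the recursion of Definition~\ref{def:liverpool} one step at a time, at level $n$ for the left-hand side and at level $n+1$ for the right-hand side, so that the two sides advance in lockstep; the only algebraic input beyond these unfoldings is Proposition~\ref{prop:janeiro}. Two bookkeeping facts will be used freely: the identity $\mathord{\stackrel{n}{1}}=\mathord{\stackrel{n+1}{0}}$, and the commutation $E\circ S_n=S_{n+1}\circ E$, which follows from Definition~\ref{def:seneca} since $S_{n+1}(E(b))=E(S_n(L(E(b))))=E(S_n(b))$ for every $n\in\mathbb{N}$.

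For the base case $b=\mathord{\stackrel{n}{0}}$, the clause ``$m\geq 3$ and $b=\mathord{\stackrel{n}{0}}$'' of Definition~\ref{def:liverpool} gives $a\:[n:3]\:\mathord{\stackrel{n}{0}}=\mathord{\stackrel{n}{1}}=\mathord{\stackrel{n+1}{0}}$, while $E(\mathord{\stackrel{n}{0}})=\mathord{\stackrel{n+1}{0}}$ and the clause ``$m=2$ and $b=\mathord{\stackrel{n+1}{0}}$'' gives $a\:[n+1:2]\:\mathord{\stackrel{n+1}{0}}=\mathord{\stackrel{n+1}{0}}$; so both sides equal $\mathord{\stackrel{n+1}{0}}$. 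For the inductive step, assume $a\:[n:3]\:b=a\:[n+1:2]\:E(b)$. Since $S_n(b)\neq\mathord{\stackrel{n}{0}}$, the ``otherwise'' clause of Definition~\ref{def:liverpool} at level $n$ yields $a\:[n:3]\:S_n(b)=a\:[n:2]\:(a\:[n:3]\:b)$, which the inductive hypothesis rewrites as $a\:[n:2]\:d$ with $d:=a\:[n+1:2]\:E(b)$. Now $d\in\mathord{\stackrel{n+1}{\mathbb{N}}}$, because $(\mathord{\stackrel{n+1}{\mathbb{N}}},[n+1:1],[n+1:2])$ is a commutative semiring by the proposition asserting this for all levels, so Proposition~\ref{prop:janeiro} applies to $a,d\in\mathord{\stackrel{n+1}{\mathbb{N}}}$ and gives $a\:[n:2]\:d=a\:[n+1:1]\:d$. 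Reading the ``otherwise'' clause of Definition~\ref{def:liverpool} at level $n+1$ in the opposite direction (noting $S_{n+1}(E(b))\neq\mathord{\stackrel{n+1}{0}}$) one has $a\:[n+1:1]\:(a\:[n+1:2]\:E(b))=a\:[n+1:2]\:S_{n+1}(E(b))$, and then $S_{n+1}(E(b))=E(S_n(b))$ closes the step: $a\:[n:3]\:S_n(b)=a\:[n+1:2]\:E(S_n(b))$.

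I expect the main obstacle to be the level-bookkeeping rather than any computation. One must verify that the intermediate element $d=a\:[n+1:2]\:E(b)$ genuinely lies in $\mathord{\stackrel{n+1}{\mathbb{N}}}$ --- otherwise Proposition~\ref{prop:janeiro} could not be invoked to trade the level-$n$ product $a\:[n:2]\:d$ for the level-$(n+1)$ sum $a\:[n+1:1]\:d$ --- and that successors and identities are correctly matched across the Peano systems $(\mathord{\stackrel{n}{\mathbb{N}}},\mathord{\stackrel{n}{0}},S_n)$ and $(\mathord{\stackrel{n+1}{\mathbb{N}}},\mathord{\stackrel{n+1}{0}},S_{n+1})$, which is exactly what $\mathord{\stackrel{n}{1}}=\mathord{\stackrel{n+1}{0}}$ and $E\circ S_n=S_{n+1}\circ E$ encode. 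It is tempting to hit the left-hand side directly with Theorem~\ref{th:oxford}, but that descends to level $n-1$ and would force $n\geq 1$; routing instead through Proposition~\ref{prop:janeiro}, which holds for all $n\in\mathbb{N}$, keeps the argument uniform and absorbs the case $n=0$ with no special treatment.
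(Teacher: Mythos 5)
Your proof is correct and rests on the same key ingredient as the paper's: Proposition~\ref{prop:janeiro} is used to trade the level-$n$ product $a\:[n:2]\:d$ for the level-$(n+1)$ sum $a\:[n+1:1]\:d$ inside the recursion for $[n:3]$. The paper carries this out as a single informal expansion of $a\:[n:3]\:b$ into $L^{n}(b)$ copies of $a$ under $[n:2]$, swapping each for $[n+1:1]$ and refolding into $a\:[n+1:2]\:E(b)$; your structural induction on $b$ is essentially the rigorous, one-step-at-a-time version of that same argument, with the base case $b=\mathord{\stackrel{n}{0}}$ and the bookkeeping identities $\mathord{\stackrel{n}{1}}=\mathord{\stackrel{n+1}{0}}$ and $E\circ S_{n}=S_{n+1}\circ E$ made explicit rather than absorbed into the copy-counting.
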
 
\begin{proof}
From Proposition~\ref{prop:janeiro}, one obtains
\begin{subequations}
\begin{align}
a\:[n:3]\:b&=\underbrace{a\:[n:2]\:\dotsb\:[n:2]\:a}_{L^{n}(b)\ \text{copies of}\ a}\\
&=\underbrace{a\:[n+1:1]\:\dotsb\:[n+1:1]\:a}_{L^{n}(b)\ \text{copies of}\ a}\\
&=a\:[n+1:2]\:E(b).
\end{align}
\end{subequations}
\end{proof}
\begin{proposition}\label{prop:manchester}
Let $a,b\in\mathord{\stackrel{n}{\mathbb{N}}}$, then
\begin{equation}\label{eq:manchester}
a\:[n:1]\:b=F_{n}(a,b).
\end{equation}
\end{proposition}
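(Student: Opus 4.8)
The plan is to avoid reopening the recursion defining $F_n$ and instead glue together two facts already established: that on the overlap $\mathord{\stackrel{n}{\mathbb{N}}}$ the level-$(n-1)$ multiplication $[n-1:2]$ coincides with the level-$n$ addition $[n:1]$, and that this same multiplication has already been identified with the commutative hyperoperation. First I would dispose of the base case $n=0$: by the closing remark of Definition~\ref{def:liverpool} one has $a\:[0:1]\:b=H_1(a,b)$, and by Equation~\eqref{eq:f1f2} one has $F_0(a,b)=H_1(a,b)$, hence $a\:[0:1]\:b=F_0(a,b)$ on $\mathord{\stackrel{0}{\mathbb{N}}}$.

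For $n\geq 1$ I would write $n=m+1$ and take $a,b\in\mathord{\stackrel{m+1}{\mathbb{N}}}$. Since the filtration is descending, $\mathord{\stackrel{m+1}{\mathbb{N}}}\subset\mathord{\stackrel{m}{\mathbb{N}}}$, so Corollary~\ref{coro:cromwell} applies at level $m$ and gives $a\:[m:2]\:b=F_{m+1}(a,b)$; meanwhile Proposition~\ref{prop:janeiro}, whose hypothesis also requires both arguments in $\mathord{\stackrel{m+1}{\mathbb{N}}}$, gives $a\:[m:2]\:b=a\:[m+1:1]\:b$. Chaining the two equalities yields $a\:[m+1:1]\:b=F_{m+1}(a,b)$, i.e. $a\:[n:1]\:b=F_n(a,b)$, which together with the base case proves the proposition.

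If one prefers a self-contained induction, the inductive step runs as follows: assuming $a\:[n-1:1]\:b=F_{n-1}(a,b)$ on $\mathord{\stackrel{n-1}{\mathbb{N}}}$, rewrite $a\:[n:1]\:b=E(L(a)\:[n-1:1]\:L(b))$ by Theorem~\ref{th:oxford}, apply the inductive hypothesis (legitimate since $L$ maps $\mathord{\stackrel{n}{\mathbb{N}}}$ into $\mathord{\stackrel{n-1}{\mathbb{N}}}$) to reach $E(F_{n-1}(L(a),L(b)))$, and invoke the defining clause of $F_n$ for $n\geq 2$; the residual case $n=1$ is precisely Proposition~\ref{prop:mrule} once $F_0=H_1$ and $F_1=H_2$ are used. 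Either way, the one point demanding attention, and hence the real obstacle, is the bookkeeping of domains: each of Corollary~\ref{coro:cromwell}, Proposition~\ref{prop:janeiro}, Theorem~\ref{th:oxford} and Proposition~\ref{prop:mrule} carries a hypothesis pinning the arguments to a specific $\mathord{\stackrel{k}{\mathbb{N}}}$, so one must check that the inclusions $\mathord{\stackrel{n}{\mathbb{N}}}\subset\mathord{\stackrel{n-1}{\mathbb{N}}}$ make every application valid; there is no computational content beyond that.
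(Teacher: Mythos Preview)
Your proposal is correct and matches the paper's own proof essentially verbatim: the paper handles $n=0$ via $F_0=H_1=[0{:}1]$ and for $n\geq 1$ simply chains Corollary~\ref{coro:cromwell} and Proposition~\ref{prop:janeiro} exactly as you do, relying on the inclusion $\mathord{\stackrel{n}{\mathbb{N}}}\subset\mathord{\stackrel{n-1}{\mathbb{N}}}$. Your alternative induction via Theorem~\ref{th:oxford} is also valid but is not the route the paper takes.
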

\begin{proof}
Let $a,b\in\mathord{\stackrel{0}{\mathbb{N}}}$.  From Definitions~\ref{defn:ahyp} and \ref{def:liverpool} one obtains that $F_{0}(a,b)=H_{1}(a,b)$ and $a\:[0:1]\:b=H_{1}(a,b)$, therefore $a\:[0:1]\:b=F_{0}(a,b)$.  For $a,b\in\mathord{\stackrel{n}{\mathbb{N}}}$ with $n\in\mathbb{N}\setminus\{0\}$, the Eq.~\eqref{eq:manchester} follows from Corollary~\ref{coro:cromwell} and Proposition~\ref{prop:janeiro}.
\end{proof}
The sequence $\{(\mathord{\stackrel{n}{\mathbb{N}}},\mathord{\stackrel{n}{0}},S_{n})\}_{n\in\mathbb{N}}$ is generated from the system $(\mathord{\stackrel{0}{\mathbb{N}}},\mathord{\stackrel{0}{0}},S_{0})$ via the exponential function $E$.  In general, a Peano system induces an exponential semiring from its regular hyperoperations.  The exponential function of this semiring is used to generate a sequence of Peano systems.  This is a recursive procedure.  Identifying a new sequence requires recording previous choices of generating systems.
\begin{notation}
The index of a sequence of Peano systems of level $k$, is an element of the set
\begin{equation}
I_{{\bf n}_{k-1}}=\left\{\begin{array}{ll}
\mathbb{N},&\text{if}\ k=1;\\
\{({\bf n}_{k-1},0),({\bf n}_{k-1},1),({\bf n}_{k-1},2),\dotsc\},&\text{otherwise};
\end{array}\right.
\end{equation}
where ${\bf n}_{k-1}=(n_{1},n_{2},\dotsc,n_{k-1})$ is a fixed element in $\mathbb{N}^{k-1}$, recording previous choices of Peano systems.  For $k>1$, let ${\bf n}_{k}=({\bf n}_{k-1},n_{k})\in I_{{\bf n}_{k-1}}$ and $l\in\mathbb{Z}$.  Whenever $n_{k}+l\in\mathbb{N}$, denote
\begin{equation}
{\bf n}_{k}+l=({\bf n}_{k-1},n_{k}+l).
\end{equation}
\end{notation}
The present proposal to generate new sequences of Peano systems can be visualized as a binary tree diagram, see Fig.~\ref{fig:diag}.
\begin{figure}[h]
\centering
\includegraphics[width=0.5\textwidth]{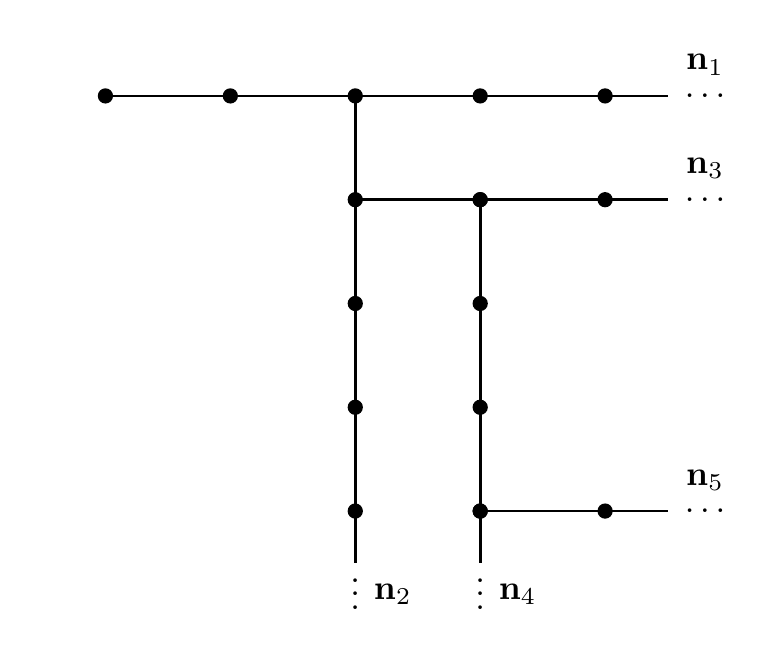}
\caption{The diagram shows five sequences of Peano systems.  In particular, the sequence with index ${\bf n}_{5}$ is obtained after the choices: $n_{1}=2,n_{2}=1,n_{3}=1$ and $n_{4}=3$.}
\label{fig:diag}
\end{figure}
The initial objects in the recursion are 
\begin{align}%% \quad is the spacing in text mode
\mathord{\stackrel{{\bf n}_{0}}{\mathbb{N}}}=\mathbb{N},\quad
S_{{\bf n}_{0}}=S,\quad
E_{{\bf n}_{0}}=E,\quad
L_{{\bf n}_{0}}=L.
\end{align} 
\begin{definition}
For $k>0$, assume knowledge of $\mathord{\stackrel{{\bf n}_{k-1}}{\mathbb{N}}},S_{{\bf n}_{k-1}}, E_{{\bf n}_{k-1}},L_{{\bf n}_{k-1}}$.  Define the following sets
\begin{equation}
\mathord{\stackrel{{\bf n}_{k}}{\mathbb{N}}}
=\left\{\begin{array}{ll}
\mathbb{N},&\text{if}\ k=0;\\
\mathord{\stackrel{{\bf n}_{k-1}}{\mathbb{N}}},&\text{if}\ k> 0\ \text{and}\ n_{k}=0;\\
E_{{\bf n}_{k-1}}(\mathord{\stackrel{{\bf n}_{k}-1}{\mathbb{N}}}),&\text{otherwise}.
\end{array}\right. 
\end{equation}
\end{definition}
\begin{remark}
The function $E_{{\bf n}_{k-1}}$ generates a descending filtration in $\mathord{\stackrel{{\bf n}_{k}}{\mathbb{N}}}$, meaning $\mathord{\stackrel{{\bf n}_{k}+1}{\mathbb{N}}}\subset\mathord{\stackrel{{\bf n}_{k}}{\mathbb{N}}}$.  Further note that $\mathord{\stackrel{{\bf n}_{k}}{\mathbb{N}}}\subset\mathord{\stackrel{{\bf n}_{k-1}}{\mathbb{N}}}$.
\end{remark}
\begin{definition}
Define the successor function in $\mathord{\stackrel{{\bf n}_{k}}{\mathbb{N}}}$ as the recursive function
\begin{equation}
S_{{\bf n}_{k}}:\mathord{\stackrel{{\bf n}_{k}}{\mathbb{N}}}\rightarrow\mathord{\stackrel{{\bf n}_{k}}{\mathbb{N}}}\setminus\{\mathord{\stackrel{{\bf n}_{k}}{0}}\}:S_{{\bf n}_{k}}(a)
=\left\{\begin{array}{ll}
S(a),&\text{if}\ k=0;\\
S_{{\bf n}_{k-1}}(a),&\text{if}\ k>0\ \text{and}\ n_{k}=0;\\
E_{{\bf n}_{k-1}}(S_{{\bf n}_{k}-1}(L_{{\bf n}_{k-1}}(a))),&\text{otherwise}.
\end{array}\right. 
\end{equation}
Denote its inverse function as $S_{{\bf n}_{k}}^{-1}$. 
\end{definition}
\begin{definition}\label{def:hyp:gen-2}
 The sequence of regular hyperoperations generated from the 
 succesor function in $\mathord{\stackrel{{\bf n}_{k}}{\mathbb{N}}}$ is the sequence of binary operations $[{\bf n}_{k}:m]:(\mathord{\stackrel{{\bf n}_{k}}{\mathbb{N}}})^{2}\rightarrow\mathord{\stackrel{{\bf n}_{k}}{\mathbb{N}}}$ for $m\in\mathbb{N}$, defined recursively as
 \begin{equation}
 a\:[{\bf n}_{k}:m]\:b=\left\{\begin{array}{ll}
 S_{{\bf n}_{k}}(b),&\text{if}\ m=0;\\ 
 a,&\text{if}\ m=1\ \text{and}\ b=\mathord{\stackrel{{\bf n}_{k}}{0}};\\
 \mathord{\stackrel{{\bf n}_{k}}{0}},&\text{if}\ m=2\ \text{and}\ b=\mathord{\stackrel{{\bf n}_{k}}{0}};\\
 \mathord{\stackrel{{\bf n}_{k}}{1}},&\text{if}\ m\geq 3\ \text{and}\ b=\mathord{\stackrel{{\bf n}_{k}}{0}};\\
 a\:[{\bf n}_{k}:m-1]\:(a\:[{\bf n}_{k}:m]\:S_{{\bf n}_{k}}^{-1}(b)),&\text{otherwise}.
 \end{array}\right.
 \end{equation}
 Note that if $n_{k}=0$, then $[{\bf n}_{k}:m]=[{\bf n}_{k-1}:m]$.
\end{definition}
\begin{remark}
 From Definitions~\ref{defn:hyp} and \ref{def:liverpool} one obtains that $[{\bf n}_{0}:m]=H_{m}$ and $[{\bf n}_{1}:m]=[n:m]$, where ${\bf n}_{1}=n$.
\end{remark}
\begin{definition}\label{def:Ebold-n}
Define the function $E_{{\bf n}_{k}}:\mathord{\stackrel{{\bf n}_{k}}{\mathbb{N}}}\rightarrow\mathord{\stackrel{{\bf n}_{k}}{\mathbb{N}}}:E_{{\bf n}_{k}}(a)=w_{k}\:[{\bf n}_{k}:3]\:a$, for a given base $w_{k}\in\mathord{\stackrel{{\bf n}_{k}}{\mathbb{N}}}\setminus\{\mathord{\stackrel{{\bf n}_{k}}{0}},\mathord{\stackrel{{\bf n}_{k}}{1}}\}$.  Denote its inverse function as $L_{{\bf n}_{k}}$.
\end{definition}
\begin{remark}
The tuple $(\mathord{\stackrel{{\bf n}_{k}}{\mathbb{N}}},[{\bf n}_{k}:1],[{\bf n}_{k}:2])$ is a semiring with additive identity $\mathord{\stackrel{{\bf n}_{k}}{0}}$ and multiplicative identity $\mathord{\stackrel{{\bf n}_{k}}{1}}$.  The semiring isomorphism from $(\mathord{\stackrel{{\bf n}_{0}}{\mathbb{N}}},[{\bf n}_{0}:1],[{\bf n}_{0}:2])$ to $(\mathord{\stackrel{{\bf n}_{k}}{\mathbb{N}}},[{\bf n}_{k}:1],[{\bf n}_{k}:2])$ takes the form
\begin{equation}
E_{{\bf n}_{k-1}}^{n_{k}}\circ\dotsb\circ E_{{\bf n}_{1}}^{n_{2}}\circ E_{{\bf n}_{0}}^{n_{1}},
\end{equation}
where $\circ$ stands for function composition and $E_{{\bf n}_{j-1}}^{n_{j}}$ stands for \textit{apply $n_{j}$-times the function $E_{{\bf n}_{j-1}}$}.
\end{remark}
The following propositions involve relations between hyperoperations with index ${\bf n}_{k}$ and ${\bf n}_{k-1}$.
\begin{proposition}\label{prop:canada}
Let  $a,b\in\mathord{\stackrel{{\bf n}_{k}}{\mathbb{N}}}$, then
\begin{equation}
a\:[{\bf n}_{k}:m]\:b=E^{n_{k}}_{{\bf n}_{k-1}}(L^{n_{k}}_{{\bf n}_{k-1}}(a)\:[{\bf n}_{k-1}:m]\:L^{n_{k}}_{{\bf n}_{k-1}}(b)).
\end{equation}
\end{proposition}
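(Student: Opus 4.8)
The plan is to prove Proposition~\ref{prop:canada} by induction on $n_{k}$, mimicking the strategy already used for Theorem~\ref{th:oxford} (which is exactly the case $n_{k}=1$, with ${\bf n}_{k-1}$ playing the role of the ambient index). The base case $n_{k}=0$ is immediate: when $n_{k}=0$ we have $\mathord{\stackrel{{\bf n}_{k}}{\mathbb{N}}}=\mathord{\stackrel{{\bf n}_{k-1}}{\mathbb{N}}}$, the successor functions coincide ($S_{{\bf n}_{k}}=S_{{\bf n}_{k-1}}$), and hence $[{\bf n}_{k}:m]=[{\bf n}_{k-1}:m]$ by the remark following Definition~\ref{def:hyp:gen-2}, while $E^{0}_{{\bf n}_{k-1}}=L^{0}_{{\bf n}_{k-1}}=\mathrm{id}$; so both sides agree.

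For the inductive step I would actually run a \emph{single-step} version first: establish
\begin{equation}
a\:[{\bf n}_{k}:m]\:b=E_{{\bf n}_{k}-1}\bigl(L_{{\bf n}_{k}-1}(a)\:[{\bf n}_{k}-1:m]\:L_{{\bf n}_{k}-1}(b)\bigr)
\end{equation}
for $a,b\in\mathord{\stackrel{{\bf n}_{k}}{\mathbb{N}}}$ (the analogue of Theorem~\ref{th:oxford}, one level of exponentiation peeled off), and then compose these one-step identities $n_{k}$ times to telescope into $E^{n_{k}}_{{\bf n}_{k-1}}$ and $L^{n_{k}}_{{\bf n}_{k-1}}$. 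The one-step identity is itself proved by induction on $m$: for $m=0$ it is the definition of $S_{{\bf n}_{k}}$ together with $S_{{\bf n}_{k}}=E_{{\bf n}_{k}-1}\circ S_{{\bf n}_{k}-1}\circ L_{{\bf n}_{k}-1}$; for $m=1$ one unfolds the iterated-successor definition exactly as in the $m=1$ case of Theorem~\ref{th:oxford}, using $S_{{\bf n}_{k}}^{j}=E_{{\bf n}_{k}-1}\circ S_{{\bf n}_{k}-1}^{j}\circ L_{{\bf n}_{k}-1}$; and for $m>1$ one expands $a\:[{\bf n}_{k}:m]\:b$ as an iterated $[{\bf n}_{k}:m-1]$-product of $L^{\,\cdot}(b)$ copies of $a$, pushes $E_{{\bf n}_{k}-1}$ through each factor by the inductive hypothesis on $m$, and recognizes the result as $E_{{\bf n}_{k}-1}$ applied to the corresponding iterated $[{\bf n}_{k}-1:m]$-product of $L_{{\bf n}_{k}-1}(a)$.

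The main obstacle I anticipate is purely bookkeeping rather than conceptual: one must be careful that the iterated successor $S_{{\bf n}_{k}}^{-1}(b)$ appearing in the ``otherwise'' clause is interpreted in $\mathord{\stackrel{{\bf n}_{k}}{\mathbb{N}}}$ and that the number of iterations ($L^{n_{k}}_{{\bf n}_{k-1}}(b)$, a plain natural number) is the same on both sides of each identity — this is where the descending-filtration remark ($\mathord{\stackrel{{\bf n}_{k}}{\mathbb{N}}}\subset\mathord{\stackrel{{\bf n}_{k-1}}{\mathbb{N}}}\subset\dotsb\subset\mathbb{N}$) and the conjugation relations among the $S$'s are doing the real work. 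Once the one-step identity holds, the composition is routine: writing $a_{j}=L^{j}_{{\bf n}_{k-1}}(a)$ and similarly for $b$, applying the one-step identity $n_{k}$ times yields
\begin{equation}
a\:[{\bf n}_{k}:m]\:b=E^{n_{k}}_{{\bf n}_{k-1}}\bigl(L^{n_{k}}_{{\bf n}_{k-1}}(a)\:[{\bf n}_{k-1}:m]\:L^{n_{k}}_{{\bf n}_{k-1}}(b)\bigr),
\end{equation}
which is the claim; the case $n_{k}=0$ handled above supplies the start of the telescoping.
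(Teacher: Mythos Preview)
Your approach is essentially the same as the paper's: establish a one-step conjugation identity (the analogue of Theorem~\ref{th:oxford}) and then telescope it $n_{k}$ times down to $[({\bf n}_{k-1},0):m]=[{\bf n}_{k-1}:m]$. There is, however, a notational slip that matters: the exponential and logarithm in the one-step identity and in the successor-conjugation formula are $E_{{\bf n}_{k-1}}$ and $L_{{\bf n}_{k-1}}$ (indexed by the \emph{prefix} ${\bf n}_{k-1}$), not $E_{{\bf n}_{k}-1}$ and $L_{{\bf n}_{k}-1}$ as you wrote. The paper's recursion for the successor is $S_{{\bf n}_{k}}=E_{{\bf n}_{k-1}}\circ S_{{\bf n}_{k}-1}\circ L_{{\bf n}_{k-1}}$, and it is the \emph{same} map $E_{{\bf n}_{k-1}}$ that is applied at every step of the telescope (which is why the composite is $E^{n_{k}}_{{\bf n}_{k-1}}$). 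With that subscript corrected, your argument is exactly the paper's proof.
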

\begin{proof}
Let $a,b\in\mathord{\stackrel{{\bf n}_{k}}{\mathbb{N}}}$.  In analogy to Theorem~\ref{th:oxford} one obtains
\begin{equation}
a\:[{\bf n}_{k}:m]\:b=E_{{\bf n}_{k-1}}(L_{{\bf n}_{k-1}}(a)\:[{\bf n}_{k}-1:m]\:L_{{\bf n}_{k-1}}(b)).
\end{equation}
Similarly, one obtains
\begin{equation}
L_{{\bf n}_{k-1}}(a)\:[{\bf n}_{k}-1:m]\:L_{{\bf n}_{k-1}}(b)=E_{{\bf n}_{k-1}}(L^{2}_{{\bf n}_{k-1}}(a)\:[{\bf n}_{k}-2:m]\:L^{2}_{{\bf n}_{k-1}}(b)),
\end{equation}
therefore,
\begin{equation}
a\:[{\bf n}_{k}:m]\:b=E^{2}_{{\bf n}_{k-1}}(L^{2}_{{\bf n}_{k-1}}(a)\:[{\bf n}_{k}-2:m]\:L^{2}_{{\bf n}_{k-1}}(b)).
\end{equation}
Repeating this procedure, one obtains
\begin{subequations}
\begin{align}
a\:[{\bf n}_{k}:m]\:b&=E^{n_{k}}_{{\bf n}_{k-1}}(L^{n_{k}}_{{\bf n}_{k-1}}(a)\:[{\bf n}_{k}-n_{k}:m]\:L^{n_{k}}_{{\bf n}_{k-1}}(b))\\
&=E^{n_{k}}_{{\bf n}_{k-1}}(L^{n_{k}}_{{\bf n}_{k-1}}(a)\:[({\bf n}_{k-1},0):m]\:L^{n_{k}}_{{\bf n}_{k-1}}(b))\\
&=E^{n_{k}}_{{\bf n}_{k-1}}(L^{n_{k}}_{{\bf n}_{k-1}}(a)\:[{\bf n}_{k-1}:m]\:L^{n_{k}}_{{\bf n}_{k-1}}(b)).
\end{align}
\end{subequations}
\end{proof}
\begin{proposition}\label{prop:august}
Let  $a\in\mathord{\stackrel{{\bf n}_{k}}{\mathbb{N}}}$.  If $w_{k}=w_{k-1}\:[{\bf n}_{k}:4]\:(n_{k}+1)$, then
\begin{equation}
E_{{\bf n}_{k}}(a)=E_{{\bf n}_{k-1}}(a).
\end{equation}
\end{proposition}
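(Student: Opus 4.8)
The plan is to reduce the claim, by a single application of Proposition~\ref{prop:canada}, to the identity $w_{k}=E^{n_{k}}_{{\bf n}_{k-1}}(w_{k-1})$, and then to read that identity off the hypothesis by unfolding the tetration. First I would rewrite $E_{{\bf n}_{k}}(a)=w_{k}\:[{\bf n}_{k}:3]\:a$ via Proposition~\ref{prop:canada} with $m=3$, getting
\[
E_{{\bf n}_{k}}(a)=E^{n_{k}}_{{\bf n}_{k-1}}\bigl(L^{n_{k}}_{{\bf n}_{k-1}}(w_{k})\:[{\bf n}_{k-1}:3]\:L^{n_{k}}_{{\bf n}_{k-1}}(a)\bigr).
\]
Granting for the moment that $L^{n_{k}}_{{\bf n}_{k-1}}(w_{k})=w_{k-1}$, i.e.\ that $w_{k}=E^{n_{k}}_{{\bf n}_{k-1}}(w_{k-1})$, the inner factor is $w_{k-1}\:[{\bf n}_{k-1}:3]\:L^{n_{k}}_{{\bf n}_{k-1}}(a)=E_{{\bf n}_{k-1}}\bigl(L^{n_{k}}_{{\bf n}_{k-1}}(a)\bigr)$ by Definition~\ref{def:Ebold-n}; since $E_{{\bf n}_{k-1}}$ commutes with its own iterates and $E^{n_{k}}_{{\bf n}_{k-1}}\circ L^{n_{k}}_{{\bf n}_{k-1}}$ is the identity on $\mathord{\stackrel{{\bf n}_{k}}{\mathbb{N}}}$, the right-hand side collapses to $E_{{\bf n}_{k-1}}\bigl(E^{n_{k}}_{{\bf n}_{k-1}}(L^{n_{k}}_{{\bf n}_{k-1}}(a))\bigr)=E_{{\bf n}_{k-1}}(a)$, which is the assertion. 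So everything reduces to $w_{k}=E^{n_{k}}_{{\bf n}_{k-1}}(w_{k-1})$.

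To establish this I would unfold the hypothesis $w_{k}=w_{k-1}\:[{\bf n}_{k}:4]\:(n_{k}+1)$ by the recursion of Definition~\ref{def:hyp:gen-2}, which exhibits $w_{k}$ as a power tower of $n_{k}+1$ copies of $w_{k-1}$ under $[{\bf n}_{k}:3]$, and then induct on the height $j$: the subtower formed by the innermost $j+1$ copies of $w_{k-1}$ equals $E^{j}_{{\bf n}_{k-1}}(w_{k-1})$. For $j=0$ this subtower is just $w_{k-1}$; for the inductive step one peels one more copy of $w_{k-1}$ off the bottom and rewrites the newly outermost $[{\bf n}_{k}:3]$ by Proposition~\ref{prop:canada} as one further application of $E_{{\bf n}_{k-1}}$ (to an occurrence of $[{\bf n}_{k-1}:3]$), turning the subtower of $j+2$ copies into $w_{k-1}\:[{\bf n}_{k-1}:3]\:E^{j}_{{\bf n}_{k-1}}(w_{k-1})=E^{j+1}_{{\bf n}_{k-1}}(w_{k-1})$. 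Taking $j=n_{k}$ gives $w_{k}=E^{n_{k}}_{{\bf n}_{k-1}}(w_{k-1})$ and closes the proof.

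The delicate point --- essentially the only obstacle --- is the bookkeeping of which set each intermediate object lives in: the subtower of $j+1$ copies of $w_{k-1}$ belongs to $\mathord{\stackrel{({\bf n}_{k-1},j)}{\mathbb{N}}}$ and enters $\mathord{\stackrel{{\bf n}_{k}}{\mathbb{N}}}$ only when $j$ reaches $n_{k}$, so the symbols $[{\bf n}_{k}:3]$ and $[{\bf n}_{k}:4]$ in the hypothesis must be read through the conjugation formula underlying Proposition~\ref{prop:canada} (the same extension device used for $H_{4}$ at negative heights in Eq.~\eqref{eq:tetration}), and each rewriting step of the induction must be performed at the correct intermediate index $({\bf n}_{k-1},j)$, not uniformly at ${\bf n}_{k}$. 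Once this is arranged, both the reduction and the induction are routine manipulations of the conjugation identity.
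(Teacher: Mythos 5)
Your proof follows essentially the same route as the paper's: apply Proposition~\ref{prop:canada} with $m=3$, use the hypothesis to replace $L^{n_{k}}_{{\bf n}_{k-1}}(w_{k})$ by $w_{k-1}$, recognize the inner term as $E_{{\bf n}_{k-1}}(L^{n_{k}}_{{\bf n}_{k-1}}(a))$ via Definition~\ref{def:Ebold-n}, and collapse the iterates of $E_{{\bf n}_{k-1}}$. The only difference is that you justify explicitly, by induction on the tower height (with the attendant index bookkeeping), the identification $w_{k}=E^{n_{k}}_{{\bf n}_{k-1}}(w_{k-1})$, which the paper treats as immediate from the hypothesis --- a correct elaboration rather than a different approach.
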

\begin{proof}
Let  $a\in\mathord{\stackrel{{\bf n}_{k}}{\mathbb{N}}}$.  From Proposition~\ref{prop:canada} one obtains
\begin{subequations}
\begin{align}
E_{{\bf n}_{k}}(a)&=w_{k}\:[{\bf n}_{k}:3]\:a\\
&=E^{n_{k}}_{{\bf n}_{k-1}}(L^{n_{k}}_{{\bf n}_{k-1}}(w_{k})\:[{\bf n}_{k-1}:3]\:L^{n_{k}}_{{\bf n}_{k-1}}(a)).
\end{align}
\end{subequations}
If $w_{k}=w_{k-1}\:[{\bf n}_{k}:4]\:(n_{k}+1)$, then
\begin{subequations}
\begin{align}
E_{{\bf n}_{k}}(a)&=E^{n_{k}}_{{\bf n}_{k-1}}(w_{k-1}\:[{\bf n}_{k-1}:3]\:L^{n_{k}}_{{\bf n}_{k-1}}(a))\\
&=E^{n_{k}}_{{\bf n}_{k-1}}(E_{{\bf n}_{k-1}}(L^{n_{k}}_{{\bf n}_{k-1}}(a)))\\
&=E_{{\bf n}_{k-1}}(a).
\end{align}
\end{subequations}
\end{proof}
\begin{proposition}
Let  $a,b\in\mathord{\stackrel{{\bf n}_{k}}{\mathbb{N}}}$.  If $w_{k-1}=w_{k-2}\:[{\bf n}_{k-1}:4]\:(n_{k-1}+1)$, then
\begin{equation}
\mathord{\stackrel{{\bf n}_{k}}{\mathbb{N}}}=\mathord{\stackrel{{\bf n}_{k-1}+n_{k}}{\mathbb{N}}}\quad
\text{and}\quad
a\:[{\bf n}_{k}:m]\:b=a\:[{\bf n}_{k-1}+n_{k}:m]\:b.
\end{equation}
\end{proposition}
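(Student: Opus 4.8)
The plan is to reduce the whole statement to one observation — that under the hypothesis the generating functions $E_{{\bf n}_{k-1}}$ and $E_{{\bf n}_{k-2}}$ coincide — and then to propagate this through the recursive definitions of the sets $\mathord{\stackrel{{\bf n}_{k}}{\mathbb{N}}}$ and of the hyperoperations $[{\bf n}_{k}:m]$. Throughout one has $k\geq 2$, so that ${\bf n}_{k-2}$ and the tetration $[{\bf n}_{k-1}:4]$ appearing in the hypothesis are defined and ${\bf n}_{k-1}=({\bf n}_{k-2},n_{k-1})$.

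First I would observe that the hypothesis $w_{k-1}=w_{k-2}\:[{\bf n}_{k-1}:4]\:(n_{k-1}+1)$ is exactly the hypothesis of Proposition~\ref{prop:august} with the level lowered by one; it therefore yields $E_{{\bf n}_{k-1}}(a)=E_{{\bf n}_{k-2}}(a)$ for every $a\in\mathord{\stackrel{{\bf n}_{k-1}}{\mathbb{N}}}$. Since $E_{{\bf n}_{k-1}}$ maps $\mathord{\stackrel{{\bf n}_{k-1}}{\mathbb{N}}}$ into itself, iterating gives $E^{n_{k}}_{{\bf n}_{k-1}}=E^{n_{k}}_{{\bf n}_{k-2}}$ on $\mathord{\stackrel{{\bf n}_{k-1}}{\mathbb{N}}}$, and, passing to inverses along the filtration $\mathord{\stackrel{{\bf n}_{k-1}+j}{\mathbb{N}}}\subset\mathord{\stackrel{{\bf n}_{k-1}}{\mathbb{N}}}$, also $L^{j}_{{\bf n}_{k-1}}=L^{j}_{{\bf n}_{k-2}}$ on $\mathord{\stackrel{{\bf n}_{k-1}+j}{\mathbb{N}}}$.

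For the set identity, I would unwind the recursive definition of these sets to $\mathord{\stackrel{{\bf n}_{k}}{\mathbb{N}}}=E^{n_{k}}_{{\bf n}_{k-1}}(\mathord{\stackrel{{\bf n}_{k-1}}{\mathbb{N}}})$ and $\mathord{\stackrel{{\bf n}_{k-1}}{\mathbb{N}}}=E^{n_{k-1}}_{{\bf n}_{k-2}}(\mathord{\stackrel{{\bf n}_{k-2}}{\mathbb{N}}})$, substitute the second into the first, replace $E^{n_{k}}_{{\bf n}_{k-1}}$ by $E^{n_{k}}_{{\bf n}_{k-2}}$ on $\mathord{\stackrel{{\bf n}_{k-1}}{\mathbb{N}}}$, and collapse $E^{n_{k}}_{{\bf n}_{k-2}}\circ E^{n_{k-1}}_{{\bf n}_{k-2}}=E^{n_{k-1}+n_{k}}_{{\bf n}_{k-2}}$; the result $E^{n_{k-1}+n_{k}}_{{\bf n}_{k-2}}(\mathord{\stackrel{{\bf n}_{k-2}}{\mathbb{N}}})$ is $\mathord{\stackrel{{\bf n}_{k-1}+n_{k}}{\mathbb{N}}}$ by the same unwinding for the index $({\bf n}_{k-2},n_{k-1}+n_{k})$. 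For the operation identity, I would apply Proposition~\ref{prop:canada} twice, once at level $k$ and then to the inner bracket at level $k-1$, to obtain
\begin{equation}
a\:[{\bf n}_{k}:m]\:b=E^{n_{k}}_{{\bf n}_{k-1}}\!\Bigl(E^{n_{k-1}}_{{\bf n}_{k-2}}\bigl(L^{n_{k-1}}_{{\bf n}_{k-2}}L^{n_{k}}_{{\bf n}_{k-1}}(a)\:[{\bf n}_{k-2}:m]\:L^{n_{k-1}}_{{\bf n}_{k-2}}L^{n_{k}}_{{\bf n}_{k-1}}(b)\bigr)\Bigr),
\end{equation}
then substitute $E_{{\bf n}_{k-2}},L_{{\bf n}_{k-2}}$ for $E_{{\bf n}_{k-1}},L_{{\bf n}_{k-1}}$, collapse the resulting composites of $E_{{\bf n}_{k-2}}$ and of $L_{{\bf n}_{k-2}}$ by additivity of the exponents, and recognize the outcome $E^{n_{k-1}+n_{k}}_{{\bf n}_{k-2}}(L^{n_{k-1}+n_{k}}_{{\bf n}_{k-2}}(a)\:[{\bf n}_{k-2}:m]\:L^{n_{k-1}+n_{k}}_{{\bf n}_{k-2}}(b))$ as $a\:[{\bf n}_{k-1}+n_{k}:m]\:b$, again via Proposition~\ref{prop:canada} for the index $({\bf n}_{k-2},n_{k-1}+n_{k})$.

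The step I expect to be the main obstacle is the domain bookkeeping that makes all these substitutions legitimate: Proposition~\ref{prop:august} only gives $E_{{\bf n}_{k-1}}=E_{{\bf n}_{k-2}}$ on $\mathord{\stackrel{{\bf n}_{k-1}}{\mathbb{N}}}$, so one must check that every argument fed to $E_{{\bf n}_{k-1}}$ or $L_{{\bf n}_{k-1}}$ in the displays above really lands in $\mathord{\stackrel{{\bf n}_{k-1}}{\mathbb{N}}}$ — which is exactly where the containments $\mathord{\stackrel{{\bf n}_{k-1}+j+1}{\mathbb{N}}}\subset\mathord{\stackrel{{\bf n}_{k-1}+j}{\mathbb{N}}}\subset\mathord{\stackrel{{\bf n}_{k-1}}{\mathbb{N}}}\subset\mathord{\stackrel{{\bf n}_{k-2}}{\mathbb{N}}}$ and the invariance $E_{{\bf n}_{k-1}}(\mathord{\stackrel{{\bf n}_{k-1}}{\mathbb{N}}})\subseteq\mathord{\stackrel{{\bf n}_{k-1}}{\mathbb{N}}}$ enter. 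Once those inclusions are verified, the remaining manipulations are purely formal rewriting.
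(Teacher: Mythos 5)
Your proof is correct and follows essentially the same route as the paper: both deduce $E_{{\bf n}_{k-1}}=E_{{\bf n}_{k-2}}$ on $\mathord{\stackrel{{\bf n}_{k-1}}{\mathbb{N}}}$ from Proposition~\ref{prop:august} (applied with the level shifted down by one) and then transport the set identity and the operation identity via Proposition~\ref{prop:canada}. The only differences are cosmetic: you apply Proposition~\ref{prop:canada} twice and reassemble at the combined index $({\bf n}_{k-2},n_{k-1}+n_{k})$, whereas the paper unwinds only $n_{k}$ layers, and you make explicit the domain bookkeeping for the functions $L_{{\bf n}_{k-1}}$, $L_{{\bf n}_{k-2}}$ that the paper leaves implicit.
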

\begin{proof}
Note that $\mathord{\stackrel{{\bf n}_{k}}{\mathbb{N}}}=E^{n_{k}}_{{\bf n}_{k-1}}(\mathord{\stackrel{({\bf n}_{k-1},0)}{\mathbb{N}}})=E^{n_{k}}_{{\bf n}_{k-1}}(\mathord{\stackrel{{\bf n}_{k-1}}{\mathbb{N}}})$ and $\mathord{\stackrel{{\bf n}_{k-1}+n_{k}}{\mathbb{N}}}=E^{n_{k}}_{{\bf n}_{k-2}}(\mathord{\stackrel{{\bf n}_{k-1}}{\mathbb{N}}})$.  Let $w_{k-1}=w_{k-2}\:[{\bf n}_{k-1}:4]\:(n_{k-1}+1)$.  From Proposition~\ref{prop:august} one obtains $E^{n_{k}}_{{\bf n}_{k-1}}=E^{n_{k}}_{{\bf n}_{k-2}}$, therefore $\mathord{\stackrel{{\bf n}_{k}}{\mathbb{N}}}=\mathord{\stackrel{{\bf n}_{k-1}+n_{k}}{\mathbb{N}}}$.  Finally, from Propositions~\ref{prop:canada} and \ref{prop:august} one obtains
\begin{subequations}
\begin{align}
a\:[{\bf n}_{k}:m]\:b&=E^{n_{k}}_{{\bf n}_{k-1}}(L^{n_{k}}_{{\bf n}_{k-1}}(a)\:[{\bf n}_{k-1}:m]\:L^{n_{k}}_{{\bf n}_{k-1}}(b))\\
&=E^{n_{k}}_{{\bf n}_{k-2}}(L^{n_{k}}_{{\bf n}_{k-2}}(a)\:[{\bf n}_{k-1}:m]\:L^{n_{k}}_{{\bf n}_{k-2}}(b))\\
&=a\:[{\bf n}_{k-1}+n_{k}:m]\:b.
\end{align}
\end{subequations}
\end{proof}

\section{Representation of Natural Numbers}\label{sec:repr}
%% Proof that every integer has a representation as a power series with an integer base:
%%\url{https://www.cut-the-knot.org/arithmetic/BaseExpansion.shtml}
%The reference of the blog above is:
%%\url{https://www.amazon.com/exec/obidos/ISBN=0821887955/ctksoftwareincA/}
%% V. H. Moll, Numbers and Functions: From a Classical-Experimental Mathematician's Point of View, AMS, 2012
%% Formulation without proof:
%%\url{https://math.la.asu.edu/~boerner/mat243/4.2%20Integer%20Representations%20(expanded).pdf}
%% Wiki on decimal representation:
%% https://en.wikipedia.org/wiki/Decimal_representation
%
A recursive representation of nonnegative integers using commutative hyperoperations can be obtained from its power series expansion.  Let $a,b\in\mathbb{N}$, the reader is reminded of ordinary notation $a+b=H_{1}(a,b)$, $a\times b=H_{2}(a,b)$ and $a^{b}=H_{3}(a,b)$. It is well known that any integer $a\in\mathbb{N}$ can be represented uniquely as a power series base $w$ in the form
\begin{equation}\label{eq:pwseries-0}
a=d_{0}+d_{1}\times w+d_{2}\times w^{2}+\dotsb+d_{m}\times w^{m},
\end{equation}
where the coefficients $d_{k}\in\{0,1,2,\dotsc,S^{-1}(w)\}$ are the digits and the maximum exponent $m\in\mathbb{N}$ is such that $d_{k>m}=0$.  In the power series of Eq.~\eqref{eq:pwseries-0} one can write each exponent as a power series base $w$ and proceed iteratively.  This is known as the hereditary representation of a natural number.  Let $w$ be the base of the exponential function $E$, underlying the definition of the commutative hyperoperations $\{F_{n}\}_{n\in\mathbb{N}}$.  This power series can be written recursively as
\begin{equation}\label{eq:pwseries-1}
a_{k}=\left\{\begin{array}{ll}
d_{0},& \text{if}\ k=0;\\
F_{0}(F_{1}(d_{k},\mathord{\stackrel{1}{k}}),a_{k-1}),& \text{if}\ 0<k\leq m.
\end{array}\right.
\end{equation}
The term $\mathord{\stackrel{1}{k}}$ is the scale of the digit $d_{k}$ and $a_{m}=a$. Now consider the coefficients $e_{k}\in\{\mathord{\stackrel{n}{0}},\mathord{\stackrel{n}{1}},\mathord{\stackrel{n}{0}},\dotsc,S_{n}^{-1}(\mathord{\stackrel{n}{w}})\}$ for $0\leq k\leq m$.  Applying the function $E^{n}$ to both sides of Eq.~\eqref{eq:pwseries-1} one arrives at the more general statement that any element $b\in\mathord{\stackrel{n}{\mathbb{N}}}$ can be represented as a power series in the semiring $(\mathord{\stackrel{n}{\mathbb{N}}},F_{n},F_{n+1})$, taking the form
\begin{equation}\label{eq:pwseries-2}
b_{k}=\left\{\begin{array}{ll}
e_{0},& \text{if}\ k=0;\\
F_{n}(F_{n+1}(e_{k},\mathord{\stackrel{n+1}{k}}),b_{k-1}),& \text{if}\ 0<k\leq m.
\end{array}\right.
\end{equation} 
The term $\mathord{\stackrel{n+1}{k}}$ is the \textit{scale} of the \text{digit} $e_{k}$ and $b_{m}=b$.  The Eq.~\eqref{eq:pwseries-2} allows hereditary representations written in terms of commutative hyperoperations.  The scale $\mathord{\stackrel{1}{k}}\in\mathord{\stackrel{1}{\mathbb{N}}}$, can be represented as a power series in the semiring $(\mathord{\stackrel{1}{\mathbb{N}}},F_{1},F_{2})$ and so on.  The procedure repeats iteratively until all scales are expanded.  For example, the power series expansion of $266$ base $w=3$ is
\begin{equation}
266=2\times 3^{0}+1\times 3^{1}+2\times 3^{2}+1\times 3^{5}.
\end{equation}
In the usual hereditary representation, the exponent $k=5$ is replaced by its power series expansion $k=2\times 3^{0}+1\times 3^{1}$. Alternatively, using Eq.~\eqref{eq:pwseries-1} with $a=266\in\mathord{\stackrel{0}{\mathbb{N}}}$ and $w=3$, one obtains
\begin{subequations}\label{eq:nasa}
\begin{align}
a_{0}&=\mathord{\stackrel{0}{2}},\\
a_{1}&=F_{0}(F_{1}(\mathord{\stackrel{0}{1}},\mathord{\stackrel{1}{1}}),a_{0}),\\
a_{2}&=F_{0}(F_{1}(\mathord{\stackrel{0}{2}},\mathord{\stackrel{1}{2}}),a_{1}),\\
a_{3}&=a_{2},\\
a_{4}&=a_{3},\\
a_{5}&=F_{0}(F_{1}(\mathord{\stackrel{0}{1}},\mathord{\stackrel{1}{5}}),a_{4})=a.
\end{align}
\end{subequations}
According to the hereditary representation, the scale $\mathord{\stackrel{1}{5}}$ in $a_{5}$ from Eq.~\eqref{eq:nasa}, must be expanded.  From Eq.~\eqref{eq:pwseries-2} with $b=\mathord{\stackrel{1}{5}}\in\mathord{\stackrel{1}{\mathbb{N}}}$ and $w=3$, one obtains
\begin{subequations}
\begin{align}
b_{0}&=\mathord{\stackrel{1}{2}},\\
b_{1}&=F_{1}(F_{2}(\mathord{\stackrel{1}{1}},\mathord{\stackrel{2}{1}}),b_{0})=b.
\end{align}
\end{subequations}
There is no further iteration. The expansion of $266$ base $w=2$ is
\begin{equation}
266=2^{1}+2^{3}+2^{8}.
\end{equation}
From Eq.~\eqref{eq:pwseries-1} with $a=266$ and $w=2$, one obtains
\begin{subequations}\label{eq:spacex}
\begin{align}
a_{0}&=\mathord{\stackrel{0}{0}},\\
a_{1}&=F_{0}(F_{1}(\mathord{\stackrel{0}{1}},\mathord{\stackrel{1}{1}}),a_{0}),\\
a_{2}&=a_{1},\\
a_{3}&=F_{0}(F_{1}(\mathord{\stackrel{0}{1}},\mathord{\stackrel{1}{3}}),a_{2}),\\
a_{4}&=a_{3},\\
&\vdots\nonumber\\
a_{7}&=a_{6},\\
a_{8}&=F_{0}(F_{1}(\mathord{\stackrel{0}{1}},\mathord{\stackrel{1}{8}}),a_{7})=a.
\end{align}
\end{subequations}
The scale $\mathord{\stackrel{1}{3}}$ in $a_{3}$ from Eq.~\eqref{eq:spacex}, must be expanded.  From Eq.~\eqref{eq:pwseries-2} with $b=\mathord{\stackrel{1}{3}}\in\mathord{\stackrel{1}{\mathbb{N}}}$ and $w=2$, one obtains
\begin{subequations}
\begin{align}
b_{0}&=\mathord{\stackrel{1}{1}},\\
b_{1}&=F_{1}(F_{2}(\mathord{\stackrel{1}{1}},\mathord{\stackrel{2}{1}}),b_{0})=b.
\end{align}
\end{subequations}
There is no further iteration in $a_{3}$ from Eq.~\eqref{eq:spacex}.  The scale $\mathord{\stackrel{1}{8}}$ in $a_{8}$ from Eq.~\eqref{eq:spacex}, must be expanded.  Note that $\mathord{\stackrel{1}{8}}=2^{8}=2^{2^{3}}$.  Let $c=\mathord{\stackrel{1}{8}}\in\mathord{\stackrel{1}{\mathbb{N}}}$ and $w=2$.  Using the same procedure as in Eq.~\eqref{eq:pwseries-2}, one obtains
\begin{subequations}\label{eq:solaris}
\begin{align}
c_{0}&=\mathord{\stackrel{1}{0}},\\
c_{1}&=c_{0},\\
c_{2}&=c_{1},\\
c_{3}&=F_{1}(F_{2}(\mathord{\stackrel{1}{1}},\mathord{\stackrel{2}{3}}),c_{2})=c.
\end{align}
\end{subequations}
The scale $\mathord{\stackrel{2}{3}}$ in $c_{3}$ from Eq.~\eqref{eq:solaris}, must be expanded.  Note that $\mathord{\stackrel{2}{3}}=2^{2^{3}}=2^{2^{1+2}}$.  Let $c'=\mathord{\stackrel{2}{3}}\in\mathord{\stackrel{2}{\mathbb{N}}}$ with $w=2$.  Using the same procedure as in Eq.~\eqref{eq:pwseries-2}, one obtains
\begin{subequations}
\begin{align}
c'_{0}&=\mathord{\stackrel{2}{1}},\\
c'_{1}&=F_{2}(F_{3}(\mathord{\stackrel{2}{1}},\mathord{\stackrel{3}{1}}),c'_{0})=c'.
\end{align}
\end{subequations}
There is no further iteration.

\section{Groups and Fields}\label{sec:fields}
%% ``For every abelian monoid there is an abelian group.´´  Grothendieck
%% Keywords:  Grothendieck's group construction (wiki), Inverse completion of natural numbers 
%% \url{https://proofwiki.org/wiki/Definition:Integer}
%% \url{https://ncatlab.org/nlab/show/Grothendieck+group+of+a+commutative+monoid}
%% \url{https://ncatlab.org/nlab/show/group+completion}
%%
This section presents the standard construction of the rational numbers \cite{WA65} as a parallel construction departing from the initial sets $\mathord{\stackrel{n}{\mathbb{N}}}$ for $n\in\mathbb{N}$.  The procedure of inverse completion is applied to each commutative monoid $(\mathord{\stackrel{n}{\mathbb{N}}},F_{n})$, obtaining the abelian group $(\mathbb{Z}_{n,\square},\oplus_{n,\square})$.  A multiplication $\otimes_{n,\square}$ is defined in $\mathbb{Z}_{n,\square}$, making the tuple $(\mathbb{Z}_{n,\square},\oplus_{n,\square},\otimes_{n,\square})$ an integral domain.  After applying inverse completion to the latter, it is obtained the field of quotients $(\mathbb{Q}_{n,\boxtimes},\oplus_{n,\boxtimes},\otimes_{n,\boxtimes})$.

%%\url{https://en.wikipedia.org/wiki/Group_(mathematics)#Definitio}
\subsection{Construction of abelian groups from commutative monoids}
%% https://en.wikipedia.org/wiki/Group_(mathematics)#Definition
\begin{definition}\label{def:goku}
Define the cartesian product $(\mathord{\stackrel{0}{\mathbb{N}}}\times\mathord{\stackrel{0}{\mathbb{N}}},\oplus_{0})$ such that
\begin{equation}
\forall(x,y),(u,v)\in\mathord{\stackrel{0}{\mathbb{N}}}\times\mathord{\stackrel{0}{\mathbb{N}}}:(x,y)\oplus_{0}(u,v)=(F_{0}(x,u),F_{0}(y,v)).
\end{equation}
\end{definition}
\begin{definition}\label{def:robin}
Define the function $\vec{E}$ in $\mathord{\stackrel{0}{\mathbb{N}}}\times\mathord{\stackrel{0}{\mathbb{N}}}$ as
\begin{equation}
\vec{E}(x,y)=(E(x),E(y)).
\end{equation}
Denote by $\vec{L}$ its inverse function.
\end{definition}
\begin{remark}
Since $\mathord{\stackrel{n}{\mathbb{N}}}\subset\mathord{\stackrel{n-1}{\mathbb{N}}}$, then $\mathord{\stackrel{n}{\mathbb{N}}}\times\mathord{\stackrel{n}{\mathbb{N}}}\subset\mathord{\stackrel{n-1}{\mathbb{N}}}\times\mathord{\stackrel{n-1}{\mathbb{N}}}$.
\end{remark}
\begin{definition}
Define the binary operation
\begin{gather}
\oplus_{n}:(\mathord{\stackrel{n}{\mathbb{N}}}\times\mathord{\stackrel{n}{\mathbb{N}}})^{2}\rightarrow\mathord{\stackrel{n}{\mathbb{N}}}\times\mathord{\stackrel{n}{\mathbb{N}}}:\nonumber\\
(x,y)\oplus_{n}(u,v)=(F_{n}(x,u),F_{n}(y,v)).
\end{gather}
Note that
\begin{equation}\label{eq:toby}
(x,y)\oplus_{n}(u,v)=\vec{E}(\vec{L}(x,y)\oplus_{n-1}\vec{L}(u,v)).
\end{equation}
\end{definition}
\begin{proposition}
Let $(x,y),(u,v)\in\mathord{\stackrel{n}{\mathbb{N}}}\times\mathord{\stackrel{n}{\mathbb{N}}}$.  The relation $\stackrel{n}{\square}$ defined as
\begin{equation}
(x,y)\stackrel{n}{\square}(u,v)\iff F_{n}(x,v)=F_{n}(y,u),
\end{equation}
is a congruence relation on the commutative monoid $(\mathord{\stackrel{n}{\mathbb{N}}}\times\mathord{\stackrel{n}{\mathbb{N}}},\oplus_{n})$.
\end{proposition}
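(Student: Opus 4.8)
The plan is to verify directly that $\stackrel{n}{\square}$ is an equivalence relation compatible with $\oplus_n$, leaning on the fact that $(\mathord{\stackrel{n}{\mathbb{N}}},F_n)$ is a commutative monoid (a consequence of Theorem~\ref{th:Fsemiring}). First I would check reflexivity and symmetry, which are immediate: $F_n(x,y)=F_n(y,x)$ gives $(x,y)\stackrel{n}{\square}(x,y)$, and the defining equation $F_n(x,v)=F_n(y,u)$ is itself symmetric in the two pairs after using commutativity. For transitivity, suppose $(x,y)\stackrel{n}{\square}(u,v)$ and $(u,v)\stackrel{n}{\square}(s,t)$, so $F_n(x,v)=F_n(y,u)$ and $F_n(u,t)=F_n(v,s)$. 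Adding $t$ (via $F_n$) to the first equation and $y$ to the second, then using associativity and commutativity of $F_n$, I would derive $F_n(F_n(x,t),F_n(u,v)) = F_n(F_n(y,s),F_n(u,v))$, and then cancel the common summand $F_n(u,v)$. This cancellation is exactly the cancellativity of the monoid $(\mathord{\stackrel{n}{\mathbb{N}}},F_n)$.

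That cancellation property is the main obstacle: it is not part of the bare definition of a commutative monoid, so I must justify it. For $n=0$ this is the familiar cancellativity of $(\mathbb{N},H_1)=(\mathbb{N},+)$. For $n>0$, I would argue via the isomorphism: by Definition~\ref{defn:ahyp}, $F_n(a,c)=E(F_{n-1}(L(a),L(c)))$, and since $E$ and $L$ are mutually inverse bijections between $\mathord{\stackrel{n}{\mathbb{N}}}$ and $\mathord{\stackrel{n-1}{\mathbb{N}}}$ (the Remark following Theorem~\ref{th:Fsemiring} notes $E$ generates semiring isomorphisms), the equation $F_n(a,c)=F_n(b,c)$ transfers to $F_{n-1}(L(a),L(c))=F_{n-1}(L(b),L(c))$, which by induction yields $L(a)=L(b)$, hence $a=b$. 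Alternatively, one can invoke the Remark identifying $\stackrel{n}{<}$ with the restriction of the standard order on $\mathbb{N}$, from which cancellativity of $F_n$ follows since $F_n$ agrees with a strictly monotone operation; I would state whichever route is shortest in context.

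Once transitivity is in hand, the remaining task is compatibility with $\oplus_n$: if $(x,y)\stackrel{n}{\square}(x',y')$ then for any $(u,v)$ we need $(x,y)\oplus_n(u,v)\stackrel{n}{\square}(x',y')\oplus_n(u,v)$, i.e.
\begin{equation}
F_n(F_n(x,u),F_n(y',v)) = F_n(F_n(y,v),F_n(x',u)).
\end{equation}
Starting from the hypothesis $F_n(x,y')=F_n(y,x')$ and applying $F_n$ with $F_n(u,v)$ on both sides, then rearranging by associativity and commutativity of $F_n$, both sides reduce to the displayed identity. Since $\oplus_n$ is componentwise $F_n$, the monoid axioms for $(\mathord{\stackrel{n}{\mathbb{N}}}\times\mathord{\stackrel{n}{\mathbb{N}}},\oplus_n)$ are inherited componentwise and need no separate argument. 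Assembling these pieces — equivalence relation plus the compatibility computation — establishes that $\stackrel{n}{\square}$ is a congruence, and I expect the write-up to be short, with essentially all the content concentrated in the cancellativity lemma cited above.
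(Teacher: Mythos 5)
Your proposal is correct and follows the same overall route as the paper: check reflexivity, symmetry, transitivity, and compatibility with $\oplus_{n}$, using only commutativity and associativity of $F_{n}$ where possible. The genuine difference is in transitivity. The paper simply says that ``adding both equations'' $F_{n}(x_{1},y_{2})=F_{n}(x_{2},y_{1})$ and $F_{n}(x_{2},y_{3})=F_{n}(x_{3},y_{2})$ yields $F_{n}(x_{1},y_{3})=F_{n}(x_{3},y_{1})$, which tacitly cancels the common summand $F_{n}(x_{2},y_{2})$; you identify this hidden use of cancellativity and supply the missing lemma, proving it by induction on $n$ via the bijections $E$ and $L$ (base case: cancellativity of $(\mathbb{N},H_{1})$), which is exactly the right justification in this setting and makes the argument self-contained where the paper leaves a gap. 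One small point to tidy in the write-up: you only verify compatibility in the one-sided form $(x,y)\stackrel{n}{\square}(x',y')\Rightarrow(x,y)\oplus_{n}(u,v)\stackrel{n}{\square}(x',y')\oplus_{n}(u,v)$, whereas the statement (and the paper) requires the two-sided form with both operands varying; either note that the two-sided case follows from the one-sided case together with commutativity of $\oplus_{n}$ and the transitivity you have just established, or do the symmetric computation directly as the paper does --- rearranging $F_{n}(F_{n}(x_{1},u_{1}),F_{n}(y_{2},v_{2}))$ into $F_{n}(F_{n}(x_{1},y_{2}),F_{n}(u_{1},v_{2}))$ and substituting both hypotheses, which needs no cancellation at all.
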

\begin{proof}
Reflexivity and symmetry are straightforward.  It remains to prove transitivity and compatibility:
\begin{enumerate}
\item \textit{Transitivity.} Let $(x_{i},y_{i})\in\mathord{\stackrel{n}{\mathbb{N}}}\times\mathord{\stackrel{n}{\mathbb{N}}}$ for $i=1,2,3$, such that
\begin{equation}
(x_{1},y_{1})\stackrel{n}{\square}(x_{2},y_{2})\quad
\text{and}\quad
(x_{2},y_{2})\stackrel{n}{\square}(x_{3},y_{3}).
\end{equation}
Since
\begin{equation}\label{eq:vargas}
F_{n}(x_{1},y_{2})=F_{n}(x_{2},y_{1})\quad
\text{and}\quad
F_{n}(x_{2},y_{3})=F_{n}(x_{3},y_{2}),
\end{equation}
adding both equations one obtains
\begin{equation}
F_{n}(x_{1},y_{3})=F_{n}(x_{3},y_{1}).
\end{equation}
That is $(x_{1},y_{1})\stackrel{n}{\square}(x_{3},y_{3})$.  
\item \textit{Compatibility.} Let $(x_{i},y_{i}),(u_{i},v_{i})\in\mathord{\stackrel{n}{\mathbb{N}}}\times\mathord{\stackrel{n}{\mathbb{N}}}$ for $i=1,2$, such that
\begin{equation}\label{eq:pontiac}
(x_{1},y_{1})\stackrel{n}{\square}(x_{2},y_{2})\quad
\text{and}\quad
(u_{1},v_{1})\stackrel{n}{\square}(u_{2},v_{2}).
\end{equation}
For $i=1,2$, denote
\begin{equation}
(a_{i},b_{i})=(x_{i},y_{i})\oplus_{n}(u_{i},v_{i})=(F_{n}(x_{i},u_{i}),F_{n}(y_{i},v_{i})).
\end{equation}
From Eq.~\eqref{eq:pontiac} it follows that
\begin{equation}
F_{n}(a_{1},b_{2})=F_{n}(a_{2},b_{1}),
\end{equation}
implying $(a_{1},b_{1})\stackrel{n}{\square}(a_{2},b_{2})$.
\end{enumerate}
\end{proof}
\begin{notation}
Consider the quotient structure
\begin{equation}
(\mathbb{Z}_{n,\square},\oplus_{n,\square})=\left(\frac{\mathord{\stackrel{n}{\mathbb{N}}}\times\mathord{\stackrel{n}{\mathbb{N}}}}{\stackrel{n}{\square}},\oplus_{n,\square}\right),
\end{equation}
where $\oplus_{n,\square}$ is the operation induced on $(\mathord{\stackrel{n}{\mathbb{N}}}\times\mathord{\stackrel{n}{\mathbb{N}}})/\!\stackrel{n}{\square}$ by $\oplus_{n}$.  Denote by $[\![(x,y)]\!]_{n,\square}$ the equivalence class of $(x,y)$ under $\stackrel{n}{\square}$.
\end{notation}
\begin{remark}
For all $(x,y),(u,v)\in\mathord{\stackrel{n}{\mathbb{N}}}\times\mathord{\stackrel{n}{\mathbb{N}}}$, if $(x,y)\stackrel{n}{\square}(u,v)$, then $(y,x)\stackrel{n}{\square}(v,u)$.
\end{remark}
\begin{definition}
Define the involution
\begin{gather}
T_{n,\square}\: :\mathbb{Z}_{n,\square}\rightarrow\mathbb{Z}_{n,\square}:\nonumber\\
T_{n,\square}\:[\![(x,y)]\!]_{n,\square}=[\![(y,x)]\!]_{n,\square}.
\end{gather}
\end{definition}
\begin{proposition}\label{prop:kamikaze}
The tuple $(\mathbb{Z}_{n,\square},\oplus_{n,\square})$ is an abelian group.
\end{proposition}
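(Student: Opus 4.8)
The plan is to verify the abelian group axioms for $(\mathbb{Z}_{n,\square},\oplus_{n,\square})$ directly, exploiting the fact that $\stackrel{n}{\square}$ is already established to be a congruence on the commutative monoid $(\mathord{\stackrel{n}{\mathbb{N}}}\times\mathord{\stackrel{n}{\mathbb{N}}},\oplus_{n})$, so the quotient $\oplus_{n,\square}$ is automatically a well-defined, associative, commutative binary operation. Since $(\mathord{\stackrel{n}{\mathbb{N}}},F_{n})$ is a commutative monoid by Theorem~\ref{th:Fsemiring}, the product monoid $(\mathord{\stackrel{n}{\mathbb{N}}}\times\mathord{\stackrel{n}{\mathbb{N}}},\oplus_{n})$ inherits associativity, commutativity, and the identity $(\mathord{\stackrel{n}{0}},\mathord{\stackrel{n}{0}})$, and each of these passes to the quotient. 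Thus the only substantive points to check are the existence of a two-sided identity in $\mathbb{Z}_{n,\square}$ and the existence of inverses.

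First I would identify the identity element as $0_{n,\square}=[\![(\mathord{\stackrel{n}{0}},\mathord{\stackrel{n}{0}})]\!]_{n,\square}$, and observe that for any $[\![(x,y)]\!]_{n,\square}$ one has $[\![(x,y)]\!]_{n,\square}\oplus_{n,\square}0_{n,\square}=[\![(F_{n}(x,\mathord{\stackrel{n}{0}}),F_{n}(y,\mathord{\stackrel{n}{0}}))]\!]_{n,\square}=[\![(x,y)]\!]_{n,\square}$, using that $\mathord{\stackrel{n}{0}}$ is the additive identity of the semiring. Commutativity of $\oplus_{n,\square}$ (inherited from commutativity of $F_{n}$) then gives the other side. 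Next I would show that $T_{n,\square}[\![(x,y)]\!]_{n,\square}=[\![(y,x)]\!]_{n,\square}$ is the inverse of $[\![(x,y)]\!]_{n,\square}$: the sum is $[\![(F_{n}(x,y),F_{n}(y,x))]\!]_{n,\square}$, and since $F_{n}(x,y)=F_{n}(y,x)$ by commutativity, the pair $(F_{n}(x,y),F_{n}(y,x))$ satisfies $F_{n}(F_{n}(x,y),\mathord{\stackrel{n}{0}})=F_{n}(\mathord{\stackrel{n}{0}}\,\text{-shifted})\dots$; more precisely $(c,c)\stackrel{n}{\square}(\mathord{\stackrel{n}{0}},\mathord{\stackrel{n}{0}})$ because $F_{n}(c,\mathord{\stackrel{n}{0}})=c=F_{n}(c,\mathord{\stackrel{n}{0}})$, so the sum lands in $0_{n,\square}$. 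I should also note that $T_{n,\square}$ is well-defined, which follows from the remark preceding the definition that $(x,y)\stackrel{n}{\square}(u,v)$ implies $(y,x)\stackrel{n}{\square}(v,u)$.

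The one genuinely load-bearing ingredient that deserves a remark rather than being taken for granted is \emph{cancellativity} of $(\mathord{\stackrel{n}{\mathbb{N}}},F_{n})$, i.e.\ that $F_{n}(a,c)=F_{n}(b,c)$ implies $a=b$; this is what makes the transitivity argument in the preceding proposition actually go through (adding the two equations in Eq.~\eqref{eq:vargas} and cancelling the common $F_{n}(x_{2},y_{2})$ term), and it is what guarantees the natural map $a\mapsto[\![(a,\mathord{\stackrel{n}{0}})]\!]_{n,\square}$ is injective so that $\mathbb{Z}_{n,\square}$ genuinely extends $\mathord{\stackrel{n}{\mathbb{N}}}$. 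Cancellativity holds because, by the remark following Proposition~\ref{prop:monday}, $\stackrel{n}{<}$ is the restriction of the standard order on $\mathbb{N}$ and $F_{n}$ is strictly monotone; alternatively it descends through $E$ and $L$ from cancellativity of ordinary addition $H_{1}$. I expect this to be the main conceptual obstacle: everything else is the standard Grothendieck-group construction applied verbatim, and the novelty is purely that the monoid operation is $F_{n}$ rather than $+$, so the write-up should emphasize where the commutative-monoid structure from Theorem~\ref{th:Fsemiring} and the cancellation property are invoked, while the associativity and commutativity bookkeeping can be dispatched by citing that $\stackrel{n}{\square}$ is a congruence.
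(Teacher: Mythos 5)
Your proposal is correct and follows essentially the same route as the paper: the paper likewise observes that the quotient of the commutative monoid $(\mathord{\stackrel{n}{\mathbb{N}}}\times\mathord{\stackrel{n}{\mathbb{N}}},\oplus_{n})$ by the congruence $\stackrel{n}{\square}$ is a commutative monoid and that the involution $T_{n,\square}$ supplies the inverses, which you spell out via $(F_{n}(x,y),F_{n}(y,x))\stackrel{n}{\square}(\mathord{\stackrel{n}{0}},\mathord{\stackrel{n}{0}})$. Your added remark on cancellativity is a sensible observation about the preceding congruence proposition and the embedding, but it is not needed for the group axioms themselves, and the paper does not invoke it here.
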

\begin{proof}  
Since $\stackrel{n}{\square}$ is a congruence relation on the commutative monoid $(\mathord{\stackrel{n}{\mathbb{N}}}\times\mathord{\stackrel{n}{\mathbb{N}}},\oplus_{n})$, the corresponding quotient structure $(\mathbb{Z}_{n,\square},\oplus_{n,\square})$ is also a commutative monoid.  It is straightforward to verify that the involution $T_{n,\square}$ is the group inverse in $(\mathbb{Z}_{n,\square},\oplus_{n,\square})$, therefore it is a group.
\end{proof}
\begin{definition}\label{def:peterpan}
The ordering $\stackrel{n}{<}$ in $\mathbb{Z}_{n,\square}$ is defined as
\begin{equation}
[\![(x,y)]\!]_{n,\square}\stackrel{n}{<}[\![(u,v)]\!]_{n,\square}\iff F_{n}(x,v)<F_{n}(y,u).
%% Introducing ordering...
%\url{https://en.wikipedia.org/wiki/Integer#Construction}
\end{equation} 
\end{definition}
\begin{proposition}\label{prop:kiev}
Let $(x,y),(u,v)\in\mathord{\stackrel{n}{\mathbb{N}}}\times\mathord{\stackrel{n}{\mathbb{N}}}$, then
\begin{equation}
(x,y)\stackrel{n}{\square}(u,v)\iff\vec{E}((x,y))\stackrel{n+1}{\square}\Vec{E}((u,v)).
\end{equation}
\end{proposition}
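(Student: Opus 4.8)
The plan is to collapse both sides of the claimed equivalence to a single equality in $\mathord{\stackrel{n}{\mathbb{N}}}$ using the defining recursion of the commutative hyperoperations together with the injectivity of $E$. First I record the basic identity that, by Definition~\ref{defn:ahyp} and the fact that $L\circ E$ is the identity on $\mathord{\stackrel{n}{\mathbb{N}}}$, one has $F_{n+1}(E(a),E(b))=E\bigl(F_{n}(L(E(a)),L(E(b)))\bigr)=E(F_{n}(a,b))$ for all $a,b\in\mathord{\stackrel{n}{\mathbb{N}}}$. Since $E(\mathord{\stackrel{n}{\mathbb{N}}})=\mathord{\stackrel{n+1}{\mathbb{N}}}$, the pair $\vec{E}((x,y))=(E(x),E(y))$ lies in $\mathord{\stackrel{n+1}{\mathbb{N}}}\times\mathord{\stackrel{n+1}{\mathbb{N}}}$, so the relation $\stackrel{n+1}{\square}$ appearing on the right-hand side is indeed defined on these arguments.

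Next I would unwind the definition of $\stackrel{n+1}{\square}$ applied to $\vec{E}((x,y))=(E(x),E(y))$ and $\vec{E}((u,v))=(E(u),E(v))$: it asserts $F_{n+1}(E(x),E(v))=F_{n+1}(E(y),E(u))$. By the identity of the previous paragraph this is equivalent to $E(F_{n}(x,v))=E(F_{n}(y,u))$, and since $E$ is injective (it admits the inverse $L$), this in turn is equivalent to $F_{n}(x,v)=F_{n}(y,u)$, which is precisely $(x,y)\stackrel{n}{\square}(u,v)$. Reading the chain of equivalences in both directions proves the proposition.

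I do not expect any genuine obstacle here; the argument is a one-line application of Definition~\ref{defn:ahyp} plus injectivity of $E$. The only point that warrants a moment's care is the bookkeeping of which coordinates are paired by the congruence (first coordinate of one pair with second coordinate of the other) and the verification that $\vec{E}$ lands in the correct Cartesian square so that $\stackrel{n+1}{\square}$ is meaningful on the right-hand side.
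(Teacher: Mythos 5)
Your argument is correct and is essentially the paper's own proof: both reduce the relation $\stackrel{n+1}{\square}$ on the image pairs to $F_{n+1}(E(x),E(v))=F_{n+1}(E(y),E(u))$, identify this with $E(F_{n}(x,v))=E(F_{n}(y,u))$ via the defining recursion of $F_{n+1}$, and use injectivity of $E$ for the converse direction. The only marginal remark is that for $n=0$ the identity $F_{1}(E(a),E(b))=E(F_{0}(a,b))$ comes from Proposition~\ref{prop:mrule} rather than directly from Definition~\ref{defn:ahyp}, a point the paper's proof also leaves implicit.
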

\begin{proof} 
The proof that $(x,y)\stackrel{n}{\square}(u,v)\Rightarrow\vec{E}(x,y)\stackrel{n+1}{\square}\vec{E}(u,v)$ is the following: If $(x,y)\stackrel{n}{\square}(u,v)$, then $F_{n}(x,v)=F_{n}(y,u)$.  Applying the exponential $E$ at both sides, one obtains $F_{n+1}(E(x),E(v))=F_{n+1}(E(y),E(u))$.  From the definition of $\stackrel{n+1}{\square}$, this implies $(E(x),E(y))\stackrel{n+1}{\square}(E(u),E(v))$, that is, $\vec{E}(x,y)\stackrel{n+1}{\square}\vec{E}(u,v)$.  Noting that $E$ is an injective function, the converse statement is proven in a similar way. 
\end{proof}
\begin{definition}\label{def:brexit}
Define the function $E_{\square}$ in $U_{\mathbb{Z}}=\bigcup_{n\in\mathbb{N}}\mathbb{Z}_{n,\square}$ as
\begin{equation}
E_{\square}([\![a]\!]_{n,\square})=[\![\vec{E}(a)]\!]_{n+1,\square}.
\end{equation}
Denote by $L_{\square}$ its inverse function.
\end{definition}
\begin{remark}
Note that $\mathbb{Z}_{n,\square}=E_{\square}(\mathbb{Z}_{n-1,\square})$.
\end{remark}
\begin{proposition}\label{prop:piscis}
Let $a,b\in\mathbb{Z}_{n,\square}$, then
\begin{equation}
a\oplus_{n,\square}b=E_{\square}(L_{\square}(a)\oplus_{n-1,\square}L_{\square}(b)).
\end{equation}
\end{proposition}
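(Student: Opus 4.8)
The plan is to lift Equation~\eqref{eq:toby} from the product monoid $(\mathord{\stackrel{n}{\mathbb{N}}}\times\mathord{\stackrel{n}{\mathbb{N}}},\oplus_{n})$ to the quotient group $(\mathbb{Z}_{n,\square},\oplus_{n,\square})$ by picking representatives and tracking how $E_{\square}$ and $L_{\square}$ act on equivalence classes. First I would write $a=[\![(x,y)]\!]_{n,\square}$ and $b=[\![(u,v)]\!]_{n,\square}$ for some $(x,y),(u,v)\in\mathord{\stackrel{n}{\mathbb{N}}}\times\mathord{\stackrel{n}{\mathbb{N}}}$. Since $\mathord{\stackrel{n}{\mathbb{N}}}=E(\mathord{\stackrel{n-1}{\mathbb{N}}})$, the map $\vec{L}$ is defined at these points, and Definition~\ref{def:brexit} (together with Proposition~\ref{prop:kiev}, which guarantees independence of the chosen representative) gives $L_{\square}(a)=[\![\vec{L}(x,y)]\!]_{n-1,\square}$ and $L_{\square}(b)=[\![\vec{L}(u,v)]\!]_{n-1,\square}$.

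Then, using that $\oplus_{n-1,\square}$ is the operation induced by $\oplus_{n-1}$ on equivalence classes,
\begin{equation}
L_{\square}(a)\oplus_{n-1,\square}L_{\square}(b)=[\![\vec{L}(x,y)\oplus_{n-1}\vec{L}(u,v)]\!]_{n-1,\square},
\end{equation}
and applying $E_{\square}$ via Definition~\ref{def:brexit},
\begin{equation}
E_{\square}\bigl(L_{\square}(a)\oplus_{n-1,\square}L_{\square}(b)\bigr)=[\![\vec{E}\bigl(\vec{L}(x,y)\oplus_{n-1}\vec{L}(u,v)\bigr)]\!]_{n,\square}.
\end{equation}
By Equation~\eqref{eq:toby} the argument inside the class equals $(x,y)\oplus_{n}(u,v)$, and since $\oplus_{n,\square}$ is induced by $\oplus_{n}$,
\begin{equation}
[\![(x,y)\oplus_{n}(u,v)]\!]_{n,\square}=[\![(x,y)]\!]_{n,\square}\oplus_{n,\square}[\![(u,v)]\!]_{n,\square}=a\oplus_{n,\square}b,
\end{equation}
which is the claimed identity.

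The only point requiring any care — the \emph{main obstacle}, though it is a minor one — is justifying that $L_{\square}$ acts on a class by applying $\vec{L}$ to an arbitrary representative, i.e.\ that $(x,y)\stackrel{n}{\square}(x',y')$ forces $\vec{L}(x,y)\stackrel{n-1}{\square}\vec{L}(x',y')$. This is precisely the converse direction of Proposition~\ref{prop:kiev} combined with the injectivity of $E$, and it is also what makes $L_{\square}$ well defined as a function on $\mathbb{Z}_{n,\square}=E_{\square}(\mathbb{Z}_{n-1,\square})$ in the first place. Once that is in hand, the proof is just an unwinding of the definitions of the induced operations and of $E_{\square}$, $L_{\square}$, with Equation~\eqref{eq:toby} doing the real work.
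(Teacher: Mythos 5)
Your proposal is correct and follows essentially the same route as the paper's proof: pick representatives, use Equation~\eqref{eq:toby} together with Definition~\ref{def:brexit} and the fact that $\oplus_{n-1,\square}$ and $\oplus_{n,\square}$ are the induced operations, just read in the opposite direction of the paper's chain of equalities. Your explicit remark that $L_{\square}$ is well defined on classes (via the converse direction of Proposition~\ref{prop:kiev} and the injectivity of $E$) is a point the paper uses only implicitly, but it does not constitute a different approach.
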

\begin{proof}
Let $a=[\![(x,y)]\!]_{n,\square}$ and $b=[\![(u,v)]\!]_{n,\square}$, then
\begin{subequations}
\begin{align}
[\![(x,y)]\!]_{n,\square}\oplus_{n,\square}[\![(u,v)]\!]_{n,\square}
&=[\![(x,y)\oplus_{n}(u,v)]\!]_{n,\square}\\
&=[\![\vec{E}(\vec{L}(x,y)\oplus_{n-1}\vec{L}(u,v))]\!]_{n,\square}\\
&=E_{\square}([\![\vec{L}(x,y)\oplus_{n-1}\vec{L}(u,v)]\!]_{n-1,\square})\\
&=E_{\square}([\![\vec{L}(x,y)]\!]_{n-1,\square}\oplus_{n-1,\square}[\![\vec{L}(u,v)]\!]_{n-1,\square})\\
&=E_{\square}(L_{\square}([\![(x,y)]\!]_{n,\square})\oplus_{n-1,\square}L_{\square}([\![(u,v)]\!]_{n,\square})).
\end{align}
\end{subequations}
\end{proof}
\begin{notation}
Denote $k_{n,\square}=[\![(\mathord{\stackrel{n}{k}},\mathord{\stackrel{n}{0}})]\!]_{n,\square}$ and $\mathbb{N}_{n,\square}=[\![(\mathord{\stackrel{n}{\mathbb{N}}},\mathord{\stackrel{n}{0}})]\!]_{n,\square}$.
\end{notation}
\begin{remark}
Every element in the abelian group $(\mathbb{Z}_{n,\square},\oplus_{n,\square})$ can be written as $a\oplus_{n,\square}(T_{n,\square}\:b)$, where $a,b\in\mathbb{N}_{n,\square}$.
\end{remark}

   \subsection{Construction of integral domains from abelian groups} 
   \begin{definition}\label{def:biontech}
Define the binary operation
\begin{gather}
\otimes_{n}:(\mathord{\stackrel{n}{\mathbb{N}}}\times\mathord{\stackrel{n}{\mathbb{N}}})^{2}\rightarrow\mathord{\stackrel{n}{\mathbb{N}}}\times\mathord{\stackrel{n}{\mathbb{N}}}:\nonumber\\
(x,y)\otimes_{n}(u,v)=(F_{n}(F_{n+1}(x,u),F_{n+1}(y,v)),F_{n}(F_{n+1}(y,u),F_{n+1}(x,v))).
\end{gather}
\end{definition}
\begin{proposition}
Let $(x_{i},y_{i}),(u_{i},v_{i})\in\mathord{\stackrel{n}{\mathbb{N}}}\times\mathord{\stackrel{n}{\mathbb{N}}}$ for $i=1,2$, such that
\begin{equation}\label{eq:impala}
(x_{1},y_{1})\stackrel{n}{\square}(x_{2},y_{2})\quad
\text{and}\quad
(u_{1},v_{1})\stackrel{n}{\square}(u_{2},v_{2}),
\end{equation}
then
\begin{equation}
((x_{1},y_{1})\otimes_{n}(u_{1},v_{1}))\stackrel{n}{\square}((x_{2},y_{2})\otimes_{n}(u_{2},v_{2})).
\end{equation}
\end{proposition}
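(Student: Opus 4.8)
The plan is to reduce this two-sided statement to a one-sided one and then chain with the transitivity of $\stackrel{n}{\square}$ already established. First I would prove the auxiliary claim: if $(x_{1},y_{1})\stackrel{n}{\square}(x_{2},y_{2})$, then for every $(u,v)\in\mathord{\stackrel{n}{\mathbb{N}}}\times\mathord{\stackrel{n}{\mathbb{N}}}$,
\begin{equation}
(x_{1},y_{1})\otimes_{n}(u,v)\stackrel{n}{\square}(x_{2},y_{2})\otimes_{n}(u,v).
\end{equation}
Expanding both sides with Definition~\ref{def:biontech} and unwinding the definition of $\stackrel{n}{\square}$, this is an identity $F_{n}(\dotsb)=F_{n}(\dotsb)$ whose two sides are each an $F_{n}$-sum of four $F_{n+1}$-products. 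Using associativity and commutativity of $F_{n}$ (Theorem~\ref{th:Fsemiring}) I would regroup the eight products into pairs sharing a common factor $u$ or a common factor $v$, and then apply distributivity to collapse each pair into a single $F_{n+1}$-product. After the hypothesis is substituted in the form $F_{n}(x_{1},y_{2})=F_{n}(x_{2},y_{1})$, both sides reduce to $F_{n}(F_{n+1}(t,u),F_{n+1}(t,v))$ with $t$ this common value, hence they agree.

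Next I would observe that $\otimes_{n}$ is commutative: swapping the two pairs in Definition~\ref{def:biontech} and invoking commutativity of $F_{n}$ and of $F_{n+1}$ (Theorem~\ref{th:Fsemiring}) returns the same element of $\mathord{\stackrel{n}{\mathbb{N}}}\times\mathord{\stackrel{n}{\mathbb{N}}}$. Therefore the auxiliary claim also yields, whenever $(u_{1},v_{1})\stackrel{n}{\square}(u_{2},v_{2})$, that $(x,y)\otimes_{n}(u_{1},v_{1})\stackrel{n}{\square}(x,y)\otimes_{n}(u_{2},v_{2})$ for every $(x,y)$, i.e., compatibility in the second slot as well.

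Finally, under hypothesis~\eqref{eq:impala}, applying the claim once in each slot gives
\begin{equation}
(x_{1},y_{1})\otimes_{n}(u_{1},v_{1})\stackrel{n}{\square}(x_{2},y_{2})\otimes_{n}(u_{1},v_{1})\stackrel{n}{\square}(x_{2},y_{2})\otimes_{n}(u_{2},v_{2}),
\end{equation}
and transitivity of $\stackrel{n}{\square}$ closes the argument. The only real obstacle is the bookkeeping inside the one-sided claim: one must track the four-term $F_{n}$-sums carefully so that exactly the pairs admitting a distributive factorization are grouped together, since the nested $F_{n},F_{n+1}$ notation makes it easy to mismatch terms. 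Writing $F_{n}$ additively and $F_{n+1}$ multiplicatively during the computation, as licensed by Theorem~\ref{th:Fsemiring}, keeps the manipulation transparent and reduces it to the familiar identity $(x_{1}-y_{1})(u-v)-(x_{2}-y_{2})(u-v)=\big(F_{n}(x_{1},y_{2})-F_{n}(y_{1},x_{2})\big)(u-v)$.
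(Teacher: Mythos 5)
Your proof is correct, but it takes a different route from the paper. The paper's proof is a direct, one-shot verification: writing $(a_{i},b_{i})=(x_{i},y_{i})\otimes_{n}(u_{i},v_{i})$, it asserts that the two hypotheses together yield $F_{n}(a_{1},b_{2})=F_{n}(b_{1},a_{2})$ at once (the intermediate computation is not displayed). You instead factor the statement through a one-sided compatibility claim --- fixing $(u,v)$ and varying only the first factor --- where the hypothesis $F_{n}(x_{1},y_{2})=F_{n}(x_{2},y_{1})$ lets both sides collapse, via distributivity from Theorem~\ref{th:Fsemiring}, to $F_{n}(F_{n+1}(t,u),F_{n+1}(t,v))$ with $t$ the common value; you then transfer this to the second slot by commutativity of $\otimes_{n}$ and chain the two relations by transitivity of $\stackrel{n}{\square}$, which was established in the preceding proposition. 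What your decomposition buys is that each individual step is a pure semiring identity requiring no cancellation, whereas the direct route the paper gestures at, if written out, typically needs either the same two-step chaining or an auxiliary-term-plus-cancellation argument that the paper never exhibits; the cost is that you lean on transitivity (itself proved earlier using additive cancellation in $\mathord{\stackrel{n}{\mathbb{N}}}$), so the dependence on cancellativity is not eliminated, only relocated to an already established result. One small caution: your closing ``familiar identity'' with minus signs is only a heuristic, since subtraction is not available in $\mathord{\stackrel{n}{\mathbb{N}}}$; it plays no role in your actual argument, but it should not be presented as the formal content of the one-sided claim.
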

\begin{proof}
For $i=1,2$, denote
\begin{equation}
(a_{i},b_{i})=((x_{i},y_{i})\otimes_{n}(u_{i},v_{i}))
=(F_{n}(F_{n+1}(x_{i},u_{i}),F_{n+1}(y_{i},v_{i})),F_{n}(F_{n+1}(y_{i},u_{i}),F_{n+1}(x_{i},v_{i}))).
\end{equation}
From Eq.~\eqref{eq:impala} it follows that
\begin{equation}
F_{n}(a_{1},b_{2})=F_{n}(b_{1},a_{2}),
%% PROOF: Using conventional notation + (=F_{n}) and . (=F_{n+1})
%% The Eqs.~\ref{eq:impala} become
%% x_{1}+y_{2}=y_{1}+x_{2} and u_{1}+v_{2}=v_{1}+u_{2}.
%% You can start from the equation
%% F_{n}(F_{n}(a_{1},b_{2}),F_{n+1}(u_{1},y_{2}))
%% =F_{n}(F_{n}(b_{1},a_{2}),F_{n+1}(u_{1},y_{2}))
%% that is
%% ((x_{1}u_{1}+y_{1}v_{1})+(y_{2}u_{2}+x_{2}v_{2}))+u_{1}y_{2}
%% ((y_{1}u_{1}+x_{1}v_{1})+(x_{2}u_{2}+y_{2}v_{2}))+u_{1}y_{2}.
%% the results now follow from replacements 
%% and common factors.
\end{equation}
therefore
\begin{equation}
(a_{1},b_{1})\stackrel{n}{\square}(a_{2},b_{2}).
\end{equation}
\end{proof}
\begin{definition}
Define the binary operation
\begin{gather}
\otimes_{n,\square}:(\mathbb{Z}_{n,\square})^{2}\rightarrow\mathbb{Z}_{n,\square}:\nonumber\\
[\![(x,y)]\!]_{n,\square}\otimes_{n,\square}[\![(u,v)]\!]_{n,\square}=[\![(x,y)\otimes_{n}(u,v)]\!]_{n,\square}.
\end{gather}
\end{definition}
\begin{proposition}\label{prop:aries}
Let $a,b\in\mathbb{Z}_{n,\square}$, then
\begin{equation}
a\otimes_{n,\square}b=E_{\square}(L_{\square}(a)\otimes_{n-1,\square}L_{\square}(b)).
\end{equation}
\end{proposition}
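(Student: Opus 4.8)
The plan is to run the same argument as in Proposition~\ref{prop:piscis}, carrying out the computation on representatives and then passing to equivalence classes. Fix representatives $a=[\![(x,y)]\!]_{n,\square}$ and $b=[\![(u,v)]\!]_{n,\square}$ with $(x,y),(u,v)\in\mathord{\stackrel{n}{\mathbb{N}}}\times\mathord{\stackrel{n}{\mathbb{N}}}$. Everything hinges on establishing, at the level of $\mathord{\stackrel{n}{\mathbb{N}}}\times\mathord{\stackrel{n}{\mathbb{N}}}$, the multiplicative analogue of Eq.~\eqref{eq:toby}, namely
\[
(x,y)\otimes_{n}(u,v)=\vec{E}\bigl(\vec{L}(x,y)\otimes_{n-1}\vec{L}(u,v)\bigr).
\]

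The first step is to prove this identity by expanding both sides with Definition~\ref{def:biontech}. On the left, the first coordinate of $(x,y)\otimes_{n}(u,v)$ is $F_{n}(F_{n+1}(x,u),F_{n+1}(y,v))$. Applying $F_{n}(p,q)=E(F_{n-1}(L(p),L(q)))$ (Definition~\ref{defn:ahyp}, or Proposition~\ref{prop:mrule} when $n=1$) and then $L(F_{n+1}(x,u))=F_{n}(L(x),L(u))$, this equals
\[
E\bigl(F_{n-1}(F_{n}(L(x),L(u)),F_{n}(L(y),L(v)))\bigr),
\]
which is exactly the first coordinate of $\vec{E}(\vec{L}(x,y)\otimes_{n-1}\vec{L}(u,v))$ once $\otimes_{n-1}$ is expanded via Definition~\ref{def:biontech}; the second coordinates agree by the identical manipulation. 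This layer-by-layer bookkeeping of the nested $E$, $L$, $F_{n}$, $F_{n+1}$ is the only delicate point of the proof, and it is the step I expect to be the main obstacle, though it is purely mechanical.

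Granting the displayed identity, the conclusion follows by the same chain of equalities used in Proposition~\ref{prop:piscis}:
\begin{align*}
a\otimes_{n,\square}b
&=[\![(x,y)\otimes_{n}(u,v)]\!]_{n,\square}\\
&=[\![\vec{E}(\vec{L}(x,y)\otimes_{n-1}\vec{L}(u,v))]\!]_{n,\square}\\
&=E_{\square}\bigl([\![\vec{L}(x,y)\otimes_{n-1}\vec{L}(u,v)]\!]_{n-1,\square}\bigr)\\
&=E_{\square}\bigl([\![\vec{L}(x,y)]\!]_{n-1,\square}\otimes_{n-1,\square}[\![\vec{L}(u,v)]\!]_{n-1,\square}\bigr)\\
&=E_{\square}\bigl(L_{\square}(a)\otimes_{n-1,\square}L_{\square}(b)\bigr).
\end{align*}
Here the third line applies Definition~\ref{def:brexit} in the form $[\![\vec{E}(c)]\!]_{n,\square}=E_{\square}([\![c]\!]_{n-1,\square})$ (well-defined by Proposition~\ref{prop:kiev}), the fourth line is the definition of $\otimes_{n-1,\square}$ on equivalence classes, and the last line uses that $L_{\square}$ is the inverse of $E_{\square}$, so $L_{\square}([\![(x,y)]\!]_{n,\square})=[\![\vec{L}(x,y)]\!]_{n-1,\square}$. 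The well-definedness of $\otimes_{n,\square}$ on classes, needed for the first line, has already been established, so no additional verification is required.
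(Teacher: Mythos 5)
Your proposal is correct and follows essentially the same route as the paper: expand $\otimes_{n,\square}$ on representatives via Definition~\ref{def:biontech}, rewrite each coordinate using $F_{n}(p,q)=E(F_{n-1}(L(p),L(q)))$ together with $L(F_{n+1}(\cdot,\cdot))=F_{n}(L(\cdot),L(\cdot))$ so the pair becomes $\vec{E}(\vec{L}(x,y)\otimes_{n-1}\vec{L}(u,v))$, and then pass to classes through $E_{\square}$ and the definition of $\otimes_{n-1,\square}$. Your explicit citation of Proposition~\ref{prop:mrule} for the base layer $n=1$ and of Proposition~\ref{prop:kiev} for the well-definedness of $E_{\square}$ only makes explicit what the paper leaves implicit.
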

\begin{proof}
Let $a=[\![(x,y)]\!]_{n,\square}$ and $b=[\![(u,v)]\!]_{n,\square}$, then
\begin{subequations}
\begin{gather}
[\![(x,y)]\!]_{n,\square}\otimes_{n,\square}[\![(u,v)]\!]_{n,\square}\\
=[\![(F_{n}(F_{n+1}(x,u),F_{n+1}(y,v)),F_{n}(F_{n+1}(x,v),F_{n+1}(y,u)))]\!]_{n,\square}\\
=E_{\square}([\![(F_{n-1}(F_{n}(L(x),L(u)),F_{n}(L(y),L(v))),F_{n-1}(F_{n}(L(x),L(v)),F_{n}(L(y),L(u))))]\!]_{n-1,\square})\\
=E_{\square}([\![(L(x),L(y))]\!]_{n-1,\square}\otimes_{n-1,\square}[\![(L(u),L(v))]\!]_{n-1,\square})\\
=E_{\square}(L_{\square}([\![(x,y)]\!]_{n,\square})\otimes_{n-1,\square}L_{\square}([\![(u,v)]\!]_{n,\square})).
\end{gather}
\end{subequations}
\end{proof}
\begin{proposition}\label{prop:mario}
The tuple $(\mathbb{N}_{n,\square},\oplus_{n,\square},\otimes_{n,\square})$ is a commutative semiring. %with additive identity $0_{n,\square}$ and multiplicative identity $1_{n,\square}$.
\end{proposition}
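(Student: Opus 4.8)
The plan is to realize $(\mathbb{N}_{n,\square},\oplus_{n,\square},\otimes_{n,\square})$ as an isomorphic copy of the commutative semiring $(\mathord{\stackrel{n}{\mathbb{N}}},F_{n},F_{n+1})$ of Theorem~\ref{th:Fsemiring}. Concretely, I would work with the map $\iota_{n}\colon\mathord{\stackrel{n}{\mathbb{N}}}\to\mathbb{Z}_{n,\square}$ given by $\iota_{n}(\mathord{\stackrel{n}{k}})=k_{n,\square}=[\![(\mathord{\stackrel{n}{k}},\mathord{\stackrel{n}{0}})]\!]_{n,\square}$, whose image is $\mathbb{N}_{n,\square}$ by definition, and show that it is a bijection carrying $F_{n}$ to $\oplus_{n,\square}$ and $F_{n+1}$ to $\otimes_{n,\square}$. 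Once that is done, the commutative-semiring axioms for $(\mathbb{N}_{n,\square},\oplus_{n,\square},\otimes_{n,\square})$, including the closure of both operations on $\mathbb{N}_{n,\square}$, follow by transporting the corresponding properties of $(\mathord{\stackrel{n}{\mathbb{N}}},F_{n},F_{n+1})$ along $\iota_{n}$.

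The one ingredient not already delivered by Theorem~\ref{th:Fsemiring} is the absorption law $F_{n+1}(a,\mathord{\stackrel{n}{0}})=\mathord{\stackrel{n}{0}}$ for all $a\in\mathord{\stackrel{n}{\mathbb{N}}}$, which I would prove first, by induction on $n$. The base case $n=0$ is $H_{2}(a,0)=0$ from Definition~\ref{defn:hyp}, and for $n\geq 1$ the inductive step uses Definition~\ref{defn:ahyp} together with $L(\mathord{\stackrel{n}{0}})=\mathord{\stackrel{n-1}{0}}$:
\begin{equation*}
F_{n+1}(a,\mathord{\stackrel{n}{0}})=E\bigl(F_{n}(L(a),L(\mathord{\stackrel{n}{0}}))\bigr)=E\bigl(F_{n}(L(a),\mathord{\stackrel{n-1}{0}})\bigr)=E(\mathord{\stackrel{n-1}{0}})=\mathord{\stackrel{n}{0}},
\end{equation*}
the penultimate equality being the inductive hypothesis applied to $L(a)\in\mathord{\stackrel{n-1}{\mathbb{N}}}$.

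With this in hand the verification is routine. For addition, $\iota_{n}(\mathord{\stackrel{n}{a}})\oplus_{n,\square}\iota_{n}(\mathord{\stackrel{n}{b}})=[\![(F_{n}(\mathord{\stackrel{n}{a}},\mathord{\stackrel{n}{b}}),F_{n}(\mathord{\stackrel{n}{0}},\mathord{\stackrel{n}{0}}))]\!]_{n,\square}=[\![(F_{n}(\mathord{\stackrel{n}{a}},\mathord{\stackrel{n}{b}}),\mathord{\stackrel{n}{0}})]\!]_{n,\square}=\iota_{n}(F_{n}(\mathord{\stackrel{n}{a}},\mathord{\stackrel{n}{b}}))$, since $\mathord{\stackrel{n}{0}}$ is the $F_{n}$-identity; in particular $\mathbb{N}_{n,\square}$ is an $\oplus_{n,\square}$-submonoid of $\mathbb{Z}_{n,\square}$ and $\iota_{n}$ is additive. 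For multiplication, substituting $(x,y)=(\mathord{\stackrel{n}{a}},\mathord{\stackrel{n}{0}})$ and $(u,v)=(\mathord{\stackrel{n}{b}},\mathord{\stackrel{n}{0}})$ into Definition~\ref{def:biontech} and using the absorption law (and commutativity of $F_{n+1}$) to kill $F_{n+1}(\mathord{\stackrel{n}{0}},\mathord{\stackrel{n}{0}})$, $F_{n+1}(\mathord{\stackrel{n}{0}},\mathord{\stackrel{n}{b}})$ and $F_{n+1}(\mathord{\stackrel{n}{a}},\mathord{\stackrel{n}{0}})$, the product collapses to $[\![(F_{n+1}(\mathord{\stackrel{n}{a}},\mathord{\stackrel{n}{b}}),\mathord{\stackrel{n}{0}})]\!]_{n,\square}=\iota_{n}(F_{n+1}(\mathord{\stackrel{n}{a}},\mathord{\stackrel{n}{b}}))$, giving closure of $\otimes_{n,\square}$ on $\mathbb{N}_{n,\square}$ and multiplicativity of $\iota_{n}$. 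Injectivity of $\iota_{n}$ amounts to the fact that $(\mathord{\stackrel{n}{a}},\mathord{\stackrel{n}{0}})\stackrel{n}{\square}(\mathord{\stackrel{n}{b}},\mathord{\stackrel{n}{0}})$ forces $\mathord{\stackrel{n}{a}}=\mathord{\stackrel{n}{b}}$; surjectivity onto $\mathbb{N}_{n,\square}$ holds by definition; and $\iota_{n}$ sends $\mathord{\stackrel{n}{0}}$, $\mathord{\stackrel{n}{1}}$ to the neutral elements $[\![(\mathord{\stackrel{n}{0}},\mathord{\stackrel{n}{0}})]\!]_{n,\square}$, $[\![(\mathord{\stackrel{n}{1}},\mathord{\stackrel{n}{0}})]\!]_{n,\square}$. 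Hence $\iota_{n}$ is a semiring isomorphism onto $(\mathbb{N}_{n,\square},\oplus_{n,\square},\otimes_{n,\square})$, and Theorem~\ref{th:Fsemiring} finishes the argument.

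The only delicate point, and the natural place to expect trouble, is the closure of $\otimes_{n,\square}$ on $\mathbb{N}_{n,\square}$, which is exactly what the absorption law secures: without $F_{n+1}(a,\mathord{\stackrel{n}{0}})=\mathord{\stackrel{n}{0}}$ the second coordinate of a product of classes of the form $[\![(\cdot,\mathord{\stackrel{n}{0}})]\!]_{n,\square}$ need not be absorbed, so the result could leave $\mathbb{N}_{n,\square}$. Everything else is bookkeeping. An equivalent route is an induction on $n$ with $E_{\square}$ as the isomorphism generator: by Propositions~\ref{prop:piscis} and \ref{prop:aries}, $E_{\square}$ restricts to a bijection $\mathbb{N}_{n-1,\square}\to\mathbb{N}_{n,\square}$ respecting $\oplus$ and $\otimes$, reducing the claim to $n=0$, where $(\mathbb{N}_{0,\square},\oplus_{0,\square},\otimes_{0,\square})\cong(\mathbb{N},H_{1},H_{2})$ via $\iota_{0}$.
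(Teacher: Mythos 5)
Your proposal is correct and follows essentially the same route as the paper: the paper's proof likewise transports the semiring structure of $(\mathord{\stackrel{n}{\mathbb{N}}},F_{n},F_{n+1})$ along $x\mapsto[\![(x,\mathord{\stackrel{n}{0}})]\!]_{n,\square}$, asserting exactly your two identities $x_{n,\square}\oplus_{n,\square}y_{n,\square}=[\![(F_{n}(x,y),\mathord{\stackrel{n}{0}})]\!]_{n,\square}$ and $x_{n,\square}\otimes_{n,\square}y_{n,\square}=[\![(F_{n+1}(x,y),\mathord{\stackrel{n}{0}})]\!]_{n,\square}$. Your only addition is to verify these explicitly, in particular the absorption law $F_{n+1}(a,\mathord{\stackrel{n}{0}})=\mathord{\stackrel{n}{0}}$, which the paper uses implicitly; that detail is correct and welcome but does not change the argument.
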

\begin{proof}
The semiring structure in $(\mathbb{N}_{n,\square},\oplus_{n,\square},\otimes_{n,\square})$ is induced by $(\mathord{\stackrel{n}{\mathbb{N}}},F_{n},F_{n+1})$, in account that 
\begin{subequations}
\begin{align}
x_{n,\square}\oplus_{n,\square}y_{n,\square}&=[\![(F_{n}(x,y),\mathord{\stackrel{n}{0}})]\!]_{n,\square},\\
x_{n,\square}\otimes_{n,\square}y_{n,\square}&=[\![(F_{n+1}(x,y),\mathord{\stackrel{n}{0}})]\!]_{n,\square}.
\end{align}
\end{subequations}
\end{proof}
%%\url{https://en.wikipedia.org/wiki/Additive_inverse#Other_properties}\\
\begin{remark}
%Let $a\in(\mathbb{Z}_{n,\square},F_{n},F_{n+1})$.  The reader can prove the following properties of the additive inverse:
%%}
%PROVING THE FOLLOWING PROPERTIES REQUIRES:
%% 1) Associativity of \oplus_{n,\square} in \mathbb{Z}_{n,\square}
%% Proof: Straightforward since \oplus_{n,\square} is direct product
%% of an assoc operation.
%% 2) Additive identity annihilates $\mathbb{Z}_{n,\square}$
%% Proof: It follows from $(\mathbb{N}_{n,\square},\oplus_{n,\square},\otimes_{n,\square})$ being a semiring.
The additive inverse $T_{n,\square}$ in $(\mathbb{Z}_{n,\square},\oplus_{n,\square},\otimes_{n,\square})$, exhibits the following properties:
\begin{subequations}\label{eq:ties-argh}
\begin{gather} %% Spacing \: is that of binary operators
%%\url{https://sharelatex.psi.ch/learn/Spacing_in_math_mode}
T_{n,\square}\:(T_{n,\square}\:a)=a,\label{ties-argh:1}\\
%%
%%\begin{proof} 
%% Abusing notation: \oplus_{n,\square}=+_{n}
%% and \otimes_{n,\square}= x.
%%\begin{subequations}
%%\begin{align}
%%(T_{n,\square}(T_{n,\square}a))\oplus_{n,\square}(T_{n,\square}a)&=\;\mathord{\stackrel{n}{0}}\\
%%((T_{n,\square}(T_{n,\square}a))\oplus_{n,\square}T_{n,\square}a)\oplus_{n,\square}a&=\mathord{\stackrel{n}{0}}\oplus_{n,\square}a\\
%%
%% Using associativity of \oplus_{n,\square} in \mathbb{Z}_{n,\square}... 
%% [it was proven that (Zn,\oplus_{n,\square}) is abelian group.]
%%
%%(T_{n,\square}(T_{n,\square}a))\oplus_{n,\square}((T_{n,\square}a)\oplus_{n,\square}a)&=a\\
%%(T_{n,\square}(T_{n,\square}a))\oplus_{n,\square}\mathord{\stackrel{n}{0}}&=a\\
%%T_{n,\square}(T_{n,\square}a)&=a.
%%\end{align}
%%\end{subequations}
%%\end{proof}
%%
(T_{n,\square}\:a)\oplus_{n,\square}(T_{n,\square}\:b)=T_{n,\square}\:(a\oplus_{n,\square}b),\label{ties-argh:2}\\
%%
%%\begin{proof}
%%\begin{subequations}
%%\begin{align}
%%((T_{n,\square}a)\oplus_{n,\square}(T_{n,\square}b))\oplus_{n,\square}(a\oplus_{n,\square}b)&=\;\mathord{\stackrel{n}{0}}\\
%%(((T_{n,\square}a)\oplus_{n,\square}(T_{n,\square}b))\oplus_{n,\square}(a\oplus_{n,\square}b)),T_{n,\square}(a\oplus_{n,\square}b)&=\mathord{\stackrel{n}{0}}\oplus_{n,\square}T_{n,\square}(a\oplus_{n,\square}b)\\
%% Associativity of \oplus_{n,\square} in Zn...
%%((T_{n,\square}a)\oplus_{n,\square}T_{n,\square}b)\oplus_{n,\square}((a\oplus_{n,\square}b)\oplus_{n,\square}T_{n,\square}(a\oplus_{n,\square}b))&=(\mathord{\stackrel{n}{0}}\oplus_{n,\square}T_{n,\square}(a\oplus_{n,\square}b))\\
%%((T_{n,\square}a)\oplus_{n,\square}(T_{n,\square}b))\oplus_{n,\square}\mathord{\stackrel{n}{0}}&=T_{n,\square}(a\oplus_{n,\square}b)\\
%%(T_{n,\square}a)\oplus_{n,\square}T_{n,\square}b&=T_{n,\square}(a\oplus_{n,\square}b).\\
%%\end{align}
%%\end{subequations}
%%\end{proof}
%% 
(T_{n,\square}\:a)\otimes_{n,\square}b=a\otimes_{n,\square}(T_{n,\square}\:b)=T_{n,\square}\:(a\otimes_{n,\square}b),\label{ties-argh:3}\\ 
%%
%%\begin{proof}
%%\begin{subequations}
%%\begin{align}
%%F_{n}(T_{n,\square}F_{n+1}(a,b),F_{n+1}(a,b))&=\;\mathord{\stackrel{n}{0}}\\
%%F_{n}(F_{n}(T_{n,\square}F_{n+1}(a,b),F_{n+1}(a,b)),F_{n+1}(T_{n,\square}a,b))&=F_{n}(\mathord{\stackrel{n}{0}},F_{n+1}(T_{n,\square}a,b))\\
%%F_{n}(T_{n,\square}F_{n+1}(a,b),F_{n}(F_{n+1}(a,b),F_{n+1}(T_{n,\square}a,b)))&=F_{n+1}(T_{n,\square}a,b)\\
%%F_{n}(T_{n,\square}F_{n+1}(a,b),F_{n+1}(F_{n}(a,T_{n,\square}a),b))&=F_{n+1}(T_{n,\square}a,b)\\
%%F_{n}(T_{n,\square}F_{n+1}(a,b),F_{n+1}(\mathord{\stackrel{n}{0}},b))&=F_{n+1}(T_{n,\square}a,b)\\
%% Absorbing element...
%%F_{n}(T_{n,\square}F_{n+1}(a,b),\mathord{\stackrel{n}{0}})&=F_{n+1}(T_{n,\square}a,b)\\
%%T_{n,\square}F_{n+1}(a,b)&=F_{n+1}(T_{n,\square}a,b).
%%\end{align}
%%\end{subequations}
%%Similarly one proves that, $F_{n+1}(a,T_{n,\square}b)=~T_{n,\square}F_{n+1}(a,b)$.
%%\end{proof}
%%
%F_{n+1}(T_{n,\square}a,T_{n,\square}b)&=F_{n+1}(a,b).\label{ties-argh:4}\\
(T_{n,\square}\:a)\otimes_{n,\square}(T_{n,\square}\:b)=a\otimes_{n,\square}b.\label{ties-argh:4}
%%
%%\begin{proof}
%%\begin{subequations}
%%\begin{align}
%%F_{n+1}(T_{n,\square}a,T_{n,\square}b)&=~T_{n,\square}(T_{n,\square}F_{n+1}(T_{n,\square}a,T_{n,\square}b))\\
%%&=~T_{n,\square}F_{n+1}(T_{n,\square}(T_{n,\square}a),T_{n,\square}b)\\
%%&=~T_{n,\square}F_{n+1}(a,T_{n,\square}b)\\
%%&=F_{n+1}(a,T_{n,\square}(T_{n,\square}b))\\
%%&=F_{n+1}(a,b).
%%\end{align}
%%\end{subequations}
%%\end{proof}
%%The properties in Eq.~\eqref{ties-argh} reduce any computation in $(\mathbb{Z}_{n,\square},\oplus_{n,\square},\otimes_{n,\square})$ to a computation in $(\mathbb{N}_{n,\square},\oplus_{n,\square},\otimes_{n,\square})$, up to the overall action of the additive inverse $T_{n,\square}$.
\end{gather}
\end{subequations}
\end{remark}
\begin{proposition}\label{prop:luigi}
The tuple $(\mathbb{Z}_{n,\square},\otimes_{n,\square})$ is a commutative monoid.
\end{proposition}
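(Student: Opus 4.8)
The plan is to obtain the statement by descent from the product monoid. I would first show that $(\mathord{\stackrel{n}{\mathbb{N}}}\times\mathord{\stackrel{n}{\mathbb{N}}},\otimes_{n})$ is a commutative monoid, and then pass to the quotient: the proposition immediately preceding the definition of $\otimes_{n,\square}$ already shows that $\stackrel{n}{\square}$ is compatible with $\otimes_{n}$, so $\otimes_{n,\square}$ is well defined and every monoid identity that holds for $\otimes_{n}$ on representatives descends verbatim to $\mathbb{Z}_{n,\square}$, the identity element being $1_{n,\square}=[\![(\mathord{\stackrel{n}{1}},\mathord{\stackrel{n}{0}})]\!]_{n,\square}$.

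Commutativity of $\otimes_{n}$ is immediate from Definition~\ref{def:biontech}: exchanging the two arguments and applying commutativity of $F_{n}$ and of $F_{n+1}$ (available since $(\mathord{\stackrel{n}{\mathbb{N}}},F_{n},F_{n+1})$ is a commutative semiring by Theorem~\ref{th:Fsemiring}) leaves each of the two components of the defining formula unchanged. For the identity I would compute $(x,y)\otimes_{n}(\mathord{\stackrel{n}{1}},\mathord{\stackrel{n}{0}})$ directly: since $\mathord{\stackrel{n}{1}}$ is the $F_{n+1}$-identity, $\mathord{\stackrel{n}{0}}$ is the $F_{n}$-identity, and $\mathord{\stackrel{n}{0}}$ is absorbing for $F_{n+1}$ (this last fact follows from Corollary~\ref{coro:cromwell} together with the clause ``$m=2$ and $b=\mathord{\stackrel{n}{0}}$'' of Definition~\ref{def:liverpool}), both components reduce to $(x,y)$; commutativity then upgrades this to a two-sided identity.

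The only substantive computation is associativity of $\otimes_{n}$. I would expand both $((x,y)\otimes_{n}(u,v))\otimes_{n}(s,t)$ and $(x,y)\otimes_{n}((u,v)\otimes_{n}(s,t))$ by Definition~\ref{def:biontech}; after using distributivity of $F_{n+1}$ over $F_{n}$ each side has, in each of its two components, an $F_{n}$-sum of four $F_{n+1}$-products of three factors, and associativity and commutativity of $F_{n}$ and $F_{n+1}$ then exhibit the two sides as the same element of $\mathord{\stackrel{n}{\mathbb{N}}}\times\mathord{\stackrel{n}{\mathbb{N}}}$. All the algebraic laws used are precisely those granted by Theorem~\ref{th:Fsemiring}, so this is nothing but the classical verification that multiplication of the integers constructed from the naturals is associative, with $(+,\times)$ replaced by $(F_{n},F_{n+1})$; I expect the bookkeeping of this expansion to be the only (purely routine) obstacle, and I note that one obtains literal equality of pairs, so the congruence is not invoked at this stage.

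Assembling these: $(\mathord{\stackrel{n}{\mathbb{N}}}\times\mathord{\stackrel{n}{\mathbb{N}}},\otimes_{n})$ is a commutative monoid, $\stackrel{n}{\square}$ is a congruence on it, hence $(\mathbb{Z}_{n,\square},\otimes_{n,\square})$ is a commutative monoid with identity $1_{n,\square}$. As an alternative to the explicit associativity expansion, one may instead induct on $n$: by Proposition~\ref{prop:aries} the bijection $E_{\square}\colon\mathbb{Z}_{n-1,\square}\to\mathbb{Z}_{n,\square}$ intertwines $\otimes_{n-1,\square}$ and $\otimes_{n,\square}$, so it transports the commutative-monoid structure from level $n-1$ to level $n$ (with $1_{n,\square}=E_{\square}(1_{n-1,\square})$), leaving only the base case $n=0$, which is again the direct computation above specialised to $(\mathbb{N},H_{1},H_{2})$ from Proposition~\ref{prop:Hsemiring}.
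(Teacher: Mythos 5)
Your proof is correct, but it follows a different route from the paper's. The paper does not work with representatives at all: it first shows (Proposition~\ref{prop:mario}) that $(\mathbb{N}_{n,\square},\otimes_{n,\square})$ is a commutative monoid, induced directly from the semiring $(\mathord{\stackrel{n}{\mathbb{N}}},F_{n},F_{n+1})$, and then argues that by the sign rules for the involution $T_{n,\square}$ listed in Eq.~\eqref{eq:ties-argh} every computation in $(\mathbb{Z}_{n,\square},\otimes_{n,\square})$ reduces to one in $(\mathbb{N}_{n,\square},\otimes_{n,\square})$ up to an overall application of $T_{n,\square}$. You instead verify the monoid axioms for $\otimes_{n}$ on $\mathord{\stackrel{n}{\mathbb{N}}}\times\mathord{\stackrel{n}{\mathbb{N}}}$ itself (the classical four-term expansion for associativity, commutativity componentwise, and the identity $(\mathord{\stackrel{n}{1}},\mathord{\stackrel{n}{0}})$) and then descend through the congruence $\stackrel{n}{\square}$, whose compatibility with $\otimes_{n}$ is exactly the proposition following Definition~\ref{def:biontech}. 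Your approach is more self-contained and makes explicit a point the paper leaves implicit: the identity computation needs $\mathord{\stackrel{n}{0}}$ to be absorbing for $F_{n+1}$, which is not verified in Theorem~\ref{th:Fsemiring}, and your justification via Corollary~\ref{coro:cromwell} and the $m=2$, $b=\mathord{\stackrel{n}{0}}$ clause of Definition~\ref{def:liverpool} fills that in correctly; the paper's argument is shorter but leans on the unproved remark \eqref{eq:ties-argh} and a somewhat informal ``up to sign'' reduction. Your alternative closing argument, transporting the structure along the isomorphism $E_{\square}$ of Proposition~\ref{prop:aries} with base case $(\mathbb{N},H_{1},H_{2})$, is also sound and is closer in spirit to how the paper handles other level-$n$ statements, though the paper does not use it here.
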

\begin{proof}
From Proposition~\ref{prop:mario} the tuple $(\mathbb{N}_{n,\square},\otimes_{n,\square})$ is a commutative monoid.  From the properties in Eq.~\eqref{eq:ties-argh} any computation in $(\mathbb{Z}_{n,\square},\otimes_{n,\square})$ reduces to a computation in $(\mathbb{N}_{n,\square},\otimes_{n,\square})$, up to the overall action of the additive inverse $T_{n,\square}$.
\end{proof}
\begin{theorem}\label{th:taurus}
The tuple $(\mathbb{Z}_{n,\square},\oplus_{n,\square},\otimes_{n,\square})$ is an integral domain.
\end{theorem}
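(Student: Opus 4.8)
The plan is to proceed by induction on $n$, exploiting the fact --- already recorded in Propositions~\ref{prop:piscis} and \ref{prop:aries} --- that $E_{\square}$ intertwines both $\oplus$ and $\otimes$ across consecutive levels. By Proposition~\ref{prop:kamikaze} the tuple $(\mathbb{Z}_{n,\square},\oplus_{n,\square})$ is already an abelian group, and by Proposition~\ref{prop:luigi} the tuple $(\mathbb{Z}_{n,\square},\otimes_{n,\square})$ is already a commutative monoid; so the only ring axiom still missing is distributivity over the whole of $\mathbb{Z}_{n,\square}$ (Proposition~\ref{prop:mario} gives it only on $\mathbb{N}_{n,\square}$), after which one must verify $1_{n,\square}\neq 0_{n,\square}$ and the absence of zero divisors.

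For the base case $n=0$, I would observe that $\mathord{\stackrel{0}{\mathbb{N}}}=\mathbb{N}$ with $F_{0}$ and $F_{1}$ being ordinary addition and multiplication, so that the congruence $\stackrel{0}{\square}$ and the operations $\oplus_{0}$, $\otimes_{0}$ reduce verbatim to the classical construction of the ring of integers from $\mathbb{N}$ by ordered pairs; hence $(\mathbb{Z}_{0,\square},\oplus_{0,\square},\otimes_{0,\square})$ is the usual ring $\mathbb{Z}$, a well-known integral domain, and in particular $1_{0,\square}\neq 0_{0,\square}$.

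For the inductive step, assume $(\mathbb{Z}_{n-1,\square},\oplus_{n-1,\square},\otimes_{n-1,\square})$ is an integral domain. By Definition~\ref{def:brexit} and the remark following it, $E_{\square}$ restricts to a bijection of $\mathbb{Z}_{n-1,\square}$ onto $\mathbb{Z}_{n,\square}$ with inverse $L_{\square}$, and applying $L_{\square}$ to the identities of Propositions~\ref{prop:piscis} and \ref{prop:aries} shows that $L_{\square}$ (hence $E_{\square}$) is a homomorphism for both operations. Thus $E_{\square}$ is a ring isomorphism $\mathbb{Z}_{n-1,\square}\to\mathbb{Z}_{n,\square}$; since it is injective and sends the multiplicative identity to the multiplicative identity, and since being an integral domain is invariant under ring isomorphism, distributivity, $1_{n,\square}\neq 0_{n,\square}$, and the absence of zero divisors are all transported from level $n-1$, completing the induction.

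The main point to get right is the bookkeeping in the inductive step: one must be careful that the identities of Propositions~\ref{prop:piscis} and \ref{prop:aries}, stated there for arguments ranging over $\mathbb{Z}_{n,\square}$, genuinely yield $E_{\square}(a')\oplus_{n,\square}E_{\square}(b')=E_{\square}(a'\oplus_{n-1,\square}b')$ and the analogous identity for $\otimes$ as $a',b'$ range over $\mathbb{Z}_{n-1,\square}$, so that $E_{\square}$ is a bona fide ring isomorphism and not merely an operation-preserving injection onto a subset. As an alternative to the isomorphism argument one could argue directly, using the identities of Eq.~\eqref{eq:ties-argh} to reduce each instance of distributivity, and any equation $a\otimes_{n,\square}b=0_{n,\square}$, to a statement in the semiring $\mathbb{N}_{n,\square}$ of Proposition~\ref{prop:mario} up to the overall action of $T_{n,\square}$, and then invoking the injectivity of $E$ together with cancellativity in $(\mathord{\stackrel{n}{\mathbb{N}}},F_{n})$ to see that $F_{n+1}$ has no zero divisors on $\mathord{\stackrel{n}{\mathbb{N}}}$; the awkward part of that route is the sign case analysis required for the zero-divisor condition, which the isomorphism approach sidesteps entirely.
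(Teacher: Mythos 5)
Your proof is correct, but it follows a genuinely different route from the paper's. You argue by induction on the level: the base case identifies $(\mathbb{Z}_{0,\square},\oplus_{0,\square},\otimes_{0,\square})$ with the classical ordered-pair construction of the integers from $(\mathbb{N},+,\times)$, and the inductive step transports the integral-domain structure along $E_{\square}$, which Propositions~\ref{prop:kiev}, \ref{prop:piscis} and \ref{prop:aries} (together with the surjectivity $\mathbb{Z}_{n,\square}=E_{\square}(\mathbb{Z}_{n-1,\square})$) indeed exhibit as a bijective homomorphism for both operations, so distributivity, $1_{n,\square}\neq 0_{n,\square}$ and the absence of zero divisors all transfer; your cautionary remark about rewriting the identities with arguments $a',b'\in\mathbb{Z}_{n-1,\square}$ is exactly the point that makes this legitimate. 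The paper instead works directly and uniformly at each level $n$: starting from Propositions~\ref{prop:kamikaze} and \ref{prop:luigi} as you do, it proves distributivity by splitting into the eight sign cases, verifying a representative case on representatives $(x,\mathord{\stackrel{n}{0}})$, $(y,\mathord{\stackrel{n}{0}})$, $(\mathord{\stackrel{n}{0}},z)$ and reducing the rest to the semiring $(\mathbb{N}_{n,\square},\oplus_{n,\square},\otimes_{n,\square})$ of Proposition~\ref{prop:mario} via the involution identities \eqref{eq:ties-argh}, and it rules out zero divisors directly from the defining formula of $\otimes_{n}$ in Definition~\ref{def:biontech} on $\mathord{\stackrel{n}{\mathbb{N}}}$ --- essentially the ``alternative'' you sketch and discard in your last paragraph. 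Your isomorphism argument buys a cleaner proof: no sign case analysis, all genuine work concentrated at level $0$ where the result is classical, and it makes explicit that the rings at all levels are isomorphic (a fact the paper only records in remarks); what it costs is the reliance on the base-case identification and on the well-definedness bookkeeping for $E_{\square}$, whereas the paper's argument is self-contained, uniform in $n$, and yields explicit formulas at every level.
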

\begin{proof}
From Proposition~\ref{prop:kamikaze} the tuple $(\mathbb{Z}_{n,\square},\oplus_{n,\square})$ is an abelian group and from Proposition~\ref{prop:luigi} the tuple $(\mathbb{Z}_{n,\square},\otimes_{n,\square})$ is a commutative monoid.  The tuple $(\mathbb{Z}_{n,\square},\oplus_{n,\square},\otimes_{n,\square})$ is a commutative ring, if it fulfills distributivity.  Let $a,b,c\in\mathbb{N}_{n,\square}$. It is sufficient to prove the following eight cases:
\begin{subequations}
\begin{align}
a\otimes_{n,\square}(b\oplus_{n,\square}c)&=(a\otimes_{n,\square}b)\oplus_{n,\square}(a\otimes_{n,\square}c),\label{eq:disco-1}\\
a\otimes_{n,\square}(b\oplus_{n,\square}(T_{n,\square}\:c))&=(a\otimes_{n,\square}b)\oplus_{n,\square}(a\otimes_{n,\square}(T_{n,\square}\:c)),\label{eq:disco-2}\\
a\otimes_{n,\square}((T_{n,\square}\:b)\oplus_{n,\square}c)&=(a\otimes_{n,\square}(T_{n,\square}\:b))\oplus_{n,\square}(a\otimes_{n,\square}c),\label{eq:disco-3}\\
a\otimes_{n,\square}((T_{n,\square}\:b)\oplus_{n,\square}(T_{n,\square}\:c))&=(a\otimes_{n,\square}(T_{n,\square}\:b))\oplus_{n,\square}(a\otimes_{n,\square}(T_{n,\square}\:c)),\label{eq:disco-4}\\
(T_{n,\square}\:a)\otimes_{n,\square}(b\oplus_{n,\square}c)&=((T_{n,\square}\:a)\otimes_{n,\square}b)\oplus_{n,\square}((T_{n,\square}\:a)\otimes_{n,\square}c),\label{eq:disco-5}\\
(T_{n,\square}\:a)\otimes_{n,\square}(b\oplus_{n,\square}(T_{n,\square}\:c))&=((T_{n,\square}\:a)\otimes_{n,\square}b)\oplus_{n,\square}((T_{n,\square}\:a)\otimes_{n,\square}(T_{n,\square}\:c)),\label{eq:disco-6}\\
(T_{n,\square}\:a)\otimes_{n,\square}((T_{n,\square}\:b)\oplus_{n,\square}c)&=((T_{n,\square}\:a)\otimes_{n,\square}(T_{n,\square}\:b))\oplus_{n,\square}((T_{n,\square}\:a)\otimes_{n,\square}c),\label{eq:disco-7}\\
(T_{n,\square}\:a)\otimes_{n,\square}((T_{n,\square}\:b)\oplus_{n,\square}(T_{n,\square}\:c))&=((T_{n,\square}\:a)\otimes_{n,\square}(T_{n,\square}\:b))\oplus_{n,\square}((T_{n,\square}\:a)\otimes_{n,\square}(T_{n,\square}\:c)).\label{eq:disco-8}
\end{align}
\end{subequations}
Let $x,y,z\in\mathord{\stackrel{n}{\mathbb{N}}}$ such that $a=[\![(x,\mathord{\stackrel{n}{0}})]\!]_{n,\square},b=[\![(y,\mathord{\stackrel{n}{0}})]\!]_{n,\square},c=[\![(z,\mathord{\stackrel{n}{0}})]\!]_{n,\square}$.  The Eq.~\eqref{eq:disco-2} becomes,
\begin{subequations}
\begin{gather}
[\![(x,\mathord{\stackrel{n}{0}})]\!]_{n,\square}\otimes_{n,\square}([\![(y,\mathord{\stackrel{n}{0}})]\!]_{n,\square}\oplus_{n,\square}[\![(\mathord{\stackrel{n}{0}},z)]\!]_{n,\square})\\
=[\![(x,\mathord{\stackrel{n}{0}})]\!]_{n,\square}\otimes_{n,\square}[\![(y,z)]\!]_{n,\square}\\
=[\![(F_{n+1}(x,y),F_{n+1}(x,z))]\!]_{n,\square}\\
=[\![(F_{n+1}(x,y),\mathord{\stackrel{n}{0}})]\!]_{n,\square}\oplus_{n,\square}[\![(\mathord{\stackrel{n}{0}},F_{n+1}(x,z))]\!]_{n,\square}\\
=([\![(x,\mathord{\stackrel{n}{0}})]\!]_{n,\square}\otimes_{n,\square}[\![(y,\mathord{\stackrel{n}{0}})]\!]_{n,\square})\oplus_{n,\square}([\![(x,\mathord{\stackrel{n}{0}})]\!]_{n,\square}\otimes_{n,\square}[\![(\mathord{\stackrel{n}{0}},z)]\!]_{n,\square}).
\end{gather}
\end{subequations}
Similarly, one proves Eqs.~\eqref{eq:disco-3}, \eqref{eq:disco-6} and \eqref{eq:disco-7}.  The remaining cases follow from the fact that $(\mathbb{N}_{n,\square},\oplus_{n,\square},\otimes_{n,\square})$ is a commutative semiring and from the properties in Eq.~\eqref{eq:ties-argh}.  To prove that the commutative ring $(\mathbb{Z}_{n,\square},\oplus_{n,\square},\otimes_{n,\square})$ is an integral domain one has to prove that the multiplication $\otimes_{n,\square}$ is closed in $\mathbb{Z}_{n,\square}\setminus\{0_{n,\square}\}$.  If follows from the fact that the equations
%% We already know Z is closed under \otimes
%% hence only way that Z\setminus\{0\} is not close
%% is if there is no trivial solution to a\times b=0.
\begin{equation}
F_{n}(F_{n+1}(x,u),F_{n+1}(y,v))=\mathord{\stackrel{n}{0}}\quad
\text{and}\quad
F_{n}(F_{n+1}(y,u),F_{n+1}(x,v))=\mathord{\stackrel{n}{0}},
\end{equation}
from Definition~\ref{def:biontech}, have no solutions in 
$\mathord{\stackrel{n}{\mathbb{N}}}$, except for $(x,y)$ or $(u,v)$ equal to $(0,0)$.
\end{proof}
\begin{remark}
From the properties in Eq.~\eqref{eq:ties-argh}, one obtains
\begin{align}
T_{n,\square}\:a
%%MID-STEP: &=F_{n+1}(1_{n},T_{n,\square}a)\\
%&=F_{n+1}(T_{n,\square}1_{n},a),
&=(T_{n,\square}\:1_{n,\square})\otimes_{n,\square}a.
\end{align}
generalizing the notion from $n=0$ that \textit{any number changes sign after multiplying by the negative unit.}
\end{remark}
\begin{remark}
The isomorphism $E_{\square}$ is nonexponential as defined in the last paragraph of section~\ref{sec:semirings}, since $\mathbb{Z}_{n,\square}\cap\mathbb{Z}_{n+1,\square}=\emptyset$.
%% Therefore the equation Ad_{f^{n+1}}(\oplus_{n,\square})=Ad_{f^{n}}(\otimes_{n,\square}) has no domain where to be tested.
\end{remark}
%% For every Integral Domain there is a ring of quotients.
%\end{remark}
%%\url{https://proofwiki.org/wiki/Existence_of_Field_of_Quotients}
%%\url{https://proofwiki.org/wiki/Inverse_Completion_of_Integral_Domain_Exists}

%%\url{https://proofwiki.org/wiki/Existence_of_Field_of_Quotients}
   \subsection{Construction of fields of quotients from integral domains}
   \begin{notation}
Denote $\mathbb{Z}_{n,\square}^{\ast}=\mathbb{Z}_{n,\square}\setminus\{0_{n,\square}\}$.
\end{notation}
\begin{definition}\label{def:akira}
Define the direct product $(\mathbb{Z}_{n,\square}\times\mathbb{Z}_{n,\square}^{\ast},\otimes'_{n})$ such that
\begin{gather}
\forall(x,y),(u,v)\in\mathbb{Z}_{n,\square}\times\mathbb{Z}_{n,\square}^{\ast}:\nonumber\\
(x,y)\otimes'_{n}(u,v)=(x\otimes_{n,\square}u,y\otimes_{n,\square}v).
\end{gather}
\end{definition}
\begin{definition}\label{def:batman}
Define the function $\vec{E}_{\square}$ in $\bigcup_{n\in\mathbb{N}}\mathbb{Z}_{n,\square}\times\mathbb{Z}_{n,\square}^{\ast}$ as
\begin{equation}
\vec{E}_{\square}(x,y)=(E_{\square}(x),E_{\square}(y)).
\end{equation}
Denote by $\vec{L}_{\square}$ its inverse function.
\end{definition}
\begin{remark}
Note that $\mathbb{Z}_{n,\square}\times\mathbb{Z}_{n,\square}^{\ast}=\vec{E}_{\square}(\mathbb{Z}_{n-1,\square}\times\mathbb{Z}_{n-1,\square}^{\ast})$ and
\begin{equation}\label{eq:thomas}
(x,y)\otimes'_{n}(u,v)=\vec{E}_{\square}(\vec{L}_{\square}(x,y)\otimes'_{n-1}\vec{L}_{\square}(u,v)).
\end{equation}
\end{remark}
\begin{definition}
Define the binary operation
\begin{gather}
\oplus'_{n}:\mathbb{Z}_{n,\square}\times\mathbb{Z}_{n,\square}^{\ast}\rightarrow\mathbb{Z}_{n,\square}\times\mathbb{Z}_{n,\square}^{\ast}:\nonumber\\
(x,y)\oplus'_{n}(u,v)=((x\otimes_{n,\square}v)\oplus_{n,\square}(y\otimes_{n,\square}u),y\otimes_{n,\square}v).
\end{gather}
\end{definition}
\begin{proposition}\label{prop:river}
The tuple $(\mathbb{Z}_{n,\square}\times\mathbb{Z}_{n,\square}^{\ast},\otimes'_{n})$ is a commutative monoid.
\end{proposition}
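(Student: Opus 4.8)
The plan is to observe that $\otimes'_{n}$ acts coordinatewise, so $(\mathbb{Z}_{n,\square}\times\mathbb{Z}_{n,\square}^{\ast},\otimes'_{n})$ is essentially a sub-direct-product of two copies of the commutative monoid $(\mathbb{Z}_{n,\square},\otimes_{n,\square})$ provided by Proposition~\ref{prop:luigi}. Consequently the monoid axioms reduce to coordinatewise bookkeeping, and the only genuinely substantive point is closure in the second coordinate, which is exactly where the integral domain property enters.

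First I would check closure. Given $(x,y),(u,v)\in\mathbb{Z}_{n,\square}\times\mathbb{Z}_{n,\square}^{\ast}$, the first coordinate $x\otimes_{n,\square}u$ lies in $\mathbb{Z}_{n,\square}$ because $\otimes_{n,\square}$ is a binary operation on $\mathbb{Z}_{n,\square}$. For the second coordinate, $y,v\in\mathbb{Z}_{n,\square}^{\ast}=\mathbb{Z}_{n,\square}\setminus\{0_{n,\square}\}$, and since $(\mathbb{Z}_{n,\square},\oplus_{n,\square},\otimes_{n,\square})$ is an integral domain by Theorem~\ref{th:taurus}, it has no zero divisors; hence $y\otimes_{n,\square}v\neq 0_{n,\square}$, i.e.\ $y\otimes_{n,\square}v\in\mathbb{Z}_{n,\square}^{\ast}$. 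Thus $(x,y)\otimes'_{n}(u,v)\in\mathbb{Z}_{n,\square}\times\mathbb{Z}_{n,\square}^{\ast}$.

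Next, associativity and commutativity of $\otimes'_{n}$ follow at once by applying the corresponding properties of $\otimes_{n,\square}$ (Proposition~\ref{prop:luigi}) in each coordinate separately. For the identity, I would take $(1_{n,\square},1_{n,\square})$, noting first that $1_{n,\square}\in\mathbb{Z}_{n,\square}^{\ast}$: in an integral domain the multiplicative identity differs from the additive identity, so $1_{n,\square}\neq 0_{n,\square}$. Then $(x,y)\otimes'_{n}(1_{n,\square},1_{n,\square})=(x\otimes_{n,\square}1_{n,\square},\,y\otimes_{n,\square}1_{n,\square})=(x,y)$, and the left identity is identical by commutativity.

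I do not anticipate any real obstacle here. The only places requiring a moment's care are the appeal to the absence of zero divisors (Theorem~\ref{th:taurus}) to keep the second coordinate inside $\mathbb{Z}_{n,\square}^{\ast}$, and the observation that $1_{n,\square}\neq 0_{n,\square}$ so that the proposed identity element genuinely lies in the carrier set. One could alternatively transport the statement down to the case $n=0$ along $\vec{E}_{\square}$ using Eq.~\eqref{eq:thomas}, but the direct coordinatewise verification is the shortest route.
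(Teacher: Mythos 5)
Your proof is correct and follows essentially the same route as the paper, which simply observes that $(\mathbb{Z}_{n,\square}\times\mathbb{Z}_{n,\square}^{\ast},\otimes'_{n})$ is the direct product of the commutative monoids $(\mathbb{Z}_{n,\square},\otimes_{n,\square})$ and $(\mathbb{Z}_{n,\square}^{\ast},\otimes_{n,\square})$; your version merely makes explicit the two points the paper leaves implicit, namely that the absence of zero divisors (Theorem~\ref{th:taurus}) keeps the second coordinate in $\mathbb{Z}_{n,\square}^{\ast}$ and that $1_{n,\square}\neq 0_{n,\square}$.
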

\begin{proof}
It is a direct product of the commutative monoids $(\mathbb{Z}_{n,\square},\otimes_{n,\square})$ and $(\mathbb{Z}_{n,\square}^{\ast},\otimes_{n,\square})$.
\end{proof}
\begin{proposition}\label{prop:boca}
The tuple $(\mathbb{Z}_{n,\square}\times\mathbb{Z}_{n,\square}^{\ast},\oplus'_{n})$ is a commutative monoid.
\end{proposition}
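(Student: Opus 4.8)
The plan is to verify the monoid axioms for $\oplus'_{n}$ directly, each of them reducing to the commutative ring axioms of $(\mathbb{Z}_{n,\square},\oplus_{n,\square},\otimes_{n,\square})$ supplied by Theorem~\ref{th:taurus}. To lighten notation write $\oplus$ for $\oplus_{n,\square}$ and $\otimes$ for $\otimes_{n,\square}$. For closure, let $(x,y),(u,v)\in\mathbb{Z}_{n,\square}\times\mathbb{Z}_{n,\square}^{\ast}$; the first coordinate $(x\otimes v)\oplus(y\otimes u)$ lies in $\mathbb{Z}_{n,\square}$ at once, while the second coordinate $y\otimes v$ lies in $\mathbb{Z}_{n,\square}^{\ast}$ because an integral domain has no zero divisors, so a product of two elements of $\mathbb{Z}_{n,\square}^{\ast}$ is again in $\mathbb{Z}_{n,\square}^{\ast}$.

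Commutativity of $\oplus'_{n}$ is immediate: interchanging $(x,y)$ and $(u,v)$ in the defining formula and invoking commutativity of $\oplus$ and $\otimes$ returns the same pair. The substantive step is associativity. I would expand both $\bigl((x,y)\oplus'_{n}(u,v)\bigr)\oplus'_{n}(s,t)$ and $(x,y)\oplus'_{n}\bigl((u,v)\oplus'_{n}(s,t)\bigr)$ by applying the defining formula twice and then using associativity and commutativity of $\otimes$ together with distributivity of $\otimes$ over $\oplus$; both sides collapse to the common pair $\bigl((x\otimes v\otimes t)\oplus(y\otimes u\otimes t)\oplus(y\otimes v\otimes s),\ y\otimes v\otimes t\bigr)$. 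This is the only real obstacle, but it is pure bookkeeping: no new idea beyond the ring axioms is needed.

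It remains to produce the identity element, namely $(0_{n,\square},1_{n,\square})$, which belongs to $\mathbb{Z}_{n,\square}\times\mathbb{Z}_{n,\square}^{\ast}$ since $1_{n,\square}\neq 0_{n,\square}$ in the integral domain. One computes $(x,y)\oplus'_{n}(0_{n,\square},1_{n,\square})=\bigl((x\otimes 1_{n,\square})\oplus(y\otimes 0_{n,\square}),\,y\otimes 1_{n,\square}\bigr)=(x\oplus 0_{n,\square},\,y)=(x,y)$, using the multiplicative identity, the fact that $a\otimes 0_{n,\square}=0_{n,\square}$, and the additive identity of the ring; commutativity then makes it a two-sided identity. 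As an alternative to the explicit associativity computation, one could first establish, in parallel with Eq.~\eqref{eq:thomas}, the transfer identity $(x,y)\oplus'_{n}(u,v)=\vec{E}_{\square}\bigl(\vec{L}_{\square}(x,y)\oplus'_{n-1}\vec{L}_{\square}(u,v)\bigr)$, which follows from Propositions~\ref{prop:piscis} and \ref{prop:aries}, and then induct on $n$ from a base case at $n=0$; since the ring axioms hold uniformly in $n$, however, the direct verification above is equally short.
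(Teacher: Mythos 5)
Your proof is correct and follows essentially the same route as the paper: closure of the second coordinate via the absence of zero divisors in the integral domain $(\mathbb{Z}_{n,\square},\oplus_{n,\square},\otimes_{n,\square})$, the identity element $(0_{n,\square},1_{n,\square})$, direct commutativity, and associativity by expanding both sides to the common pair using the ring axioms. The inductive alternative you mention is a fine aside but, as you note, unnecessary.
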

\begin{proof}
Some expressions below are simplified using the standard order of operations where multiplication has precedence over addition. 
\begin{enumerate}
\item \textit{Closure.}  Let $(x,y),(u,v)\in\mathbb{Z}_{n,\square}\times\mathbb{Z}_{n,\square}^{\ast}$, be such that $(x,y)\oplus'_{n}(u,v)=(a,b)$.  From Theorem~\ref{th:taurus}, the tuple $(\mathbb{Z}_{n,\square},\oplus_{n,\square},\otimes_{n,\square})$ is an integral domain.  Since the latter is a ring and it has no nonzero zero divisors, then $a\in\mathbb{Z}_{n,\square}$ and $b\in\mathbb{Z}_{n,\square}^{\ast}$.
\item \textit{Identity element.} It is straightforward to verify that for all $(x,y)\in\mathbb{Z}_{n,\square}\times\mathbb{Z}_{n,\square}^{\ast}$, it holds that
\begin{equation}
(x,y)\oplus'_{n}(0_{n,\square},1_{n,\square})=(0_{n,\square},1_{n,\square})\oplus'_{n}(x,y)=(x,y).
\end{equation}
%% NOTE:  Tempting to think that (0,z) where z\neq 0, can serve as identity but (x,y)\oplus (0,z) = (x z,y z).
\item\textit{Commutativity.}
\begin{subequations}
\begin{align}
(x,y)\oplus'_{n}(u,v)&=((x\otimes_{n,\square}v)\oplus_{n,\square}(y\otimes_{n,\square}u),y\otimes_{n,\square}v)\\
&=((u\otimes_{n,\square}y)\oplus_{n,\square}(v\otimes_{n,\square}x),v\otimes_{n,\square}y)\\
&=(u,v)\oplus'_{n}(x,y).
\end{align}
\end{subequations}
\item \textit{Associativity.}  Let $(x_{i},y_{i})\in\mathbb{Z}_{n,\square}\times\mathbb{Z}_{n,\square}^{\ast}$ for $i=1,2,3$.
\begin{subequations}
\begin{gather}
(x_{1},y_{1})\oplus'_{n}((x_{2},y_{2})\oplus'_{n}(x_{3},y_{3}))\\
=(x_{1},y_{1})\oplus'_{n}(x_{2}\otimes_{n,\square}y_{3}\oplus_{n,\square}y_{2}\otimes_{n,\square}x_{3},y_{2}\otimes_{n,\square}y_{3})\\
=(x_{1}\otimes_{n,\square}y_{2}\otimes_{n,\square}y_{3}\oplus_{n,\square}y_{1}\otimes_{n,\square}(x_{2}\otimes_{n,\square}y_{3}\oplus_{n,\square}x_{3}\otimes_{n,\square}y_{2}),y_{1}\otimes_{n,\square}y_{2}\otimes_{n,\square}y_{3})\\
=((x_{1}\otimes_{n,\square}y_{2}\oplus_{n,\square}y_{1}\otimes_{n,\square}x_{2})\otimes_{n,\square}y_{3}\oplus_{n,\square}y_{1}\otimes_{n,\square}y_{2}\otimes_{n,\square}x_{3},y_{1}\otimes_{n,\square}y_{2}\otimes_{n,\square}y_{3})\\
=(x_{1}\otimes_{n,\square}y_{2}\oplus_{n,\square}y_{1}\otimes_{n,\square}x_{2},y_{1}\otimes_{n,\square}y_{2})\oplus'_{n}(x_{3},y_{3})\\
((x_{1},y_{1})\oplus'_{n}(x_{2},y_{2}))\oplus'_{n}(x_{3},y_{3}).
\end{gather}
\end{subequations}
\end{enumerate}
\end{proof}
\begin{proposition}\label{prop:gomez}
Let $(x,y),(u,v)\in\mathbb{Z}_{n,\square}\times\mathbb{Z}_{n,\square}^{\ast}$.  The cross-relation $\stackrel{n}{\boxtimes}$ defined as
\begin{equation}\label{eq:jorge}
(x,y)\stackrel{n}{\boxtimes}(u,v)\iff x\otimes_{n,\square}v=u\otimes_{n,\square}y,
\end{equation}
is a congruence relation on the tuple $(\mathbb{Z}_{n,\square}\times\mathbb{Z}_{n,\square}^{\ast},\oplus'_{n},\otimes'_{n})$.
\end{proposition}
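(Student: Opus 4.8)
The plan is to check, in the order one would for the classical construction of a field of fractions from an integral domain, that $\stackrel{n}{\boxtimes}$ is an equivalence relation and that it is compatible with each of the two operations $\oplus'_{n}$ and $\otimes'_{n}$. Reflexivity and symmetry are immediate from the commutativity of $\otimes_{n,\square}$ (Proposition~\ref{prop:luigi}). For transitivity, suppose $(x,y)\stackrel{n}{\boxtimes}(u,v)$ and $(u,v)\stackrel{n}{\boxtimes}(s,t)$, that is $x\otimes_{n,\square}v=u\otimes_{n,\square}y$ and $u\otimes_{n,\square}t=s\otimes_{n,\square}v$. Multiplying the first identity by $t$ and the second by $y$ and rearranging with the associativity and commutativity of $\otimes_{n,\square}$ gives $(x\otimes_{n,\square}t)\otimes_{n,\square}v=(s\otimes_{n,\square}y)\otimes_{n,\square}v$; since $v\in\mathbb{Z}_{n,\square}^{\ast}$ and $(\mathbb{Z}_{n,\square},\oplus_{n,\square},\otimes_{n,\square})$ is an integral domain by Theorem~\ref{th:taurus}, one cancels $v$ to obtain $x\otimes_{n,\square}t=s\otimes_{n,\square}y$, i.e.\ $(x,y)\stackrel{n}{\boxtimes}(s,t)$. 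This cancellation is the only point where the absence of nonzero zero divisors enters, and it is the conceptual heart of the argument.

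For compatibility with $\otimes'_{n}$, assume $(x_{1},y_{1})\stackrel{n}{\boxtimes}(x_{2},y_{2})$ and $(u_{1},v_{1})\stackrel{n}{\boxtimes}(u_{2},v_{2})$, so $x_{1}\otimes_{n,\square}y_{2}=x_{2}\otimes_{n,\square}y_{1}$ and $u_{1}\otimes_{n,\square}v_{2}=u_{2}\otimes_{n,\square}v_{1}$. Multiplying these two identities together and regrouping by commutativity and associativity of $\otimes_{n,\square}$ yields $(x_{1}\otimes_{n,\square}u_{1})\otimes_{n,\square}(y_{2}\otimes_{n,\square}v_{2})=(x_{2}\otimes_{n,\square}u_{2})\otimes_{n,\square}(y_{1}\otimes_{n,\square}v_{1})$, which is exactly $\bigl((x_{1},y_{1})\otimes'_{n}(u_{1},v_{1})\bigr)\stackrel{n}{\boxtimes}\bigl((x_{2},y_{2})\otimes'_{n}(u_{2},v_{2})\bigr)$. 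For compatibility with $\oplus'_{n}$, with the same hypotheses I would write out $(x_{i},y_{i})\oplus'_{n}(u_{i},v_{i})=\bigl((x_{i}\otimes_{n,\square}v_{i})\oplus_{n,\square}(y_{i}\otimes_{n,\square}u_{i}),\,y_{i}\otimes_{n,\square}v_{i}\bigr)$ for $i=1,2$, expand the required cross-identity using distributivity of $(\mathbb{Z}_{n,\square},\oplus_{n,\square},\otimes_{n,\square})$, substitute the two hypotheses into the resulting monomials, and recollect; this is the longest computation but uses only the ring axioms guaranteed by Theorem~\ref{th:taurus}.

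I expect the only real obstacle to be the bookkeeping in the $\oplus'_{n}$-compatibility step. An alternative that avoids most of that algebra is to transport the question along the isomorphism $E_{\square}$: by Propositions~\ref{prop:piscis} and~\ref{prop:aries} and Equation~\eqref{eq:thomas}, the whole structure $(\mathbb{Z}_{n,\square}\times\mathbb{Z}_{n,\square}^{\ast},\oplus'_{n},\otimes'_{n})$ together with $\stackrel{n}{\boxtimes}$ is carried isomorphically down to the $n=0$ case, which is the classical construction of $\mathbb{Q}$ from $\mathbb{Z}$, where the assertion is standard; since $\vec{E}_{\square}$ and $\vec{L}_{\square}$ respect $\stackrel{n}{\boxtimes}$ in the manner of Proposition~\ref{prop:kiev}, the congruence property descends along the sequence.
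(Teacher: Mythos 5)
Your proposal is correct and follows essentially the same route as the paper: a direct verification of reflexivity, symmetry, transitivity, and compatibility with $\oplus'_{n}$ and $\otimes'_{n}$, with the integral-domain property of Theorem~\ref{th:taurus} doing the work in the transitivity step and the compatibility checks being the same (the paper states them even more tersely than you sketch them). The only noteworthy difference is local: in transitivity you cancel the nonzero middle denominator $v$ directly, whereas the paper factors out $x_{2}\otimes_{n,\square}y_{2}$ and must then split into the cases $x_{2}=0_{n,\square}$ and $x_{1}\otimes_{n,\square}y_{3}\oplus_{n,\square}T_{n,\square}\,(y_{1}\otimes_{n,\square}x_{3})=0_{n,\square}$; your variant avoids that case analysis and is, if anything, slightly cleaner.
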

\begin{proof}
Reflexivity and symmetry are straightforward.  It remains to prove transitivity and compatibility: 
\begin{enumerate}
\item \textit{Transitivity.} Let $(x_{i},y_{i})\in\mathbb{Z}_{n,\square}\times\mathbb{Z}_{n,\square}^{\ast}$ for $i=1,2,3$, such that
\begin{equation}
(x_{1},y_{1})\stackrel{n}{\boxtimes}(x_{2},y_{2})\quad
\text{and}\quad
(x_{2},y_{2})\stackrel{n}{\boxtimes}(x_{3},y_{3}),
\end{equation}
that is
\begin{equation}\label{eq:torres}
x_{1}\otimes_{n,\square}y_{2}=x_{2}\otimes_{n,\square}y_{1}\quad
\text{and}\quad
x_{2}\otimes_{n,\square}y_{3}=x_{3}\otimes_{n,\square}y_{2}.
\end{equation}
Multiplying both equations one obtains obtains
\begin{equation}
x_{1}\otimes_{n,\square}y_{2}\otimes_{n,\square}x_{2}\otimes_{n,\square}y_{3}=x_{2}\otimes_{n,\square}y_{1}\otimes_{n,\square}x_{3}\otimes_{n,\square}y_{2},
\end{equation}
that is
\begin{equation}
x_{2}\otimes_{n,\square}y_{2}\otimes_{n,\square}(x_{1}\otimes_{n,\square}y_{3}\oplus_{n,\square}T_{n,\square}\:(y_{1}\otimes_{n,\square}x_{3}))=0_{n,\square}.
\end{equation}
Since $(\mathbb{Z}_{n,\square},\oplus_{n,\square},\otimes_{n,\square})$ is an integral domain, either $x_{2}=0_{n,\square}$ or $x_{1}\otimes_{n,\square}y_{3}\oplus_{n,\square}T_{n,\square}\:(y_{1}\otimes_{n,\square}x_{3})=\mathord{\stackrel{n}{0}}$.  If $x_{2}=0_{n,\square}$, the Eq.~\eqref{eq:torres} implies $x_{1}=x_{3}=0_{n,\square}$, therefore $(x_{1},y_{1})\stackrel{n}{\boxtimes}(x_{3},y_{3})$.  If $x_{1}\otimes_{n,\square}y_{3}\oplus_{n,\square}T_{n,\square}\:(y_{1}\otimes_{n,\square}x_{3})=0_{n,\square}$, then $x_{1}\otimes_{n,\square}y_{3}=x_{3}\otimes_{n,\square}y_{1}$, that is $(x_{1},y_{1})\stackrel{n}{\boxtimes}(x_{3},y_{3})$.

\item \textit{Compatibility.}  Let $(x_{i},y_{i}),(u_{i},v_{i})\in\mathbb{Z}_{n,\square}\times\mathbb{Z}_{n,\square}^{\ast}$ for $i=1,2$, such that
\begin{equation}\label{eq:cadillac}
(x_{1},y_{1})\stackrel{n}{\boxtimes}(x_{2},y_{2})\quad
\text{and}\quad
(u_{1},v_{1})\stackrel{n}{\boxtimes}(u_{2},v_{2}).
\end{equation}
For $i=1,2$, denote
\begin{subequations}
\begin{align}
(a_{i},b_{i})&=(x_{i},y_{i})\oplus'_{n}(u_{i},v_{i})=(x_{i}\otimes_{n,\square}v_{i}\oplus_{n,\square}y_{i}\otimes_{n,\square}u_{i},y_{i}\otimes_{n,\square}v_{i}),\\
(a'_{i},b'_{i})&=(x_{i},y_{i})\otimes'_{n}(u_{i},v_{i})=(x_{i}\otimes_{n,\square}u_{i},y_{i}\otimes_{n,\square}v_{i}).
\end{align}
\end{subequations}
From Eq.~\eqref{eq:cadillac} it follows that
\begin{equation}
a_{1}\otimes_{n,\square}b_{2}=a_{2}\otimes_{n,\square}b_{1}\quad
\text{and}\quad
a'_{1}\otimes_{n,\square}b'_{2}=a'_{2}\otimes_{n,\square}b'_{1},
\end{equation}
%% BEGIN PROOF
%% Proof of Eq.~\eqref{eq:goodyear}:
%% From Eq.~\eqref{eq:cadillac} one obtains 
%% (denote a\otimes_{n,\square}b=ab and a\plus_{n,\square}b=a+b)
%%\begin{equation}
%%x_{1}y_{2}=x_{2}y_{1}\hspace{1cm}\text{and}\hspace{1cm}u_{1}v_{2}=y_{2}v_{1}.
%%\end{equation}
%%Using the fact that $(\mathbb{Z}_{n,\square},\oplus_{n,\square},\otimes_{n,\square})$ is an integral domain, then (denote: $a\oplus_{n,\square} b=a+b$ and $a\otimes_{n,\square}b=ab$)
%%\begin{subequations}
%%\begin{align}
%%a_{1}\otimes_{n,\square}b_{2}&=(x_{1}v_{1}+y_{1}u_{1})y_{2}v_{2}\\
%%&=\underline{x_{1}y_{2}}v_{1}v_{2}+y_{1}y_{2}\underline{u_{1}v_{2}}\\
%%&=\underline{x_{2}y_{1}}v_{1}v_{2}+y_{1}y_{2}\underline{u_{2}v_{1}}\\
%%&=(x_{2}v_{2}+y_{2}u_{2})y_{1}v_{1}\\
%%&=a_{2}\otimes_{n,\square}b_{1}.
%%\end{align}
%%\end{subequations}
%% Proof of Eq.~\eqref{eq:pirelli}:
%% a'_{1}b'_{2}=(x_{1}u_{1})(y_{2}v_{2})=(x_{1}y_{2})(u_{1}v_{2})
%% =(x_{2}y_{1})(u_{2}v_{2})=(x_{2}u_{2})(y_{1}v_{1})=a'_{2}b'_{1}.
%% END PROOF
implying
\begin{equation}
(a_{1},b_{1})\stackrel{n}{\boxtimes}(a_{2},b_{2})\quad
\text{and}\quad
(a'_{1},b'_{1})\stackrel{n}{\boxtimes}(a'_{2},b'_{2}).
\end{equation}
\end{enumerate}
\end{proof}
\begin{notation}
Consider the quotient structure
\begin{equation}
(\mathbb{Q}_{n,\boxtimes},\oplus_{n,\boxtimes},\otimes_{n,\boxtimes})=\left(\frac{\mathbb{Z}_{n,\square}\times\mathbb{Z}_{n,\square}^{\ast}}{\stackrel{n}{\boxtimes}},\oplus_{n,\boxtimes},\otimes_{n,\boxtimes}\right),
\end{equation}
where $\oplus_{n,\boxtimes}$ and $\otimes_{n,\boxtimes}$ are the operations induced on $(\mathbb{Z}_{n,\square}\times\mathbb{Z}_{n,\square}^{\ast})/\!\stackrel{n}{\boxtimes}$ by $\oplus'_{n}$ and $\otimes'_{n}$.  Denote by $[\![(x,y)]\!]_{n,\boxtimes}$ the equivalence class of $(x,y)$ under $\stackrel{n}{\boxtimes}$.
\end{notation}
\begin{remark}
If $(x,y)\stackrel{n}{\boxtimes}(u,v)$ then $(T_{n,\square}\:x,y)\stackrel{n}{\boxtimes}(T_{n,\square}\:u,v)$ and $(y,x)\stackrel{n}{\boxtimes}(v,u)$.
%% Proof: Denote T_{n,\square} = T and 1_{n,\square} = 1.  Noe that
%% (T1,1)\otimes_{n,\square}(x,y)=(Tx,y).  Idem for (u,v).
%% By reflexivity $(T1,1)\stackrel{n}{\boxtimes}(T1,1)$. If $(x,y)\stackrel{n}{\boxtimes}(u,v)$, then by compatibility 
%% (T1,1)\otimes_{n,\square}(x,y)\stackrel{n}{\boxtimes}(T1,1)\otimes_{n,\square}(u,v), that is (Tx,y)\stackrel{n}{\boxtimes}(Tu,v).
\end{remark}
\begin{notation}
Denote $k_{n,\boxtimes}=[\![(k_{n,\square},1_{n,\square})]\!]_{n,\boxtimes}$ and $\mathbb{Q}_{n,\boxtimes}^{\ast}=\mathbb{Q}_{n,\boxtimes}\setminus\{0_{n,\boxtimes}\}$.
\end{notation}
\begin{definition}\label{def:kaliman}
Define the involution
\begin{gather}
T_{0,\boxtimes}\: :\mathbb{Q}_{0,\boxtimes}\rightarrow\mathbb{Q}_{0,\boxtimes}:\nonumber\\
T_{0,\boxtimes}\:[\![(x,y)]\!]_{0,\boxtimes}=[\![(T_{0,\square}\:x,y)]\!]_{0,\boxtimes}=[\![(x,T_{0,\square}\:y)]\!]_{0,\boxtimes}.
\end{gather}
And for $n>0$, define the involution
\begin{subequations}
\begin{gather}
T_{n,\boxtimes}\: :\mathbb{Q}_{n-1,\boxtimes}^{\ast}\cup\mathbb{Q}_{n,\boxtimes}\rightarrow\mathbb{Q}_{n-1,\boxtimes}^{\ast}\cup\mathbb{Q}_{n,\boxtimes}:\nonumber\\
T_{n,\boxtimes}\:[\![(x,y)]\!]_{n-1,\boxtimes}=[\![(y,x)]\!]_{n-1,\boxtimes},\\
T_{n,\boxtimes}\:[\![(x,y)]\!]_{n,\boxtimes}=[\![(T_{n,\square}\:x,y)]\!]_{n,\boxtimes}=[\![(x,T_{n,\square}\:y)]\!]_{n,\boxtimes}.
\end{gather}
\end{subequations}
\end{definition}
\begin{theorem}
The tuple $(\mathbb{Q}_{n,\boxtimes},\oplus_{n,\boxtimes},\otimes_{n,\boxtimes})$ is a commutative field.
\end{theorem}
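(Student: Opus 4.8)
The plan is to run the classical field-of-quotients argument, leaning on the structure already assembled. By Proposition~\ref{prop:gomez} the cross-relation $\stackrel{n}{\boxtimes}$ is a congruence on the two-operation structure $(\mathbb{Z}_{n,\square}\times\mathbb{Z}_{n,\square}^{\ast},\oplus'_{n},\otimes'_{n})$, and by Propositions~\ref{prop:boca} and \ref{prop:river} both $(\mathbb{Z}_{n,\square}\times\mathbb{Z}_{n,\square}^{\ast},\oplus'_{n})$ and $(\mathbb{Z}_{n,\square}\times\mathbb{Z}_{n,\square}^{\ast},\otimes'_{n})$ are commutative monoids. Hence $(\mathbb{Q}_{n,\boxtimes},\oplus_{n,\boxtimes})$ and $(\mathbb{Q}_{n,\boxtimes},\otimes_{n,\boxtimes})$ are commutative monoids with identities $0_{n,\boxtimes}=[\![(0_{n,\square},1_{n,\square})]\!]_{n,\boxtimes}$ and $1_{n,\boxtimes}=[\![(1_{n,\square},1_{n,\square})]\!]_{n,\boxtimes}$. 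The field axioms left to check are therefore: existence of additive inverses, existence of multiplicative inverses for every class other than $0_{n,\boxtimes}$, distributivity, and nontriviality $0_{n,\boxtimes}\neq 1_{n,\boxtimes}$.

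First I would show $(\mathbb{Q}_{n,\boxtimes},\oplus_{n,\boxtimes})$ is an abelian group. The candidate additive inverse of $[\![(x,y)]\!]_{n,\boxtimes}$ is $T_{n,\boxtimes}\:[\![(x,y)]\!]_{n,\boxtimes}=[\![(T_{n,\square}\:x,y)]\!]_{n,\boxtimes}$ from Definition~\ref{def:kaliman}, which is well defined on classes by the remark preceding that definition. Computing $(x,y)\oplus'_{n}(T_{n,\square}\:x,y)=((x\otimes_{n,\square}y)\oplus_{n,\square}(y\otimes_{n,\square}(T_{n,\square}\:x)),\,y\otimes_{n,\square}y)$ and invoking the sign rules~\eqref{eq:ties-argh}, the first coordinate collapses to $0_{n,\square}$, so this pair is $\stackrel{n}{\boxtimes}$-equivalent to $(0_{n,\square},1_{n,\square})$; thus $\oplus_{n,\boxtimes}$ admits inverses.

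Next I would treat multiplication. The key observation is that $[\![(x,y)]\!]_{n,\boxtimes}=0_{n,\boxtimes}$ iff $x=0_{n,\square}$, since $(x,y)\stackrel{n}{\boxtimes}(0_{n,\square},1_{n,\square})$ unwinds to $x=0_{n,\square}$. Hence any nonzero class has a representative $(x,y)$ with $x,y\in\mathbb{Z}_{n,\square}^{\ast}$, so $(y,x)\in\mathbb{Z}_{n,\square}\times\mathbb{Z}_{n,\square}^{\ast}$ and $(x,y)\otimes'_{n}(y,x)=(x\otimes_{n,\square}y,\,y\otimes_{n,\square}x)\stackrel{n}{\boxtimes}(1_{n,\square},1_{n,\square})$, giving a multiplicative inverse $[\![(y,x)]\!]_{n,\boxtimes}$; closure of $\otimes_{n,\boxtimes}$ on $\mathbb{Q}_{n,\boxtimes}^{\ast}$ is the absence of nonzero zero divisors in the integral domain of Theorem~\ref{th:taurus}. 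Distributivity is then checked at the level of representatives: expanding $(x_{1},y_{1})\otimes'_{n}((x_{2},y_{2})\oplus'_{n}(x_{3},y_{3}))$ and $((x_{1},y_{1})\otimes'_{n}(x_{2},y_{2}))\oplus'_{n}((x_{1},y_{1})\otimes'_{n}(x_{3},y_{3}))$ using only the ring axioms of $\mathbb{Z}_{n,\square}$, the second pair turns out to be the first pair with both coordinates multiplied by $y_{1}$, hence $\stackrel{n}{\boxtimes}$-equivalent to it. Finally $0_{n,\boxtimes}\neq 1_{n,\boxtimes}$ because $0_{n,\square}\neq 1_{n,\square}$; combining these items yields the commutative field structure.

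The main obstacle is the bookkeeping in the distributivity step: one must recognize that the extra common factor $y_{1}$ produced after expanding $\oplus'_{n}$ (which cross-multiplies rather than reducing fractions) is exactly the redundancy quotiented out by $\stackrel{n}{\boxtimes}$, and that cancelling it is legitimate precisely because $y_{1}\neq 0_{n,\square}$ in a domain; organizing the computation so the sign rules~\eqref{eq:ties-argh} are applied uniformly avoids a tedious sign case-split. A conceptually cleaner alternative I would mention is to transport the whole structure along isomorphisms: define $E_{\boxtimes}([\![(x,y)]\!]_{n,\boxtimes})=[\![\vec{E}_{\square}(x,y)]\!]_{n+1,\boxtimes}$ and verify, as in Propositions~\ref{prop:piscis} and \ref{prop:aries}, that it intertwines $\oplus_{n,\boxtimes},\otimes_{n,\boxtimes}$ with $\oplus_{n+1,\boxtimes},\otimes_{n+1,\boxtimes}$, so that $(\mathbb{Q}_{n,\boxtimes},\oplus_{n,\boxtimes},\otimes_{n,\boxtimes})$ is isomorphic to $(\mathbb{Q}_{0,\boxtimes},\oplus_{0,\boxtimes},\otimes_{0,\boxtimes})$, the classical field of rationals.
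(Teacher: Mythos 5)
Your proposal is correct and takes essentially the same route as the paper: commutative monoid structure on the quotient from the earlier propositions, additive inverses via $T_{n,\square}$ acting on the first coordinate, multiplicative inverses by swapping coordinates, and distributivity checked on representatives, where the extra common factor $y_{1}$ is absorbed by the cross-relation $\stackrel{n}{\boxtimes}$. Your additional checks (characterizing the zero class, nontriviality) and the closing remark about transporting the structure along $E_{\boxtimes}$ to the case $n=0$ are refinements beyond what the paper records, but the core argument coincides.
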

\begin{proof}
From Propositions~\ref{prop:river},~\ref{prop:boca} and~\ref{prop:gomez}, the tuples $(\mathbb{Q}_{n,\boxtimes},\oplus_{n,\boxtimes})$ and $(\mathbb{Q}_{n,\boxtimes},\otimes_{n,\boxtimes})$ are commutative monoids induced by $(\mathbb{Z}_{n,\square}\times\mathbb{Z}_{n,\square}^{\ast},\oplus'_{n})$ and $(\mathbb{Z}_{n,\square}\times\mathbb{Z}_{n,\square}^{\ast},\otimes'_{n})$.  The following properties rely on the equivalence class $[\![.]\!]_{n,\boxtimes}$ and complete the proof that the tuple $(\mathbb{Q}_{n,\boxtimes},\oplus_{n,\boxtimes},\otimes_{n,\boxtimes})$ is a commutative field:
\begin{enumerate}
\item \textit{Additive inverse.} Let $(x,y)\in\mathbb{Z}_{n,\square}\times\mathbb{Z}_{n,\square}^{\ast}$, then
\begin{subequations}
\begin{align}
[\![(x,y)]\!]_{n,\boxtimes}
\oplus_{n,\boxtimes}T_{n,\square}\:[\![(x,y)]\!]_{n,\boxtimes}
&=[\![(x,y)]\!]_{n,\boxtimes}
\oplus_{n,\boxtimes}[\![(T_{n,\square}\:x,y)]\!]_{n,\boxtimes}\\
&=[\![(0_{n,\square},y\otimes_{n,\square}y)]\!]_{n,\boxtimes}\\
&=[\![(0_{n,\square},1_{n,\square})]\!]_{n,\boxtimes}.
\end{align}
\end{subequations}
\item \textit{Mutiplicative inverse.}
\begin{subequations}
\begin{align}
[\![(x,y)]\!]_{n,\boxtimes}
\otimes_{n,\boxtimes}T_{n+1,\square}\:[\![(x,y)]\!]_{n,\boxtimes}
&=[\![(x,y)]\!]_{n,\boxtimes}
\otimes_{n,\boxtimes}[\![(y,x)]\!]_{n,\boxtimes}\\
&=[\![(x\otimes_{n,\square}y,y\otimes_{n,\square}x)]\!]_{n,\boxtimes}\\
&=[\![(1_{n,\square},1_{n,\square})]\!]_{n,\boxtimes}.
\end{align}
\end{subequations}
\item \textit{Distributivity.}  Let $(x_{i},y_{i})\in\mathbb{Z}_{n,\square}\times\mathbb{Z}_{n,\square}^{\ast}$ for $i=1,2,3$, then
\begin{subequations}
\begin{gather}
[\![(x_{1},y_{1})]\!]_{n,\boxtimes}\otimes_{n,\boxtimes}([\![(x_{2},y_{2})]\!]_{n,\boxtimes}\oplus_{n,\boxtimes}[\![(x_{3},y_{3})]\!]_{n,\boxtimes})\\
[\![(x_{1},y_{1})]\!]_{n,\boxtimes}\otimes_{n,\boxtimes}[\![(x_{2}\otimes_{n,\square}y_{3}\oplus_{n,\square}y_{2}\otimes_{n,\square}x_{3},y_{2}\otimes_{n,\square}y_{3})]\!]_{n,\boxtimes}\\
[\![(x_{1}\otimes_{n,\square}(x_{2}\otimes_{n,\square}y_{3}\oplus_{n,\square}y_{2}\otimes_{n,\square}x_{3}),y_{1}\otimes_{n,\square}y_{2}\otimes_{n,\square}y_{3})]\!]_{n,\boxtimes}\\
[\![(x_{1}\otimes_{n,\square}x_{2}\otimes_{n,\square}y_{3}\oplus_{n,\square}x_{1}\otimes_{n,\square}y_{2}\otimes_{n,\square}x_{3},y_{1}\otimes_{n,\square}y_{2}\otimes_{n,\square}y_{3})]\!]_{n,\boxtimes}\\
[\![(x_{1}\otimes_{n,\square}x_{2}\otimes_{n,\square}y_{1}\otimes_{n,\square}y_{3}\oplus_{n,\square}y_{1}\otimes_{n,\square}y_{2}\otimes_{n,\square}x_{1}\otimes_{n,\square}x_{3},y_{1}\otimes_{n,\square}y_{2}\otimes_{n,\square}y_{1}\otimes_{n,\square}y_{3})]\!]_{n,\boxtimes}\\
[\![(x_{1}\otimes_{n,\square}x_{2},y_{1}\otimes_{x,\square}y_{2})]\!]_{n,\boxtimes}\oplus_{n,\boxtimes}[\![(x_{1}\otimes_{n,\square}x_{3},y_{1}\otimes_{n,\square}y_{3})]\!]_{n,\boxtimes}\\
[\![(x_{1},y_{1})]\!]_{n,\boxtimes}\otimes_{n,\boxtimes}[\![(x_{2},y_{2})]\!]_{n,\boxtimes}\oplus_{n,\boxtimes}[\![(x_{1},y_{1})]\!]_{n,\boxtimes}\otimes_{n,\boxtimes}[\![(x_{3},y_{3})]\!]_{n,\boxtimes}.
\end{gather}
\end{subequations}
\end{enumerate}
\end{proof}
\begin{remark}
Denote $\mathbb{Z}^{+}_{n,\square}=\mathbb{N}_{n,\square}\setminus \{0_{n,\square}\}$.  Any element $[\![(x,y)]\!]_{n,\boxtimes}$ in $\mathbb{Q}_{n,\boxtimes}$ can be rewritten as $[\![(x',y')]\!]_{n,\boxtimes}$, where $y'\in\mathbb{Z}^{+}_{n,\square}$.
\end{remark}
\begin{definition}\label{def:neverland}
The ordering $\stackrel{n}{<}$ in $\mathbb{Q}_{n,\boxtimes}$ is defined as
\begin{equation}
[\![(x,y)]\!]_{n,\boxtimes}\stackrel{n}{<}[\![(u,v)]\!]_{n,\boxtimes}\iff (x'\otimes_{n,\square}v')\stackrel{n}{<}(y'\otimes_{n,\square}u'),
%% Introducing ordering...
%\url{https://en.wikipedia.org/wiki/Integer#Construction}
\end{equation}
where $[\![(x,y)]\!]_{n,\boxtimes}=[\![(x',y')]\!]_{n,\boxtimes}$ and $[\![(u,v)]\!]_{n,\boxtimes}=[\![(u',v')]\!]_{n,\boxtimes}$, with $y',v'\in\mathbb{Z}^{+}_{n,\square}$.
\end{definition}
\begin{proposition}\label{prop:odesa}
Let $(x,y),(u,v)\in\mathbb{Z}_{n,\square}\times\mathbb{Z}_{n,\square}^{\ast}$, then
\begin{equation}
(x,y)\stackrel{n}{\boxtimes}(u,v)\iff \vec{E}_{\square}(x,y)\stackrel{n+1}{\boxtimes}\vec{E}_{\square}(u,v).
\end{equation}
\end{proposition}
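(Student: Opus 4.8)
The plan is to imitate the proof of Proposition~\ref{prop:kiev}, with the additive structure replaced by the multiplicative one. The single ingredient needed is that $E_{\square}$ intertwines $\otimes_{n,\square}$ with $\otimes_{n+1,\square}$; this is just Proposition~\ref{prop:aries} read from right to left. Indeed, shifting the index in that proposition to $n+1$ and evaluating at $a=E_{\square}(x)$, $b=E_{\square}(y)$ with $x,y\in\mathbb{Z}_{n,\square}$ (so that $L_{\square}(a)=x$ and $L_{\square}(b)=y$) yields the identity $E_{\square}(x)\otimes_{n+1,\square}E_{\square}(y)=E_{\square}(x\otimes_{n,\square}y)$.

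For the forward implication I would assume $(x,y)\stackrel{n}{\boxtimes}(u,v)$, which by Eq.~\eqref{eq:jorge} means $x\otimes_{n,\square}v=u\otimes_{n,\square}y$. Applying $E_{\square}$ to both sides and invoking the intertwining identity twice gives $E_{\square}(x)\otimes_{n+1,\square}E_{\square}(v)=E_{\square}(u)\otimes_{n+1,\square}E_{\square}(y)$. By the remark following Definition~\ref{def:batman}, $\vec{E}_{\square}$ carries $\mathbb{Z}_{n,\square}\times\mathbb{Z}_{n,\square}^{\ast}$ into $\mathbb{Z}_{n+1,\square}\times\mathbb{Z}_{n+1,\square}^{\ast}$, so this displayed equation is precisely the statement $\vec{E}_{\square}(x,y)\stackrel{n+1}{\boxtimes}\vec{E}_{\square}(u,v)$ according to the definition of $\stackrel{n+1}{\boxtimes}$.

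For the converse I would use that $E_{\square}$ is injective, since it has the inverse $L_{\square}$ from Definition~\ref{def:brexit}. Starting from $E_{\square}(x)\otimes_{n+1,\square}E_{\square}(v)=E_{\square}(u)\otimes_{n+1,\square}E_{\square}(y)$, rewriting each side via the intertwining identity as $E_{\square}(x\otimes_{n,\square}v)$ and $E_{\square}(u\otimes_{n,\square}y)$, and cancelling the injective $E_{\square}$, one recovers $x\otimes_{n,\square}v=u\otimes_{n,\square}y$, i.e. $(x,y)\stackrel{n}{\boxtimes}(u,v)$.

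I do not anticipate any real obstacle here; the proof is a one-line computation in each direction. The only points requiring a moment of care are confirming that the image pair $\vec{E}_{\square}(x,y)$ genuinely lies in $\mathbb{Z}_{n+1,\square}\times\mathbb{Z}_{n+1,\square}^{\ast}$ (so that $\stackrel{n+1}{\boxtimes}$ is applicable), which follows from the remark after Definition~\ref{def:batman}, and that $E_{\square}$ both preserves and reflects the defining equation of the cross-relation, which is exactly what Proposition~\ref{prop:aries} together with the injectivity of $E_{\square}$ provides.
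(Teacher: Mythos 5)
Your proposal is correct and follows essentially the same route as the paper: exponentiate the defining equation $x\otimes_{n,\square}v=u\otimes_{n,\square}y$, use the fact that the exponential intertwines the level-$n$ and level-$(n+1)$ multiplications, and invoke injectivity for the converse. If anything, your version is the more careful rendering of the argument, since you phrase the intertwining step at the quotient level via Proposition~\ref{prop:aries} (with $E_{\square}$ and $\otimes_{n,\square}$), whereas the paper states the same step loosely in terms of $E$ and $F_{n+1}$, $F_{n+2}$, which strictly live on $\mathord{\stackrel{n}{\mathbb{N}}}$ rather than on $\mathbb{Z}_{n,\square}$.
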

\begin{proof} 
The proof that $(x,y)\stackrel{n}{\boxtimes}(u,v)\Rightarrow\vec{E}_{\square}(x,y)\stackrel{n+1}{\boxtimes}\vec{E}_{\square}(u,v)$ is the following: If $(x,y)\stackrel{n}{\boxtimes}(u,v)$ then $F_{n+1}(x,v)=F_{n+1}(y,u)$.  Applying the exponential $E$ at both sides, one obtains $F_{n+2}(E(x),E(v))=F_{n+2}(E(y),E(u))$.  From the definition of $\stackrel{n+1}{\boxtimes}$, this implies $(E(x),E(y))\stackrel{n+1}{\boxtimes}(E(u),E(v))$, that is, $\vec{E}_{\square}(x,y)\stackrel{n+1}{\boxtimes}\vec{E}_{\square}(u,v)$.  Noting that $E$ is an injective function, the converse statement is proven in a similar way. 
\end{proof}
\begin{definition}\label{def:trump}
Define the function $E_{\boxtimes}$ in $U_{\mathbb{Q}}=\bigcup_{n\in\mathbb{N}}\mathbb{Q}_{n,\boxtimes}$ as
\begin{equation}
E_{\boxtimes}([\![a]\!]_{n,\boxtimes})=[\![\vec{E}_{\square}(a)]\!]_{n+1,\boxtimes}.
\end{equation}
Denote by $L_{\boxtimes}$ its inverse function.
\end{definition}
\begin{remark}
Note that $\mathbb{Q}_{n,\boxtimes}=E_{\boxtimes}(\mathbb{Q}_{n-1,\boxtimes})$.  
\end{remark}
\begin{remark}
The isomorphism $E_{\boxtimes}$ is nonexponential as defined in the last paragraph of section~\ref{sec:semirings}, since $\mathbb{Q}_{n,\boxtimes}\cap\mathbb{Q}_{n+1,\boxtimes}=\emptyset$.
\end{remark}
\begin{proposition}
Let $a,b\in\mathbb{Q}_{n,\boxtimes}$, then
\begin{subequations}
\begin{align}
a\otimes_{n,\boxtimes}b=E_{\boxtimes}(L_{\boxtimes}(a)\otimes_{n-1,\boxtimes}L_{\boxtimes}(b)),\\
a\oplus_{n,\boxtimes}b=E_{\boxtimes}(L_{\boxtimes}(a)\oplus_{n-1,\boxtimes}L_{\boxtimes}(b)).
\end{align}
\end{subequations}
\end{proposition}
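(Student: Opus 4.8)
The plan is to lift the intertwining relations one more level, mirroring how Propositions~\ref{prop:piscis} and~\ref{prop:aries} lift the corresponding statements from $\mathord{\stackrel{n}{\mathbb{N}}}$ to $\mathbb{Z}_{n,\square}$. Fix representatives $a=[\![(x,y)]\!]_{n,\boxtimes}$ and $b=[\![(u,v)]\!]_{n,\boxtimes}$ with $x,u\in\mathbb{Z}_{n,\square}$ and $y,v\in\mathbb{Z}_{n,\square}^{\ast}$. Since $\mathbb{Z}_{n,\square}=E_{\square}(\mathbb{Z}_{n-1,\square})$ with $E_{\square}$ injective and $E_{\square}(0_{n-1,\square})=0_{n,\square}$, the preimages $L_{\square}(x),L_{\square}(u)\in\mathbb{Z}_{n-1,\square}$ and $L_{\square}(y),L_{\square}(v)\in\mathbb{Z}_{n-1,\square}^{\ast}$ are well defined, so by Definitions~\ref{def:batman} and~\ref{def:trump} one has $L_{\boxtimes}(a)=[\![(L_{\square}(x),L_{\square}(y))]\!]_{n-1,\boxtimes}$ and $L_{\boxtimes}(b)=[\![(L_{\square}(u),L_{\square}(v))]\!]_{n-1,\boxtimes}$.

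For the multiplicative identity, I would expand $a\otimes_{n,\boxtimes}b=[\![(x\otimes_{n,\square}u,\,y\otimes_{n,\square}v)]\!]_{n,\boxtimes}$ using Definition~\ref{def:akira}, then apply Proposition~\ref{prop:aries} in each coordinate to write $x\otimes_{n,\square}u=E_{\square}(L_{\square}(x)\otimes_{n-1,\square}L_{\square}(u))$ and $y\otimes_{n,\square}v=E_{\square}(L_{\square}(y)\otimes_{n-1,\square}L_{\square}(v))$. The resulting pair is precisely $\vec{E}_{\square}$ applied to $(L_{\square}(x),L_{\square}(y))\otimes'_{n-1}(L_{\square}(u),L_{\square}(v))$ (equivalently, this is Eq.~\eqref{eq:thomas} read backwards). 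Taking the $\stackrel{n}{\boxtimes}$-class and invoking Definition~\ref{def:trump} rewrites this as $E_{\boxtimes}\bigl(L_{\boxtimes}(a)\otimes_{n-1,\boxtimes}L_{\boxtimes}(b)\bigr)$, which is the claim.

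The additive identity is the same argument with one more layer of stripping. From the definition of $\oplus'_{n}$ one has $a\oplus_{n,\boxtimes}b=[\![\bigl((x\otimes_{n,\square}v)\oplus_{n,\square}(y\otimes_{n,\square}u),\,y\otimes_{n,\square}v\bigr)]\!]_{n,\boxtimes}$. I would first use Proposition~\ref{prop:aries} to pull $E_{\square}$ out of each of the three products $x\otimes_{n,\square}v$, $y\otimes_{n,\square}u$, $y\otimes_{n,\square}v$, and then use Proposition~\ref{prop:piscis} to pull $E_{\square}$ out of the remaining sum of the form $E_{\square}(\cdot)\oplus_{n,\square}E_{\square}(\cdot)$. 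The pair then equals $\vec{E}_{\square}$ of $\bigl((L_{\square}(x)\otimes_{n-1,\square}L_{\square}(v))\oplus_{n-1,\square}(L_{\square}(y)\otimes_{n-1,\square}L_{\square}(u)),\,L_{\square}(y)\otimes_{n-1,\square}L_{\square}(v)\bigr)$, which is exactly $(L_{\square}(x),L_{\square}(y))\oplus'_{n-1}(L_{\square}(u),L_{\square}(v))$; passing to the class and using Definition~\ref{def:trump} again gives $E_{\boxtimes}\bigl(L_{\boxtimes}(a)\oplus_{n-1,\boxtimes}L_{\boxtimes}(b)\bigr)$.

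The only thing to check is that this chain of rewrites is independent of the representatives chosen for $a$ and $b$. This is automatic: $\stackrel{n}{\boxtimes}$ is a congruence (Proposition~\ref{prop:gomez}), $\vec{E}_{\square}$ respects $\stackrel{n}{\boxtimes}$ (Proposition~\ref{prop:odesa}), and $E_{\square}$, $\oplus_{n,\square}$, $\otimes_{n,\square}$ are already known to be well defined on $\mathbb{Z}_{n,\square}$, so every equivalence class appearing in the computation is well posed. I do not expect any real obstacle; the proposition is a bookkeeping consequence of the level-$(n-1)$ relations of Propositions~\ref{prop:piscis} and~\ref{prop:aries} together with the coordinatewise definitions of $\oplus'_{n}$ and $\otimes'_{n}$.
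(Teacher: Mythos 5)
Your proposal is correct and follows essentially the same route as the paper: expand $\oplus_{n,\boxtimes}$ and $\otimes_{n,\boxtimes}$ on representatives, apply Propositions~\ref{prop:piscis} and~\ref{prop:aries} coordinatewise to pull out $E_{\square}$, and reassemble via $\vec{E}_{\square}$ and Definition~\ref{def:trump}. Your added remark on independence of representatives (via Propositions~\ref{prop:gomez} and~\ref{prop:odesa}) is a small bonus the paper leaves implicit, but it does not change the argument.
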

\begin{proof}
Let $a=[\![(x,y)]\!]_{n,\boxtimes}$ and $b=[\![(u,v)]\!]_{n,\boxtimes}$. From Propositions \ref{prop:piscis} and \ref{prop:aries}, one obtains
\begin{subequations}
\begin{gather}
[\![(x,y)]\!]_{n,\boxtimes}\oplus_{n,\boxtimes}[\![(u,v)]\!]_{n,\boxtimes}\\
=[\![((x\otimes_{n,\square}v)\oplus_{n,\square}(y\otimes_{n,\square}u),y\otimes_{n,\square}v)]\!]_{n,\boxtimes}\\
=[\![(E_{\square}(L_{\square}(x)\otimes_{n-1,\square}L_{\square}(v))\oplus_{n-1,\square}(L_{\square}(y)\otimes_{n-1,\square}L_{\square}(u)),E_{\square}(L_{\square}(y)\otimes_{n-1,\square}L_{\square}(v)))]\!]_{n,\boxtimes}\\
%=E_{\boxtimes}([\![((L_{\square}(x)\otimes_{n-1,\square}L_{\square}(v))\oplus_{n-1,\square}(L_{\square}(y)\otimes_{n-1,\square}L_{\square}(u)),L_{\square}(y)\otimes_{n-1,\square}L_{\square}(v))]\!]_{n-1,\boxtimes})\\
=E_{\boxtimes}([\![(L_{\square}(x),L_{\square}(y))]\!]_{n-1,\square}\oplus_{n-1,\boxtimes}[\![(L_{\square}(u),L_{\square}(v))]\!]_{n-1,\square})\\
=E_{\boxtimes}(L_{\boxtimes}([\![(x,y)]\!]_{n,\square})\oplus_{n-1,\boxtimes}L_{\boxtimes}([\![(u,v)]\!]_{n,\square}))
\end{gather}
\end{subequations}
and
\begin{subequations}
\begin{gather}
[\![(x,y)]\!]_{n,\boxtimes}\otimes_{n,\boxtimes}[\![(u,v)]\!]_{n,\boxtimes}\\
=[\![(x\otimes_{n,\square}u,y\otimes_{n,\square}v)]\!]_{n,\boxtimes}\\
=[\![(E_{\square}(L_{\square}(x)\otimes_{n-1,\square}L_{\square}(u)),E_{\square}(L_{\square}(y)\otimes_{n-1,\square}L_{\square}(v)))]\!]_{n-1,\boxtimes}\\
%=E_{\boxtimes}([\![(L_{\square}(x)\otimes_{n-1,\square}L_{\square}(u),L_{\square}(y)\otimes_{n-1,\square}L_{\square}(v))]\!]_{n-1,\boxtimes})\\
=E_{\boxtimes}([\![(L_{\square}(x),L_{\square}(y))]\!]_{n-1,\boxtimes}\otimes_{n-1,\boxtimes}[\![(L_{\square}(u),L_{\square}(v))]\!]_{n-1,\boxtimes})\\
=E_{\boxtimes}(L_{\boxtimes}([\![(x,y)]\!]_{n,\boxtimes})\otimes_{n-1,\boxtimes}L_{\boxtimes}([\![(u,v)]\!]_{n,\boxtimes})).
\end{gather}
\end{subequations}
\end{proof}
\begin{notation}
 Denote $\mathbb{N}_{n,\boxtimes}=[\![(\mathbb{N}_{n,\square},1_{n,\square})]\!]_{n,\boxtimes}$, $\mathbb{Z}_{n,\boxtimes}=[\![(\mathbb{Z}_{n,\square},1_{n,\square})]\!]_{n,\boxtimes}$ and $\mathbb{Z}_{n,\boxtimes}^{\ast}=[\![(\mathbb{Z}_{n,\square}^{\ast},1_{n,\square})]\!]_{n,\boxtimes}$.
\end{notation}
\begin{remark}
Every element in the commutative field $(\mathbb{Q}_{n,\boxtimes},\oplus_{n,\boxtimes},\otimes_{n,\boxtimes})$ can be written as $a\otimes_{n,\boxtimes}(T_{n+1,\boxtimes}\:b)$, where $a\in\mathbb{Z}_{n,\boxtimes}$ and $b\in\mathbb{Z}_{n,\boxtimes}^{\ast}$.
\end{remark}

   \subsection{Construction of real numbers from fields of quotients}
   %% Wiki: Construction of the real numbers
%%\url{https://en.wikipedia.org/wiki/Construction_of_the_real_numbers#Construction_from_surreal_numbers}
There are several ways to construct the usual real numbers ($\mathbb{R}_{0,\boxtimes}$) from the usual rational numbers ($\mathbb{Q}_{0,\boxtimes}$).  Analogously, one can construct the reals $\mathbb{R}_{n,\boxtimes}$ from $\mathbb{Q}_{n,\boxtimes}$. Define the absolute value in $\mathbb{Q}_{n,\boxtimes}$ as
\begin{equation}\label{eq:absval}
|a|_{n,\boxtimes}
=\left\{\begin{array}{ll}T_{n,\boxtimes}\:a,&\text{if}\ a\stackrel{n}{<}0_{n,\boxtimes};\\
0_{n,\boxtimes},&\text{if}\ a=0_{n,\boxtimes};\\
a,&\text{if}\ a\stackrel{n}{>}0_{n,\boxtimes}.
\end{array}\right.
\end{equation}
In the method of Cauchy sequences, the set $\mathbb{R}_{n,\boxtimes}$ is the Cauchy completion of $\mathbb{Q}_{n,\boxtimes}$ with respect to the metric 
\begin{equation}
d_{n,\boxtimes}(a,b)=|a\oplus_{n,\boxtimes}T_{n,\boxtimes}\:b|_{n,\boxtimes}.
\end{equation}
Any number in $\mathbb{R}_{n,\boxtimes}$ is associated to the equivalence class of a Cauchy sequence whose elements belongs to $\mathbb{Q}_{n,\boxtimes}$.  The equivalence class, denoted $[\![\cdot]\!]_{n,\mathbb{R}}$, is that of Cauchy sequences whose distance in the metric $d_{n,\boxtimes}$ tends to the additive identity $0_{n,\boxtimes}$.  That is, if $a=(a_{0},a_{1},a_{2},\dotsc)$ and $b=(b_{0},b_{1},b_{2},\dotsc)$ are two Cauchy sequences in $\mathbb{Q}_{n,\boxtimes}$, then
\begin{equation}
[\![a]\!]_{n,\mathbb{R}}=[\![b]\!]_{n,\mathbb{R}}\iff\lim_{k\rightarrow\infty}d_{n,\boxtimes}(a_{k},b_{k})=0_{n,\boxtimes}.
\end{equation}
  Any function in $\mathbb{Q}_{n,\boxtimes}$ is naturally extended to $\mathbb{R}_{n,\boxtimes}$ by applying the former to every element of the representative Cauchy sequence.

\section{Embedding in the real rumbers}\label{sec:emb}
 The three characterisitc constructions previously studied: $(\mathord{\stackrel{n}{\mathbb{N}}},F_{n},F_{n+1})$, $(\mathbb{Z}_{n,\square},\oplus_{n,\square},\otimes_{n,\square})$ and $(\mathbb{Q}_{n,\square},\oplus_{n,\boxtimes},\otimes_{n,\boxtimes})$, are embedded in $\mathbb{R}_{0,\boxtimes}$ using Euler's real exponential function.  The resulting structures are denoted $(\mathbb{N}_{n},\bullet_{n},\bullet_{n+1})$, $(\mathbb{Z}_{n},\bullet_{n},\bullet_{n+1})$ and $(\mathbb{Q}_{n},\bullet_{n},\bullet_{n+1})$.  The section ends with remarks pointing to future research.
\begin{notation}
Denote $\exp_{\omega}(a)=\exp(a\ln(\omega))$, where $\exp(.)$ is Euler's real exponential function in $\mathbb{R}_{0,\boxtimes}$, $\ln(.)$ its inverse function and $\omega$ is the image of $w_{0}\in\mathord{\stackrel{0}{\mathbb{N}}}$ in $\mathbb{R}_{0,\boxtimes}$.
Denote by $\ln_{\omega}(.)$ the inverse function of $\exp_{\omega}(.)$.  Denote
\begin{equation}
\mathbb{R}_{n}=\exp_{\omega}^{n}(\mathbb{R}_{0}),\quad
\mathbb{Q}_{n}=\exp_{\omega}^{n}(\mathbb{Q}_{0}),\quad
\mathbb{Z}_{n}=\exp_{\omega}^{n}(\mathbb{Z}_{0})\quad
\text{and}\quad
\mathbb{N}_{n}=\exp_{\omega}^{n}(\mathbb{N}_{0}),
\end{equation}
where $\mathbb{R}_{0}=\mathbb{R}_{0,\boxtimes}$, $\mathbb{Q}_{0}=\mathbb{Q}_{0,\boxtimes}$, $\mathbb{Z}_{0}=\mathbb{Z}_{0,\boxtimes}$ and $\mathbb{N}_{0}=\mathbb{N}_{0,\boxtimes}$.
\end{notation}
\begin{remark}
Unlike the sets $\mathbb{Z}_{n}$ and $\mathbb{Q}_{n}$, the sets $\mathbb{N}_{n}$ and $\mathbb{R}_{n}$ share the property that $\mathbb{N}_{n}\subset\mathbb{N}_{n+1}$ and $\mathbb{R}_{n}\subset\mathbb{R}_{n+1}$.  So far it was assumed that $\omega\in\mathbb{N}_{0}\setminus\{0_{0},1_{0}\}$.  Note that if $\omega$ becomes a positive, noninteger real number, the property $\mathbb{N}_{n}\subset\mathbb{N}_{n+1}$ is no longer valid.
\end{remark}
\begin{remark}
Denote $\bullet_{0}$ and $\bullet_{1}$ the addition and multiplication in $\mathbb{R}_{0}$.  That is, if $a=(a_{0},a_{1},a_{2},\dotsc)$ and $b=(b_{0},b_{1},b_{2},\dotsc)$ are two Cauchy sequences in $\mathbb{Q}_{0}$, then $[\![a]\!]_{0,\mathbb{R}},[\![b]\!]_{0,\mathbb{R}}\in\mathbb{R}_{0}$ and
\begin{subequations}
\begin{align}
[\![a]\!]_{0,\mathbb{R}}\bullet_{0}[\![b]\!]_{0,\mathbb{R}}&=[\![(a_{0}\oplus_{0,\boxtimes}b_{0},a_{1}\oplus_{0,\boxtimes}b_{1},a_{2}\oplus_{0,\boxtimes}b_{2},\dotsc)]\!]_{0,\mathbb{R}},\\
[\![a]\!]_{0,\mathbb{R}}\bullet_{1}[\![b]\!]_{0,\mathbb{R}}&=[\![(a_{0}\otimes_{0,\boxtimes}b_{0},a_{1}\otimes_{0,\boxtimes}b_{1},a_{2}\otimes_{0,\boxtimes}b_{2},\dotsc)]\!]_{0,\mathbb{R}}.
\end{align}
\end{subequations}
%% CAREFUL:  [a]_{n,R} is an element of R_{n,\boxtimes}, not an
%% element of R_{n}.
\end{remark}
\begin{definition}\label{def:siberia}
For $n\geq 2$, define the binary operations
\begin{gather}
\bullet_{n}:(\mathbb{R}_{n-1})^{2}\rightarrow\mathbb{R}_{n-1}:\nonumber\\
a\bullet_{n}b=\exp_{\omega}(\ln_{\omega}(a)\bullet_{n-1}\ln_{\omega}(b)).
\end{gather}
%
%% CAREFUL: The operations \bullet_{n}^{\mathbb{R}} and
%% \bullet_{n}^{\mathbb{R}} are def in R_{n} not in R_{n,\boxtimes}.
%% For the latter you would have to def oper analogous to
%% \bullet_{0}^{\mathbb{R}} and \bullet_{0}^{\mathbb{R}}.
\end{definition} 
\begin{notation}
Denote $\mathbb{N}_{n}=\{0_{n},1_{n},2_{n},\dotsc\}$. %, that is $k_{n,\mathbb{R}}=\psi(\mathord{\stackrel{n}{k}})$. %% psi not yet def
\end{notation}
\begin{notation}
Denote by $T_{n}$ and $T_{n+1}$, the additive and multiplicative inverse in $(\mathbb{R}_{n},\bullet_{n},\bullet_{n+1})$.  Note that for all $n\in\mathbb{N}$, $T_{n+1}$ is defined in $\mathbb{R}_{n}\setminus\{0_{n}\}$ and $T_{0}$ is defined in $\mathbb{R}_{0}$.
\end{notation}
\begin{notation}
Denote
\begin{align}
U_{\mathbb{Z},\square}=\bigcup_{n\in\mathbb{N}}\mathbb{Z}_{n,\square},\quad
U_{\mathbb{Q},\boxtimes}=\bigcup_{n\in\mathbb{N}}\mathbb{Q}_{n,\boxtimes},\quad
U_{\mathbb{Z}}=\bigcup_{n\in\mathbb{N}}\mathbb{Z}_{n},\quad
U_{\mathbb{Q}}=\bigcup_{n\in\mathbb{N}}\mathbb{Q}_{n}.
\end{align}
\end{notation}
\begin{remark}
The isomorphism $\exp_{\omega}(.)$ is exponential as defined in the last paragraph of Section~\ref{sec:semirings}, since $\mathbb{R}_{n}\cap\mathbb{R}_{n+1}$ is a nonempty set where
\begin{equation}\label{eq:lebron}
{\rm Ad}_{\exp^{n+1}_{\omega}}(\bullet_{0})={\rm Ad}_{\exp^{n}_{\omega}}(\bullet_{1}).
\end{equation}
Similarly, the restrictions of $\exp_{\omega}(.)$ to $U_{\mathbb{Z}}$ and $U_{\mathbb{Q}}$ are exponential, since $\mathbb{Z}_{n}\cap\mathbb{Z}_{n+1}$ and $\mathbb{Q}_{n}\cap\mathbb{Q}_{n+1}$ are nonempty sets fulfilling Eq.~\eqref{eq:lebron}.  %The change of restrictions on $\omega$ affects these conclusions.
\end{remark}
\begin{definition}
Define the maps
\begin{subequations}
\begin{gather}
\psi:\mathord{\stackrel{0}{\mathbb{N}}}\rightarrow\mathbb{N}_{0}:\nonumber\\
\psi(k)=[\![(k_{0,\boxtimes},k_{0,\boxtimes},k_{0,\boxtimes},\dotsc)]\!]_{0,\boxtimes},\\
\psi_{\square}:U_{\mathbb{Z},\square}\rightarrow U_{\mathbb{Z}}:\nonumber\\
\psi_{\square}([\![(x,y)]\!]_{n,\square})
%=\exp_{\omega}^{n}(\ln_{\omega}^{n}(\psi(x))\bullet_{0}T_{0}\:\ln_{\omega}^{n}(\psi(y))),\\ %% same as below
=\psi(x)\bullet_{n}T_{n}\:\psi(y),\\
\psi_{\boxtimes}:U_{\mathbb{Q},\boxtimes}\rightarrow U_{\mathbb{Q}}:\nonumber\\
\psi_{\boxtimes}([\![(x,y)]\!]_{n,\boxtimes})
%=\exp_{\omega}^{n}(\ln_{\omega}^{n}(\psi_{\square}(x))\bullet_{1}T_{1}\:\ln_{\omega}^{n}(\psi_{\square}(y))).  %% same as below
=\psi_{\square}(x)\bullet_{n+1}T_{n+1}\:\psi_{\square}(y).
\end{gather}
\end{subequations}
\end{definition}
\begin{remark}
%% The restrict of a funct is denoted f|_{A}, where A is the subdomain.
The restriction $\psi\mathord{\restriction}_{\mathord{\stackrel{n}{\mathbb{N}}}}$ is a semiring isomorphism from $(\mathord{\stackrel{n}{\mathbb{N}}},F_{n},F_{n+1})$ to $(\mathbb{N}_{n},\bullet_{n},\bullet_{n+1})$, with $\omega=\psi(w_{0})$.  The restriction $\psi_{\square}\mathord{\restriction}_{\mathbb{Z}_{n,\square}}$
is an integral domain isomorphism from $(\mathbb{Z}_{n,\square},\oplus_{n,\square},\otimes_{n,\square})$ to $(\mathbb{Z}_{n},\bullet_{n},\bullet_{n+1})$.  The restriction $\psi_{\boxtimes}\mathord{\restriction}_{\mathbb{Q}_{n,\boxtimes}}$ is a field isomorphism from $(\mathbb{Q}_{n,\boxtimes},\oplus_{n,\boxtimes},\otimes_{n,\boxtimes})$ to $(\mathbb{Q}_{n},\bullet_{n},\bullet_{n+1})$.  Some equations of general interest:
\begin{itemize}
\item Let $a\in\mathord{\stackrel{n}{\mathbb{N}}}$, then
\begin{equation}
\exp_{\omega}(\psi(a))=\psi(E(a)).
\end{equation}
\item Let $a\in\mathbb{Z}_{n,\square}$, then
\begin{subequations}
\begin{align}
\exp_{\omega}(\psi_{\square}(a))&=\psi_{\square}(E_{\square}(a)),\\
T_{n}\:(\psi_{\square}(a))&=\psi_{\square}(T_{n,\square}\:a).
\end{align}
\end{subequations}
\item Let $a\in\mathbb{Q}_{n,\boxtimes}$, then
\begin{subequations}
\begin{align}
\exp_{\omega}(\psi_{\boxtimes}(a))&=\psi_{\boxtimes}(E_{\boxtimes}(a)),\\
T_{n}\:(\psi_{\boxtimes}(a))&=\psi_{\boxtimes}(T_{n,\boxtimes}\:a),\\
T_{n+1}\:(\psi_{\boxtimes}(a))&=\psi_{\boxtimes}(T_{n+1,\boxtimes}\:a).
\end{align}
\end{subequations}
\end{itemize}
\end{remark}
\begin{remark}
The functions $\psi_{\square}$ and $\psi_{\boxtimes}$ can be interpreted as equivalence classes in $U_{\mathbb{Z},\square}$ and $U_{\mathbb{Q},\boxtimes}$.  Some examples:
\begin{itemize}
\item For $w_{0}=\mathord{\stackrel{0}{2}}$, the numbers $16_{0,\square},4_{1,\square},2_{2,\square}\in U_{\mathbb{Z},\square}$ belong to the same class since 
\begin{equation}
\psi_{\square}(16_{0,\square})=\psi_{\square}(4_{1,\square})=\psi_{\square}(2_{2,\square}).
\end{equation}
%% Equiv rel: reflexivity, symmetry and transitivity.
\item For $w_{0}=\mathord{\stackrel{0}{4}}$, the numbers $2_{0,\boxtimes},T_{2,\boxtimes}\:2_{1,\boxtimes},T_{3,\boxtimes}\:T_{2,\boxtimes}\:2_{2,\boxtimes}\in U_{\mathbb{Q},\boxtimes}$ belong to the same class since 
\begin{equation}
\psi_{\boxtimes}(2_{0,\boxtimes})=\psi_{\boxtimes}(T_{2,\boxtimes}\:2_{1,\boxtimes})=\psi_{\boxtimes}(T_{3,\boxtimes}\:T_{2,\boxtimes}\:2_{2,\boxtimes}).
\end{equation}
\end{itemize}
A question left to the reader is whether it can find examples of elements $a\in\mathbb{Q}_{n,\boxtimes}$ and $b\in\mathbb{Q}_{m,\boxtimes}$ such that $\psi_{\boxtimes}(a)=\psi_{\boxtimes}(b)$, without any $c\in\mathbb{Q}_{r,\boxtimes}$ such that $\psi_{\boxtimes}(a)=\psi_{\boxtimes}(b)=\psi_{\boxtimes}(c)$, where $n<r<m$.
\end{remark}
\begin{remark}
A \textit{groupoid category} is a category in which every morphism is invertible.
This concept provides a general language where to study the former sequences of algebraic sets.  For example, the groupoid category with objects $\{(\mathbb{Q}_{n,\boxtimes},\oplus_{n,\boxtimes},\otimes_{n,\boxtimes})\}_{n\in\mathbb{N}}$ and morphisms
\begin{equation}
E_{\boxtimes}^{m}:\mathbb{Q}_{n,\boxtimes}\rightarrow\mathbb{Q}_{n+m,\boxtimes}\quad\text{and}\quad L_{\boxtimes}^{m}:\mathbb{Q}_{n+m,\boxtimes}\rightarrow\mathbb{Q}_{n,\boxtimes},
\end{equation}
for $n,m\in\mathbb{N}$, is isomorphic to the groupoid category with objects $\{(\mathbb{Q}_{n},\bullet_{n},\bullet_{n+1})\}_{n\in\mathbb{N}}$ and morphisms  
\begin{equation}
\exp_{\omega}^{m}:\mathbb{Q}_{n}\rightarrow\mathbb{Q}_{n+m}\quad\text{and}\quad \log_{\omega}^{m}:\mathbb{Q}_{n+m}\rightarrow\mathbb{Q}_{n}.
\end{equation}
The functor between their objects is a set of field isomorphisms.
\end{remark}
%%%%%%%%%%%%%%%%%%%%
%% Real numbers...
%%%%%%%%%%%%%%%%%%%%
%% \url{https://proofwiki.org/wiki/Existence_of_Field_of_Quotients}
%%\url{https://proofwiki.org/wiki/Construction_of_Inverse_Completion}
%%\url{https://proofwiki.org/wiki/Definition:Inverse_Completion}
%%\url{https://proofwiki.org/wiki/Inverse_Completion_of_Natural_Numbers}
%%\url{https://proofwiki.org/wiki/Definition:Integer}
\begin{remark} 
Denote $\mathbb{R}_{n}^{+}=\{x\in\mathbb{R}_{n}:x>0_{n}\}$.  The sequence of fields $\{(\mathbb{R}_{n},\bullet_{n},\bullet_{n+1})\}_{n\in\mathbb{N}}$ can be realized with the relaxed condition $\omega\in\mathbb{R}_{0}^{+}\setminus\{1_{0}\}$, opening new possibilities.  The so called \textit{infinite power tower} is the function
\begin{equation}\label{eq:inftower}
h(x)=\lim_{n\rightarrow\infty}\exp_{x}^{n}(x),
%\mathcal{H}_{4}(x,n).
\end{equation}  
converging within $\mathbb{R}_{0}$ in the real interval $\exp(T_{0}\:e)<x<\exp(T_{1}\:e)$, where $e$ is Euler's number. %% $e^{-e}<x<e^{1/e}$
%% The existence of the infinite power tower
%% can also be justified under integer \omega
%% by using transfinite numbers.
At $x=\omega$, one obtains $h(\omega)=\lim_{n\rightarrow\infty}0_{n}$, a fixed point of the isomorphisms between the fields $\{(\mathbb{R}_{n},\bullet_{n},\bullet_{n+1})\}_{n\in\mathbb{N}}$.
\end{remark}
\begin{remark}
Noting that $\mathbb{R}_{n+1}=\exp_{\omega}(\mathbb{R}_{n})$, it is easy to see that $\mathbb{R}_{n+1}=\mathbb{R}_{n}^{+}$.  
\end{remark}
\begin{remark}
The tuple
$(\mathbb{R}_{n+1}\cup\{0_{n}\},\bullet_{n},\bullet_{n+1})$ is a semiring. The case $n=0$ is the semiring of nonnegative reals, known as the \textit{probability semiring} in the context of weighted automata \cite{SA09}.  
%%\url{https://proofwiki.org/wiki/Definition:Complex_Number#:~:text=The%20complex%20numbers%20can%20be,a%20nicely%20normed%20%E2%88%97%2Dalgebra.&text=Then%20(a%2Cb)%E2%88%88,%C2%AFad%2Bcb)}
%%
The case $n=-1$ corresponds to the \textit{log semiring} \cite{DR09}, where $0_{-1}=\log_{\omega}(0_{0})$ and 
the operation $\bullet_{-1}$ is defined as
\begin{equation}\label{eq:harris}
a\bullet_{-1}b=\log_{\omega}(\exp_{\omega}(a)\bullet_{0}\exp_{\omega}(b))
\end{equation}
In the limit $\omega\rightarrow+\infty$, it becomes the \textit{max tropical semiring} where $a\bullet_{-1}b=\max\{a,b\}$.
\end{remark}
\begin{remark}
%% \infty
Denote the extended natural numbers as $\mathord{\stackrel{0}{\mathbb{N}}}\cup\{\infty\}$, and extend the domain of the binary operation $F_{0}$, such that $F_{0}(a,\infty)=\infty$, for all $a\in\mathord{\stackrel{0}{\mathbb{N}}}\cup\{\infty\}$.  Construct --via inverse completion-- the extended integer numbers as $\mathbb{Z}_{0,\square}\cup\{[\![(\mathord{\stackrel{0}{0}},\infty)]\!]_{0,\square},[\![(\infty,\mathord{\stackrel{0}{0}})]\!]_{0,\square}\}$, extending the domain of the binary operation $\oplus_{n,\square}$, such that
\begin{enumerate}
\item If $a\in\mathbb{Z}_{0,\square}\cup\{[\![(\mathord{\stackrel{0}{0}},\infty)]\!]_{0,\square}\}$, then $a\mathbin{\oplus_{n,\square}}[\![(\mathord{\stackrel{0}{0}},\infty)]\!]_{0,\square}=[\![(\mathord{\stackrel{0}{0}},\infty)]\!]_{0,\square}\mathbin{\oplus_{n,\square}}a=[\![(\mathord{\stackrel{0}{0}},\infty)]\!]_{0,\square}$.
\item If $a\in\mathbb{Z}_{0,\square}\cup\{[\![(\infty,\mathord{\stackrel{0}{0}})]\!]_{0,\square}\}$, then $a\mathbin{\oplus_{n,\square}}[\![(\infty,\mathord{\stackrel{0}{0}})]\!]_{0,\square}=[\![(\infty,\mathord{\stackrel{0}{0}})]\!]_{0,\square}\mathbin{\oplus_{n,\square}}a=[\![(\infty,\mathord{\stackrel{0}{0}})]\!]_{0,\square}$.
\end{enumerate}
There are no inverse elements for $[\![(\mathord{\stackrel{0}{0}},\infty)]\!]_{0,\square}$ and $[\![(\infty,\mathord{\stackrel{0}{0}})]\!]_{0,\square}$ because the expressions $[\![(\mathord{\stackrel{0}{0}},\infty)]\!]_{0,\square}\mathbin{\oplus_{n,\square}}[\![(\infty,\mathord{\stackrel{0}{0}})]\!]_{0,\square}$ and $[\![(\infty,\mathord{\stackrel{0}{0}})]\!]_{0,\square}\mathbin{\oplus_{n,\square}}[\![(\mathord{\stackrel{0}{0}},\infty)]\!]_{0,\square}$ are undefined.  The tuple $(\mathbb{Z}_{0,\square}\cup\{[\![(\mathord{\stackrel{0}{0}},\infty)]\!]_{0,\square},[\![(\infty,\mathord{\stackrel{0}{0}})]\!]_{0,\square}\},\oplus_{0,\square})$ is therefore a monoid, but not a group.  It contains the submonoids $(\{[\![(\mathord{\stackrel{0}{0}},\infty)]\!]_{0,\square}\}\cup T_{0,\square}\:\mathbb{N}_{0,\square},\oplus_{0,\square})$ and $(\mathbb{N}_{0,\square}\cup\{[\![(\infty,\mathord{\stackrel{0}{0}})]\!]_{0,\square}\},\oplus_{0,\square})$, related by an involution $\mathcal{T}_{0,\square}$, such that
\begin{enumerate}
\item If $a\in\mathbb{Z}_{0,\square}$, then $\mathcal{T}_{0,\square}\:a=T_{0,\square}\:a$.
\item $\mathcal{T}_{0,\square}\:[\![(\mathord{\stackrel{0}{0}},\infty)]\!]_{0,\square}=[\![(\infty,\mathord{\stackrel{0}{0}})]\!]_{0,\square}$.
\item $\mathcal{T}_{0,\square}\:[\![(\infty,\mathord{\stackrel{0}{0}})]\!]_{0,\square}=[\![(\mathord{\stackrel{0}{0}},\infty)]\!]_{0,\square}$.
\end{enumerate}  
%%\url{https://en.wikipedia.org/wiki/Exponentiation#Limits_of_powers}
%% Q: Does \infty has finite power tower rep given that it is a fixed point of E?
The change in notation from $T_{n,\square}$ to $\mathcal{T}_{0,\square}$ is to emphasize that the latter is no longer a group inverse operation. Extend the domain of the function $E$ from $\mathord{\stackrel{0}{\mathbb{N}}}$ to $\mathord{\stackrel{0}{\mathbb{N}}}\cup\{\infty\}$.  From the finite-base exponentiation of the \textit{extended real number line} one obtains \cite{BO07}
\begin{equation}\label{eq:baki1}
\exp_{\omega}(\psi(\infty))=\psi(\infty).
\end{equation}
Note that $\psi(\infty)$ fulfills the same property as $h(x)\!\restriction_{x=w_{0}}$, defined in Eq.~\eqref{eq:inftower}.  Since $\exp_{\omega}(\psi(\infty))=\psi(E(\infty))$, consistency requires
\begin{equation}
E(\infty)=\infty.
\end{equation}
Henceforth assumed.  A similar construction to the one above, leads to the monoid $(\mathbb{Z}_{n,\square}\cup\{[\![(\mathord{\stackrel{n}{0}},\infty)]\!]_{n,\square},[\![(\infty,\mathord{\stackrel{n}{0}})]\!]_{n,\square}\},\oplus_{n,\square})$.  It contains the submonoids $(\{[\![(\mathord{\stackrel{n}{0}},\infty)]\!]_{n,\square}\}\cup T_{n,\square}\:\mathbb{N}_{n,\square},\oplus_{n,\square})$ and $(\mathbb{N}_{n,\square}\cup\{[\![(\infty,\mathord{\stackrel{n}{0}})]\!]_{n,\square}\},\oplus_{n,\square})$, related by the involution $\mathcal{T}_{n,\square}$.  Extend the domain of the functions $E_{\square}$ and $\psi_{\square}$ from $U_{\mathbb{Z},\square}$ to $U_{\mathbb{Z},\square}\cup\{[\![(\mathord{\stackrel{n}{0}},\infty)]\!]_{n,\square}\}_{n\in\mathbb{N}}\cup\{[\![(\infty,\mathord{\stackrel{n}{0}})]\!]_{n,\square}\}_{n\in\mathbb{N}}$.  The Eq.~\eqref{eq:baki1} can also be written as
\begin{equation}\label{eq:baki2}
\exp_{\omega}(\psi_{\square}([\![(\infty,\mathord{\stackrel{0}{0}})]\!]_{0,\square}))=\psi_{\square}([\![(\infty,\mathord{\stackrel{0}{0}})]\!]_{0,\square}).
\end{equation}
Since
\begin{equation}
\exp_{\omega}(\psi_{\square}([\![(\infty,\mathord{\stackrel{0}{0}})]\!]_{0,\square}))=\psi_{\square}([\![(\infty,\mathord{\stackrel{1}{0}})]\!]_{1,\square}),
\end{equation}
the Eq.~\eqref{eq:baki2} becomes
\begin{equation}\label{eq:baki3}
\psi_{\square}([\![(\infty,\mathord{\stackrel{1}{0}})]\!]_{1,\square})=\psi_{\square}([\![(\infty,\mathord{\stackrel{0}{0}})]\!]_{0,\square}).
\end{equation}
Henceforth assumed.  Applying the function $\exp_{w_{0}}^{n}$ at both sides of Eq.~\eqref{eq:baki3} one obtains
\begin{equation}\label{eq:baki4}
\psi_{\square}([\![(\infty,\mathord{\stackrel{n+1}{0}})]\!]_{n+1,\square})=\psi_{\square}([\![(\infty,\mathord{\stackrel{n}{0}})]\!]_{n,\square}).
\end{equation}
From the algebraic properties of the extended real number line, one obtains
\begin{equation}\label{eq:enzo}
0_{0}=\psi_{\square}([\![(\mathord{\stackrel{0}{0}},\mathord{\stackrel{0}{0}})]\!]_{0,\square})=\psi_{\square}([\![(\mathord{\stackrel{1}{0}},\infty)]\!]_{1,\square}).
\end{equation}
Applying the function $\exp_{w_{0}}^{n}$ at both sides, one obtains
\begin{equation}
0_{n}=\psi_{\square}([\![(\mathord{\stackrel{n}{0}},\mathord{\stackrel{n}{0}})]\!]_{n,\square})=\psi_{\square}([\![(\mathord{\stackrel{n+1}{0}},\infty)]\!]_{n+1,\square}).
\end{equation}
And applying the function $\log_{w_{0}}$ at both sides of Eq.~\eqref{eq:enzo} one obtains
\begin{equation}
0_{-1}=\psi_{\square}([\![(\mathord{\stackrel{-1}{0}},\mathord{\stackrel{-1}{0}})]\!]_{-1,\square})=\psi_{\square}([\![(\mathord{\stackrel{0}{0}},\infty)]\!]_{0,\square}).
\end{equation}
This exposes the difference between $\{[\![(\infty,\mathord{\stackrel{n}{0}})]\!]_{n,\square}\}_{n\in\mathbb{N}}$ and $\{[\![(\mathord{\stackrel{n}{0}},\infty)]\!]_{n,\square}\}_{n\in\mathbb{N}}$.  
It also implies
\begin{equation}
\psi_{\square}(U_{\mathbb{Z},\square}\cup\{[\![(\mathord{\stackrel{n}{0}},\infty)]\!]_{n,\square}\}_{n\in\mathbb{N}}\cup\{[\![(\infty,\mathord{\stackrel{n}{0}})]\!]_{n,\square}\}_{n\in\mathbb{N}}
)=U_{\mathbb{Z}}\cup\{0_{-1},\psi(\infty)\}.
\end{equation}
In particular, note that
\begin{equation}
\psi_{\square}(\mathbb{Z}_{n,\square}\cup \{[\![(\mathord{\stackrel{n}{0}},\infty)]\!]_{n,\square},[\![(\infty,\mathord{\stackrel{n}{0}})]\!]_{n,\square}\})=\mathbb{Z}_{n}\cup\{0_{n-1},\psi(\infty)\}.
\end{equation}
Denote by $\psi_{n,\square}$, the restriction of $\psi_{\square}$ to the subdomain $\mathbb{Z}_{n,\square}\cup \{[\![(\mathord{\stackrel{n}{0}},\infty)]\!]_{n,\square},[\![(\infty,\mathord{\stackrel{n}{0}})]\!]_{n,\square}\}$.  Define the involution $\mathcal{T}_{n}$ in $\mathbb{Z}_{n}\cup\{0_{n-1},\psi(\infty)\}$, such that  
\begin{equation}
\mathcal{T}_{n}\:\psi_{\square}(a)=\psi_{\square}(\mathcal{T}_{n,\square}\:a).
\end{equation}
Extend the binary operation $\bullet_{n}$ to the domain $\mathbb{R}_{n}\cup\{0_{n-1},\psi(\infty)\}$, such that
\begin{enumerate}
\item If $a\in\mathbb{Z}_{n}\cup\{0_{n-1}\}$, then $a\bullet_{n}0_{n-1}=0_{n-1}\bullet_{n}a=0_{n-1}$.
\item If $a\in\mathbb{Z}_{n}\cup\{\psi(\infty)\}$, then $a\bullet_{n}\psi(\infty)=\psi(\infty)\bullet_{n}a=\psi(\infty)$.
\end{enumerate} 
The involution $\mathcal{T}_{n}$ is an isomorphism between the monoids $(\{0_{n-1}\}\cup\mathcal{T}_{n}\:\mathbb{N}_{n},\bullet_{n})$ and $(\mathbb{N}_{n}\cup\{\psi(\infty)\},\bullet_{n})$, in particular
\begin{equation}
\mathcal{T}_{n}\:\psi(\infty)=0_{n-1}.
\end{equation}
\end{remark}
%%%%%%%%%%%%%%%%%%%%%%%
\begin{remark}
%%\url{https://en.wikipedia.org/wiki/Completion_of_a_ring#Examples}
Recall the tuple $(\mathord{\stackrel{0}{\mathbb{N}}},F_{0},F_{1})$ is the semiring of natural numbers.  Consider its completion as a new semiring where the limit $\lim_{w_{0}\rightarrow\infty}E(\mathord{\stackrel{0}{\mathbb{N}}})$ is defined.  Denote the latter limit as $R_{1}$ and $R_{0}=\mathord{\stackrel{0}{\mathbb{N}}}$.  Denote the function $\mathcal{E}(.)=\lim_{w_{0}\rightarrow\infty}E(.)$ in $R_{0}$, with inverse $\mathcal{L}(.)$.  Note that $R_{0}\cap R_{1}=\{\mathcal{E}(0)=1\}$.  Denote $R_{0}^{\ast}=R_{0}\setminus\{0\}$ and $R_{1}^{\ast}\setminus\{\mathcal{E}(0)\}$.  Let $a,b\in R_{0}$ and assume
\begin{equation}
\mathcal{E}(a)\leq\mathcal{E}(b)\iff a\leq b.
\end{equation}
This contrasts with the usual extended natural numbers, where there is only one nonfinite number, which remains invariant when elevated to finite exponents \cite{BO07}.  Define the operations $\mathcal{F}_{0}$ and $\mathcal{F}_{1}$, such that
\begin{enumerate}
\item If $a,b\in R_{0}$, then $\mathcal{F}_{0}(a,b)=F_{0}(a,b)$.
\item If $a,b\in R_{1}^{\ast}$, then $\mathcal{F}_{0}(a,b)
%=\lim_{w_{0}\rightarrow\infty}F_{0}(a,b)
=\text{max}(a,b)$.
\item If $a\in R_{0}$ and $b\in R_{1}^{\ast}$, then $\mathcal{F}_{0}(a,b)
%=\lim_{w_{0}\rightarrow\infty}F_{0}(a,b)
=b$.  %% =H_{1}(a,b)
\item If $a,b\in R_{0}$, then $\mathcal{F}_{1}(a,b)=F_{1}(a,b)$.
\item If $a,b\in R_{1}^{\ast}$, then $\mathcal{F}_{1}(a,b)
%=\lim_{w_{0}\rightarrow\infty}F_{1}(a,b)
=\mathcal{E}(\mathcal{F}_{0}(\mathcal{L}(a),\mathcal{L}(b))$.
\item If $a\in R_{0}$ and $b\in R_{1}^{\ast}$, then $\mathcal{F}_{1}(a,b)
%=\lim_{w_{0}\rightarrow\infty}F_{1}(a,b)
=b$. %%=H_{2}(a,\mathcal{E}(b))
\end{enumerate}
The tuple $(R_{0}\cup R_{1},\mathcal{F}_{0},\mathcal{F}_{1})$ is a semiring. 
%% It is useful to check semiring properties 
%% e.g. distributiviy, that is, to prove that
%% F_{1}(a,F_{0}(b,c)) gives the same
%% result as F_{0}(F_{1}(a,b),F_{1}(a,c)), for all cases of a,b,c.
%% 
Now extend recursively the latter semiring such that the limit $\lim_{w_{0}\rightarrow\infty}E(\mathord{\stackrel{n}{\mathbb{N}}})$ is defined for $n\in\mathbb{N}$ and denote each limit as $R_{n}$.  Consequently, extend the domain of $\mathcal{E}(.)$ to $\bigcup_{n\in\mathbb{N}}R_{n}$.  Note that $R_{n}\cap R_{n+1}=\{\mathcal{E}^{n+1}(0)=\mathcal{E}^{n}(1)\}$.  Denote $R_{n}^{\ast}=R_{n}\setminus\{\mathcal{E}^{n}(0)\}$.  Let $a,b\in R_{n}$ and assume
\begin{equation}
\mathcal{E}(a)=\mathcal{E}(b)\iff a=b.
\end{equation}
Define the operations $\mathcal{F}_{n}$ and $\mathcal{F}_{n+1}$ for $n\in\mathbb{N}\setminus\{0\}$, such that
\begin{enumerate}
\item If $a,b\in R_{n}$, then $\mathcal{F}_{n}(a,b)=\mathcal{E}(\mathcal{F}_{n-1}(\mathcal{L}(a),\mathcal{L}(b)))$.
\item If $a,b\in R_{n+1}^{\ast}$, then $\mathcal{F}_{n}(a,b)%=\lim_{w_{0}\rightarrow\infty}F_{n}(a,b)
=\text{max}(a,b)$.
\item If $a\in R_{n}$ and $b\in R_{n+1}^{\ast}$, then $\mathcal{F}_{n}(a,b)%=\lim_{w_{0}\rightarrow\infty}F_{n}(a,b)
=b$.
\item If $a,b\in R_{n}$, then $\mathcal{F}_{n+1}(a,b)%=\lim_{w_{0}\rightarrow\infty}F_{n+1}(a,b)$
=\mathcal{E}(\mathcal{F}_{n}(\mathcal{L}(a),\mathcal{L}(b)))$.
\item If $a,b\in R_{n+1}^{\ast}$, then $\mathcal{F}_{n+1}(a,b)%=\lim_{w_{0}\rightarrow\infty}F_{n+1}(E(a),E(b))
=\mathcal{E}(\mathcal{F}_{n}(\mathcal{L}(a),\mathcal{L}(b)))$.
\item If $a\in R_{n}$ and $b\in R_{n+1}^{\ast}$, then $\mathcal{F}_{n+1}(a,b)%=\lim_{w_{0}\rightarrow\infty}F_{n+1}(a,b)
=b$. %%=H_{2}(a,b)
\end{enumerate}
The tuple $(R_{n}\cup R_{n+1},\mathcal{F}_{n},\mathcal{F}_{n+1})$ is a semiring.  The subset $R_{n+1}^{\ast}$ is an ideal, as it fulfills the properties \cite{GO99}:
\begin{enumerate}
\item The tuple $(R_{n+1}^{\ast},\mathcal{F}_{n})$ is a semigroup.
\item If $a\in R_{n}\cup R_{n+1}$ and $b\in R_{n+1}^{\ast}$, then $\mathcal{F}_{n+1}(a,b)\in I$.
\item $R_{n+1}^{\ast}\subset R_{n}\cup R_{n+1}$.
\end{enumerate}
%The reader is invited to use inverse completion to study the corresponding sequences of extended integers and extended real numbers.
\end{remark}

\medskip


\begin{thebibliography}{20}

 \bibitem{BE15}
 Bennett, A. A.
 ``Note on an Operation of the Third Grade.''
 \textit{Ann. of Math.}, Second Series, 17, no.~2 (1915): 74--75.
 \verb|doi:10.2307/2007124.|

%% Hilbert cites Ackermann for this construction, before the latter
%% publishes his own report.
% \bibitem{HI26}
% Hilbert, D.
% ``\"Uber das Unendliche.''
% \textit{Math. Ann.} 95 (1926): 161--190.
 
 \bibitem{ACK28}
 Ackermann, W.
 ``Zum Hilbertschen Aufbau der reellen Zahlen.''
 \textit{Math. Ann.} 99 (1928): 118--133.
 \verb|doi:10.1007/BF01459088.|

 \bibitem{GO47}
 Goodstein, R. L.
 ``Transfinite ordinals in recursive number theory.''
 \textit{J. Symb. Log.} 12, no.~4 (1947): 123--129.
 \verb|doi:10.2307/2266486.|
 %% Journal of Symbolic Logic
 
 \bibitem{TAR69}
 Donner, J., and A. Tarski.
 ``An extended arithmetic of ordinal numbers.''
 \textit{Fund. Math.} 65 (1969): 95--127.
 \verb|doi:10.4064/fm-65-1-95-127.|
 %% Fundamenta Mathematicae
 %% Fundam. Math. (article's website)
 %% Fund. Math. (Mathematical Reviews / IMRN )
 
  \bibitem{SA07}
 Salomon, D.
 \textit{Variable-Length Codes for Data Compression}, 1st ed.
 London: Springer-Verlag, 2007.
 \verb|doi:10.1007/978-1-84628-959-0.|
 
  \bibitem{LI18}
 Lindstrom, P., S. Lloyd and J. Hittinger.
 ``Universal coding of the reals: Alternatives to IEEE floating point.''
 \textit{Conference for the Next Generation Arithmetic (ConNGA'18).  Association for Computing Machinery, New York, NY, USA} 5 (2018): 1--14.
 \verb|doi:10.1145/3190339.3190344.|
 
 \bibitem{EL75} 
 Elias, P.
 ``Universal codewords sets and representations of the integers.''
 \textit{IEEE Trans. Inform. Theory} 21, no.~2 (1975): 194--203.
 \verb|doi: 10.1109/TIT.1975.1055349.|
 %% IEEE Transactions on Information Theory
 %% IEEE Trans. Inf. Theory (article's website)
 %% IEEE Trans. Inform. Theory (Mathematical Reviews / IMRN)
 
 \bibitem{CLE84}
 Clenshaw, C. W., and F. W. J. Olver.
 ``Beyond floating point.''
 \textit{J. ACM} 31, no.~2 (1984): 319--328.
 \verb|doi:10.1145/62.322429.|
 %% Journal of the ACM
 
 \bibitem{CLE88}
 Clenshaw, C. W., and P. R. Turner.
 ``The Symmetric Level-Index System.''
 \textit{IMA J. Numer. Anal.} 8, no.~4 (1988): 517--526.
 \verb|doi:10.1093/imanum/8.4.517.|
 %% IMA Journal of Numerical Analysis

 \bibitem{SA09}
 Sakarovitch, J.
 \textit{Elements of Automata Theory}, 1st ed.
 Cambridge: Cambridge University Press, 2009.
 \verb|doi:10.1017/CBO9781139195218.|

 \bibitem{DR09}
 Manfred, D., K. Werner and V. Heiko.
 \textit{Handbook of Weighted Automata.}
 Monographs in Theoretical Computer Science.  An EATCS Series.
 Berlin Heidelberg: Springer-Verlag, 2009.
 \verb|doi:10.1007/978-3-642-01492-5.|

%ALT SOURCE: Mohri M. (2009) Weighted Automata Algorithms. In: Droste M., Kuich W., Vogler H. (eds) Handbook of Weighted Automata. Monographs in Theoretical Computer Science. An EATCS Series. Springer, Berlin, Heidelberg.

 \bibitem{WA65}
 Warner, S.
 \textit{Modern Algebra.}
 Dover Publications Inc., 1965.

 \bibitem{BO07}
 Bourbaki, N.
 \textit{Topologie générale. Chapitres 1 à 4.}  Berlin Heidelberg: Springer-Verlag, 2007. 
 \verb|doi:10.1007/978-3-540-33982-3|.

 \bibitem{GO99}
 Golan, J. S.
 \textit{Ideals in Semirings.}  In: Semrings and their Applications.
 Dordrecht: Springer, 1999.
 \verb|doi:10.1007/978-94-015-9333-5_6|.
 
\end{thebibliography}
\end{document}